\titleformat*{\section}{\normalsize\bfseries}
\def\R{\mathbb{R}}
\def\e{{\varepsilon}}
\def\p{\partial}
\newtheorem{thm}{Theorem}[section]
\newtheorem{lem}[thm]{Lemma}
\newtheorem{cor}[thm]{Corollary}
\newtheorem{prop}[thm]{Proposition}
\newtheorem{rem}[thm]{Remark}
\begin{document}

\title{\large{\bf{Asymptotic Behavior of Solutions to the Generalized KdV-Burgers Equation with Slowly Decaying Data}}}
\author{Ikki Fukuda\thanks{Department of Mathematics, Hokkaido University, Sapporo, Hokkaido, 060-0810, Japan.}}
\date{}

\maketitle

\footnote[0]{E-mail: i.fukuda@math.sci.hokudai.ac.jp}

\begin{abstract}
We consider the asymptotic behavior of the global solutions to the initial value problem for the generalized KdV-Burgers equation. It is known that the solution to this problem converges to a self-similar solution to the Burgers equation called a nonlinear diffusion wave. In this paper, we derive the optimal asymptotic rate to the nonlinear diffusion wave when the initial data decays slowly at spatial infinity. In particular, we investigate that how the change of the decay rate of the initial value affects the asymptotic rate to the nonlinear diffusion wave.
\end{abstract}

\smallskip
{\bf Keywords:} generalized KdV-Burgers equation, asymptotic behavior, second \\
\indent asymptotic profile, slowly decaying data. \\[.3em]
\indent
{\bf 2010 MSC:} 35B40, 35Q53.

\section{Introduction}
We study the asymptotic behavior of global solutions to the following generalized KdV-Burgers equation:
\begin{align}\label{1-1}
\begin{split}
u_{t}+(f(u))_{x}+ku_{xxx}&=u_{xx}  , \ \ t>0,\ \ x\in \R, \\
u(x, 0)&=u_{0}(x) , \ \ x\in \R, 
\end{split}
\end{align}
where $f(u)=(b/2)u^{2}+(c/3)u^{3}$ and $b\neq 0$, $c, k\in\R$. The subscripts $t$ and $x$ denote the partial derivatives with respect to $t$ and $x$, respectively. In the present paper, we consider the initial value problem \eqref{1-1} under the following condition: 
\begin{equation}\label{1-2}
\exists \alpha>1, \ \ \exists C>0 \ \ s.t. \ \ |u_{0}(x)|\le C(1+|x|)^{-\alpha}, \ \ x\in \R.  
\end{equation}
The purpose of our study is to obtain an asymptotic profile of the solution $u(x, t)$ and to examine the optimality of its asymptotic rate. Especially, we investigate how the change of the decay rate of the initial value affects its asymptotic rate. 

Concerning this problem, there are many results about the asymptotic behavior of the solutions to \eqref{1-1} when the initial data decays fast. When $k=0$, \eqref{1-1} becomes the generalized Burgers equation:
\begin{align}\label{1-3}
\begin{split}
u_{t}+(f(u))_{x}&=u_{xx}  , \ \ t>0, \ \ x\in \R, \\
u(x, 0)&=u_{0}(x) , \ \ x\in \R. 
\end{split}
\end{align}
It is known that the solution of \eqref{1-3} converges to a nonlinear diffusion wave defined by
\begin{equation}\label{1-4}
\chi(x, t)\equiv \frac{1}{\sqrt{1+t}} \chi_{*} \biggl(\frac{x}{\sqrt{1+t}}\biggl), \ \ t\ge0, \ \ x\in \R, 
\end{equation}
where
\begin{equation}\label{1-5}
\chi_{*}(x)\equiv \frac{1}{b}\frac{(e^{b\delta/2}-1)e^{-x^{2}/4}}{\sqrt{\pi}+(e^{b\delta/2}-1)\int_{x/2}^{\infty}e^{-y^{2}}dy}, \ \ \delta \equiv \int_{\R}u_{0}(x)dx
\end{equation}
(cf.~\cite{Kawashima, Liu, Matsumura and Nishihara}). Note that $\chi(x, t)$ is a solution of the standard Burgers equation:
\begin{equation}\label{1-6}
\chi_{t}+\left(\frac{b}{2}\chi^{2}\right)_{x}=\chi_{xx}, \ \ \int_{\R}\chi(x, 0)dx=\delta.  
\end{equation}
Concerning the convergence rate to the nonlinear diffusion wave $\chi(x, t)$, it was shown in Matsumura and Nishihara~\cite{Matsumura and Nishihara} that if we set 
 \begin{equation}\label{1-7}
w_{0}(x)\equiv \exp\biggl(-\frac{b}{2}\int_{-\infty}^{x}u_{0}(y)dy\biggl)-\exp\biggl(-\frac{b}{2}\int_{-\infty}^{x}\chi_{*}(y)dy\biggl)
\end{equation}
and assume that $w_{0}\in H^{2}(\R) \cap L^{1}(\R)$ and $\|w_{0}\|_{H^{2}}+\|w_{0}\|_{L^{1}}+\|u_{0}\|_{L^{1}}$ is sufficiently small, then the following estimate holds: 
\begin{equation}\label{1-8}
\|u(\cdot, t)-\chi(\cdot, t) \|_{L^{\infty}} \le C(1+t)^{-1}\log(2+t), \ \ t\ge0.
\end{equation}
Moreover, if $u_{0} \in L_{1}^{1}(\R) \cap H^{1}(\R)$ and $\|u_{0}\|_{L_{1}^{1}}+\|u_{0}\|_{H^{1}}$ is sufficiently small, then the optimality of the asymptotic rate to the nonlinear diffusion wave obtained in \eqref{1-8} was examined by Kato~\cite{Kato}. Here $L_{\beta}^{1}(\R)$ is a subset of $L^{1}(\R)$ whose elements satisfy $\|u_{0}\|_{L_{\beta}^{1}} \equiv \int_{\R}(1+|x|)^{\beta}|u_{0}(x)|dx <\infty$. Indeed, by constructing the second asymptotic profile which is the leading term of $u-\chi$, Kato~\cite{Kato} showed that if $\delta\neq0$ and $c\neq0$, then the asymptotic rate to the nonlinear diffusion wave $\chi(x, t)$ given by \eqref{1-8} is optimal with respect to the time decaying order. Here we note that $w_{0}\in H^{2}(\R) \cap L^{1}(\R)$ and $u_{0}\in L^{1}_{1}(\R) \cap H^{1}(\R)$ are realized when $u_{0}\in H^{1}(\R)$ and $\alpha>2$ in \eqref{1-2} (we can check these facts by a direct calculation. For details, see \eqref{3-8} below).  

The case of $b=k=1$ and $c=0$ (the KdV-Burgers equation) is also studied by many authors (cf.~\cite{Amick, Hayashi and Naumkin, Kaikina and Paredes, Karch 1997, Karch 1999}): 
\begin{align}\label{1-9}
\begin{split}
u_{t}+uu_{x}+u_{xxx}&=u_{xx}  , \ \ t>0, \ \ x\in \R, \\
u(x, 0)&=u_{0}(x) , \ \ x\in \R.
\end{split}
\end{align}
In particular, it was shown in Kaikina and Ruiz-Paredes~\cite{Kaikina and Paredes} that if $u_{0} \in L_{1}^{1}(\R) \cap H^{s}(\R)$ with $s>-1/2$, then the solution of \eqref{1-9} tends to the nonlinear diffusion wave $\chi(x, t)$ at the rate of $t^{-1}\log t$ and this rate is optimal. 

For the general case, in~\cite{Fukuda} the author considered \eqref{1-1} for all $b, c, k \in \R$ with $b\neq0$ and obtained a result about the asymptotic behavior of the solution of \eqref{1-1}, which unifies the results due to Kato~\cite{Kato} and Kaikina and Ruiz-Paredes~\cite{Kaikina and Paredes}. More precisely, we have the following estimate: If $u_{0}\in L^{1}_{1}(\R)\cap H^{3}(\R)$ and $\|u_{0}\|_{L^{1}_{1}}+\|u_{0}\|_{H^{3}}$ is sufficiently small, then the solution to \eqref{1-1} satisfies 
\begin{equation}\label{1-10}
\|u(\cdot, t)-\chi(\cdot, t)-V(\cdot, t)\|_{L^{\infty}} \le C(1+t)^{-1}, \ \ t\ge1,
\end{equation}
where
\begin{equation}\label{1-11}
V(x, t)\equiv -\frac{d}{4\sqrt{\pi}}\biggl(\frac{b^{2}k}{8}+\frac{c}{3}\biggl)V_{*}\biggl(\frac{x}{\sqrt{1+t}}\biggl)(1+t)^{-1}\log(1+t), 
\end{equation}
\begin{align}
V_{*}(x)&\equiv(b\chi_{*}(x)-x)e^{-x^{2}/4}\eta_{*}(x)=2\frac{d}{dx}(\eta_{*}(x)e^{-x^{2}/4}),  \label{1-12}\\
\eta_{*}(x)&\equiv \exp \biggl(\frac{b}{2}\int_{-\infty}^{x}\chi_{*}(y)dy\biggl), \ \ d\equiv \int_{\R}(\eta_{*}(y))^{-1}(\chi_{*}(y))^{3}dy. \label{1-13}
\end{align}

\noindent
From \eqref{1-10}, the triangle inequality and \eqref{1-11}, if $\delta \neq0$ and $(b^{2}k)/8+c/3\neq0$, we see that the solution $u(x, t)$ to \eqref{1-1} tends to the nonlinear diffusion wave $\chi(x, t)$ at the rate of $t^{-1}\log t$. Actually, the estimate
\begin{equation}\label{1-14}
C^{-1}(1+t)^{-1}\log(1+t)\le \|u(\cdot, t)-\chi(\cdot, t)\|_{L^{\infty}} \le C(1+t)^{-1}\log(1+t)
\end{equation}
holds for sufficiently large $t$. Therefore, this asymptotic rate $t^{-1}\log t$ is optimal with respect to the time decaying order. On the other hand, if $(b^{2}k)/8+c/3=0$, then we find the asymptotic rate to the nonlinear diffusion wave is $t^{-1}$, because $V(x, t)$ vanishes identically.
\\

All of the above previous results are corresponding to the case where the decay rate of the initial data $u_{0}$ is rapid, i.e., $\alpha>2$ in \eqref{1-2}. In the present paper, we are interested in the asymptotic behavior of the solution to \eqref{1-1} in the case that the initial data decays more slowly. For the  the semilinear heat equation, Narazaki and Nishihara~\cite{Narazaki and Nishihara} studied the asymptotic behavior of the solution for the slowly decaying data in the supercritical case. Actually, they studied the following initial value problem: 
\begin{align}\label{1-15}
\begin{split}
u_{t}-u_{xx}&=|u|^{p-1}u, \ t>0, \ x\in \R, \\
u(x, 0)&=u_{0}(x), \ x\in \R, 
\end{split}
\end{align}
where the data $u_{0}\in C^{0}(\R)$ satisfies $|u_{0}(x)|\le C(1+|x|)^{-\gamma}$ for $0<\gamma \le1$. In this case, if $p>1+2/\gamma$ (supercritical case) and the data is sufficiently small, then the asymptotic profile is given by 
\begin{equation*}
\Psi(x, t)=c_{\gamma}\int_{\R}\frac{1}{\sqrt{4\pi t}}e^{-(x-y)^{2}/4t}(1+|y|)^{-\gamma}dy, 
\end{equation*}
provided that the data satisfies $\lim_{|x|\to \infty}(1+|x|)^{\gamma}u_{0}(x)=c_{\gamma}$. Furthermore, in~\cite{Narazaki and Nishihara}, the semilinear damped wave equation 
\begin{align*}
&u_{tt}-u_{xx}+u_{t} =|u|^{p-1}, \ t>0, \ x\in \R, \\
&u(x, 0)=u_{0}(x), \ u_{t}(x, 0)=u_{1}(x), \ x\in \R, 
\end{align*}
with slowly decaying data is also treated. However, there are no results about the asymptotic behavior for the generalized KdV-Burgers equation with slowly decaying data up to the author's knowledge. Because of this, we would like to study the asymptotic profile for the solution to \eqref{1-1} when $1<\alpha \le2$ in \eqref{1-2}, which corresponds to the case of $0<\gamma \le1$. Indeed, it is natural to assume that $u_{0}\in L^{1}(\R)$ which is indicated by \eqref{1-2}, because the equation \eqref{1-1} leads to the conservation law: $\int_{\R}u(x, t)dx=\int_{\R}u_{0}(x)dx$. For this reason, we restrict our attention to the case of $1<\alpha \le2$. \\

In the following, we consider the initial value problem \eqref{1-1} with condition \eqref{1-2} when $1<\alpha \le2$. Our main results are as follows:
\newpage
\begin{thm}\label{main1}
Assume the condition \eqref{1-2} with $1<\alpha \le2$. Let $u_{0}\in H^{3}(\R)$ and $\|u_{0}\|_{L^{1}}+\|u_{0}\|_{H^{3}}$ is sufficiently small. Then \eqref{1-1} has a unique global solution $u(x, t) \in C^{0}([0, \infty); H^{3})$. Moreover, the solution satisfies the following estimate: 
\begin{align}\label{1-16}
\|u(\cdot, t)-\chi(\cdot, t)\|_{L^{\infty}}\le C\begin{cases}
(1+t)^{-\alpha/2}, &t\ge0, \ 1<\alpha<2,\\
(1+t)^{-1}\log(2+t), &t\ge0, \ \alpha=2,
\end{cases}
\end{align}
where $\chi(x, t)$ is defined by \eqref{1-4}.
\end{thm}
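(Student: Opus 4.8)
My plan is to reduce \eqref{1-1} to a heat equation through a Cole--Hopf type transformation and then to exploit sharp linear heat estimates adapted to non-integrable data. \textbf{Global existence.} First I would construct the global solution $u\in C^{0}([0,\infty);H^{3})$ by the standard energy method: local existence in $H^{3}$ via the contraction mapping principle for the integral equation, together with a priori estimates obtained from $L^{2}$ energy identities for $u,\p_{x}u,\p_{x}^{2}u,\p_{x}^{3}u$, where the dissipation $u_{xx}$ controls the nonlinear and dispersive terms under the smallness of $\|u_{0}\|_{L^{1}}+\|u_{0}\|_{H^{3}}$. Along the way I would record the decay estimates $\|u(\cdot,t)\|_{L^{\infty}}\le C(1+t)^{-1/2}$ and $\|\p_{x}^{j}u(\cdot,t)\|_{L^{1}}\le C(1+t)^{-j/2}$, which follow from $u_{0}\in L^{1}(\R)$ and the heat semigroup together with the a priori bounds; these are needed to control the source terms below.

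\textbf{Transformation.} Next I set $\Phi(x,t)=\exp(-\tfrac{b}{2}\int_{-\infty}^{x}u(y,t)\,dy)$ and, with $\chi$ from \eqref{1-4}, $\Phi_{\chi}(x,t)=\exp(-\tfrac{b}{2}\int_{-\infty}^{x}\chi(y,t)\,dy)$. A direct computation using \eqref{1-1} shows
\[
\Phi_{t}-\Phi_{xx}=\frac{b}{2}\,\Phi\Bigl(\frac{c}{3}u^{3}+k\,u_{xx}\Bigr),
\]
while $\Phi_{\chi}$ solves the pure heat equation because $\chi$ solves \eqref{1-6}. Hence $W:=\Phi-\Phi_{\chi}$ satisfies $W_{t}-W_{xx}=\tfrac{b}{2}\Phi(\tfrac{c}{3}u^{3}+k\,u_{xx})$ with $W(\cdot,0)=w_{0}$, the function in \eqref{1-7}. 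Moreover, writing $u=-\tfrac{2}{b}\,\Phi_{x}/\Phi$ and subtracting the analogous expression for $\chi$, one obtains the pointwise identity
\[
u-\chi=-\frac{2}{b}\,\frac{W_{x}\,\Phi_{\chi}-(\Phi_{\chi})_{x}\,W}{\Phi\,\Phi_{\chi}}.
\]
Since $\Phi,\Phi_{\chi}$ are bounded above and below and $(\Phi_{\chi})_{x}=-\tfrac{b}{2}\chi\,\Phi_{\chi}$, this yields $\|u-\chi\|_{L^{\infty}}\le C\bigl(\|W_{x}\|_{L^{\infty}}+\|\chi\|_{L^{\infty}}\|W\|_{L^{\infty}}\bigr)$, so everything reduces to decay estimates for $W$ and $W_{x}$.

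\textbf{Linear part and the role of slow decay.} The new feature is the decay of $w_{0}$. Since $\int_{\R}u_{0}=\int_{\R}\chi_{*}=\delta$, the tails of $\int_{-\infty}^{x}u_{0}$ decay like $|x|^{1-\alpha}$ while those of $\chi_{*}$ in \eqref{1-5} decay like a Gaussian; expanding the exponentials in \eqref{1-7} then gives $|w_{0}(x)|\le C(1+|x|)^{-(\alpha-1)}$. For $1<\alpha\le2$ this profile is \emph{not} integrable, which is exactly why the classical argument requiring $w_{0}\in L^{1}(\R)$ fails and a separate analysis is needed. The heart of the proof is therefore a linear heat lemma: if $|g(x)|\le C(1+|x|)^{-\beta}$ with $0<\beta\le1$, then splitting the convolution $e^{t\p_{x}^{2}}g=G_{t}*g$ (here $G_{t}$ is the heat kernel) into the regions $|y|\lesssim\sqrt{t}$ and $|y|\gtrsim\sqrt{t}$ gives $\|e^{t\p_{x}^{2}}g\|_{L^{\infty}}\le C(1+t)^{-\beta/2}$ and $\|\p_{x}e^{t\p_{x}^{2}}g\|_{L^{\infty}}\le C(1+t)^{-(\beta+1)/2}$, with an extra factor $\log(2+t)$ when $\beta=1$. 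Applying this with $\beta=\alpha-1$ gives $\|\p_{x}e^{t\p_{x}^{2}}w_{0}\|_{L^{\infty}}\le C(1+t)^{-\alpha/2}$ (and the logarithm precisely at $\alpha=2$), together with $\|e^{t\p_{x}^{2}}w_{0}\|_{L^{\infty}}\le C(1+t)^{-(\alpha-1)/2}$, so that $\|\chi\|_{L^{\infty}}\|e^{t\p_{x}^{2}}w_{0}\|_{L^{\infty}}\le C(1+t)^{-\alpha/2}$ as well.

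\textbf{Source terms and conclusion.} Writing $W(t)=e^{t\p_{x}^{2}}w_{0}+\int_{0}^{t}e^{(t-s)\p_{x}^{2}}F(s)\,ds$ with $F=\tfrac{b}{2}\Phi(\tfrac{c}{3}u^{3}+k\,u_{xx})$, I would estimate the Duhamel integral using the decay estimates for $u$ from the first step, namely $\|F(s)\|_{L^{1}}\le C(1+s)^{-1}$, together with the $L^{1}$--$L^{\infty}$ and $L^{\infty}$--$L^{\infty}$ smoothing of $\p_{x}e^{\tau\p_{x}^{2}}$ after splitting $\int_{0}^{t}$ at $s=t/2$. This is the same computation that produces the profile $V$ in \eqref{1-11}, and it yields a contribution of order $(1+t)^{-1}\log(2+t)$ to both $\|W_{x}\|_{L^{\infty}}$ and $\|\chi\|_{L^{\infty}}\|W\|_{L^{\infty}}$. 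Collecting the linear and source contributions through the identity of the second step gives $\|u-\chi\|_{L^{\infty}}\le C(1+t)^{-\alpha/2}+C(1+t)^{-1}\log(2+t)$, which is \eqref{1-16}: for $1<\alpha<2$ the slowly decaying linear term $(1+t)^{-\alpha/2}$ dominates, while at $\alpha=2$ the two rates coincide at $(1+t)^{-1}\log(2+t)$. The main obstacle I expect is the linear heat lemma in the non-integrable range $0<\beta\le1$ and, above all, the borderline case $\beta=1$ ($\alpha=2$), where the near-field contribution produces the logarithm and must be matched exactly against the source term; a secondary difficulty is that the quadratic Burgers nonlinearity cannot be treated perturbatively by a direct Duhamel estimate of $u-\chi$ (it is not subdominant), which is precisely what forces the Cole--Hopf reduction.
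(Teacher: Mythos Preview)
Your proposal is essentially the paper's own proof. The paper carries out exactly the Cole--Hopf reduction you describe (with your $\Phi,\Phi_{\chi},W$ written as $\rho,\eta^{-1},w$), derives the same pointwise bound $|w_{0}(x)|\le C(1+|x|)^{-(\alpha-1)}$, and proves the linear heat lemma by the same near-field/far-field splitting at $|y|\sim\sqrt{1+t}$ (this is \eqref{3-11}); the Duhamel source terms are handled as you indicate, with one integration by parts on the $k\,u_{xx}$ piece over $[0,t/2]$ to trade $\|u_{xx}(\cdot,\tau)\|_{L^{1}}$ (which is not uniformly bounded near $\tau=0$) for $\|u_{x}(\cdot,\tau)\|_{L^{1}}$. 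The only cosmetic difference is that the paper obtains global existence and the decay estimates for $u$ by a contraction-mapping argument in a time-weighted $H^{s}$ space (Proposition~\ref{GE}) rather than by an energy method.
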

Furthermore, we examine the optimality of the asymptotic rate given in \eqref{1-16}. Actually, we have constructed the second asymptotic profile corresponding to the decay rate of the initial data: 
\begin{thm}\label{main2}
Assume the same conditions on $u_{0}$ in Theorem \ref{main1} are valid. We set 
\begin{equation}\label{1-17}
z_{0}(x)\equiv \eta_{*}(x)^{-1}\int_{-\infty}^{x}(u_{0}(y)-\chi_{*}(y))dy.
\end{equation} 
If there exists $\lim_{x\to \pm \infty}(1+|x|)^{\alpha-1}z_{0}(x)\equiv c_{\alpha}^{\pm}$, then the solution to \eqref{1-1} satisfies
\begin{align}
&\lim_{t\to \infty}(1+t)^{\alpha/2}\|u(\cdot, t)-\chi(\cdot, t)-Z(\cdot, t)\|_{L^{\infty}}=0, \ 1<\alpha<2, \label{1-18}\\
&\lim_{t\to \infty}\frac{(1+t)}{\log(1+t)}\|u(\cdot, t)-\chi(\cdot, t)-Z(\cdot, t)-V(\cdot, t)\|_{L^{\infty}}=0, \ \alpha=2,  \label{1-19}
\end{align}
where $\chi(x, t)$ and $V(x, t)$ are defined by \eqref{1-4} and \eqref{1-11}, respectively, while $Z(x, t)$ is defined by 
\begin{equation}\label{1-20}
Z(x, t)\equiv \int_{\R}c_{\alpha}(y)\p_{x}(G(x-y, t)\eta(x, t))(1+|y|)^{-(\alpha-1)}dy, \ \ c_{\alpha}(y)\equiv\begin{cases}
c_{\alpha}^{+}, &y\ge0, \\
c_{\alpha}^{-}, &y<0,
\end{cases}
\end{equation}
\begin{equation}\label{1-21}
G(x, t)\equiv \frac{1}{\sqrt{4\pi t}}e^{-x^{2}/4t}, \ \ \eta(x, t)\equiv \eta_{*}\biggl(\frac{x}{\sqrt{1+t}}\biggl)=\exp \biggl(\frac{b}{2}\int_{-\infty}^{x}\chi(y, t)dy\biggl),
\end{equation}
with $\eta_{*}(x)$ being defined by \eqref{1-13}. Moreover, if $\delta \neq0$, there are positive constants $\nu_{0}$ and $\nu_{1}$ such that 
\begin{align}
\| Z(\cdot, t)\|_{L^{\infty}} &\ge \nu_{0}|\beta_{0}|(1+t)^{-\alpha/2}, \ \ 1<\alpha<2, \label{1-22}\\
\| Z(\cdot, t)+V(\cdot, t)\|_{L^{\infty}} &\ge \nu_{1}|\beta_{1}|(1+t)^{-1}\log(1+t), \ \ \alpha=2 \label{1-23}
\end{align}
holds for sufficiently large $t$, where 
\begin{align}\label{1-24}
\begin{split}
\beta_{0} &\equiv (c_{\alpha}^{+}-c_{\alpha}^{-})\Gamma\biggl(\frac{3-\alpha}{2}\biggl)+\frac{(c_{\alpha}^{+}+c_{\alpha}^{-})b\chi_{*}(0)}{2-\alpha}\Gamma\biggl(2-\frac{\alpha}{2}\biggl), \\
\beta_{1} &\equiv \frac{c_{\alpha}^{+}+c_{\alpha}^{-}}{2}-d\biggl(\frac{b^{2}k}{8}+\frac{c}{3}\biggl), \ \ \Gamma (s)\equiv \int_{0}^{\infty}e^{-x}x^{s-1}dx, \ \ s>0,
\end{split}
\end{align}
while $\delta$ and $d$ are defined by \eqref{1-5} and \eqref{1-13}, respectively. 
\end{thm}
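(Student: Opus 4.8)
The plan is to linearize \eqref{1-1} around $\chi$ by means of the Cole--Hopf-type weight $\eta$ and thereby reduce the whole problem to the heat evolution of the slowly decaying datum $z_0$. First I would pass to the integrated variable $\phi(x,t)\equiv\int_{-\infty}^{x}(u-\chi)(y,t)\,dy$, which is well defined thanks to the conservation of mass, and subtract the Burgers equation \eqref{1-6} for $\chi$ from \eqref{1-1}. Writing $f(u)-\frac b2\chi^2=\frac b2(u-\chi)(u+\chi)+\frac c3u^3$, integrating once in $x$, and using $u-\chi=\phi_x$ gives
\begin{equation*}
\phi_t-\phi_{xx}+b\chi\phi_x=-\frac b2\phi_x^2-\frac c3u^3-ku_{xx}.
\end{equation*}
Setting $z\equiv\eta^{-1}\phi$, so that $z(\cdot,0)=z_0$ by \eqref{1-17}, and using $\eta_x=\frac b2\chi\eta$ together with the fact that $\eta^{-1}$ solves the heat equation, one checks that both the first-order term $b\chi\phi_x$ and the zeroth-order term cancel exactly, leaving the forced heat equation $z_t-z_{xx}=\eta^{-1}\bigl(-\frac b2\phi_x^2-\frac c3u^3-ku_{xx}\bigr)$. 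Duhamel's formula then writes $z$ as the free heat evolution $\int_{\R}G(x-y,t)z_0(y)\,dy$ plus the time integral of the source.

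Next I would extract the leading term of the free evolution. Since $\lim_{x\to\pm\infty}(1+|x|)^{\alpha-1}z_0(x)=c_\alpha^\pm$, I split $z_0=\widetilde z_0+r_0$ with $\widetilde z_0(x)\equiv c_\alpha(x)(1+|x|)^{-(\alpha-1)}$ and $r_0(x)=o((1+|x|)^{-(\alpha-1)})$. Convolving $G(\cdot,t)$ with $\widetilde z_0$ produces precisely the kernel $\Theta(x,t)\equiv\int_{\R}c_\alpha(y)G(x-y,t)(1+|y|)^{-(\alpha-1)}\,dy$, and transforming back through $u-\chi=\partial_x\phi=\partial_x(\eta z)$ identifies $Z=\partial_x(\eta\Theta)$, matching \eqref{1-20}. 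The remainder $u-\chi-Z=\partial_x(\eta(z-\Theta))$ collects the heat evolution of $r_0$ together with all Duhamel source terms. Here I would invoke a Narazaki--Nishihara type estimate (cf.~\cite{Narazaki and Nishihara}) for the Gauss kernel on slowly decaying data: data of size $o((1+|x|)^{-(\alpha-1)})$ evolve to $o((1+t)^{-(\alpha-1)/2})$ in $L^\infty$, with a gain of $(1+t)^{-1/2}$ under one $x$-derivative, so that the $r_0$-part of $u-\chi-Z$ is $o((1+t)^{-\alpha/2})$. For the quadratic source $\phi_x^2$ and, when $1<\alpha<2$, the cubic and dispersive sources, I would use the decay rates from Theorem~\ref{main1} to show these Duhamel contributions decay strictly faster than $(1+t)^{-\alpha/2}$; this yields \eqref{1-18}. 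In the borderline case $\alpha=2$ the heat evolution of $\widetilde z_0\sim|x|^{-1}$ carries a $\log(1+t)$ factor, and the cubic/dispersive source now contributes at the \emph{same} order $(1+t)^{-1}\log(1+t)$; isolating its leading part reproduces $V$ through the mechanism of~\cite{Fukuda} (with the constants $d$ and $\frac{b^2k}8+\frac c3$), and what remains is $o((1+t)^{-1}\log(1+t))$, giving \eqref{1-19}.

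For the lower bounds \eqref{1-22}--\eqref{1-23} I would work directly with the explicit profiles in the self-similar variable $\xi=x/\sqrt{1+t}$. Rescaling $y=\sqrt{1+t}\,\zeta$ in $\Theta$ and using $\eta(x,t)=\eta_*(\xi)$, $\partial_x=(1+t)^{-1/2}\partial_\xi$ shows that $Z(x,t)=(1+t)^{-\alpha/2}\partial_\xi\!\big(\eta_*(\xi)\Theta_*(\xi)\big)(1+o(1))$ for an explicit self-similar profile $\Theta_*$. Evaluating at $\xi=0$ and splitting $Z(0,\cdot)=(\partial_x\eta)\Theta+\eta\,\partial_x\Theta$, the even part of the Gaussian integral (carried by $\Theta$, weighted by $\eta_*'(0)=\frac b2\chi_*(0)\eta_*(0)$) produces the $c_\alpha^++c_\alpha^-$ term, while the odd part (from $\partial_\xi$ hitting the Gaussian) produces the $c_\alpha^+-c_\alpha^-$ term. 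The substitution $s=\zeta^2/4$ evaluates the two integrals as $2^{1-\alpha}\Gamma(1-\tfrac\alpha2)$ and $2^{2-\alpha}\Gamma(\tfrac{3-\alpha}2)$, and after using $\Gamma(2-\tfrac\alpha2)=\tfrac{2-\alpha}2\Gamma(1-\tfrac\alpha2)$ the bracket collapses to a positive multiple of $\beta_0$; hence $\|Z(\cdot,t)\|_{L^\infty}\ge|Z(0,t)|\ge\nu_0|\beta_0|(1+t)^{-\alpha/2}$ for large $t$. The analogous evaluation at $\xi=0$ for $\alpha=2$, where the marginal integral is regularized by the cutoff $(1+|y|)^{-1}$ and yields the $\log(1+t)$, combined with $V(0,t)$ computed from \eqref{1-11}--\eqref{1-13}, produces the coefficient $\beta_1$ and hence \eqref{1-23}. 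The hypothesis $\delta\neq0$ enters here to guarantee that $\chi_*$ (and with it $\eta_*$, $d$, and the constants $\nu_0,\nu_1$) is non-degenerate.

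I expect the main obstacle to be twofold: establishing the sharp slowly-decaying heat estimates with matching derivative control — in particular the passage from $o$ of the data to $o$ of the evolution uniformly in $x$ — and, in the critical case $\alpha=2$, cleanly separating the two profiles $Z$ and $V$ that live at the common rate $(1+t)^{-1}\log(1+t)$ while proving the true remainder is of strictly smaller order. Verifying that the quadratic, cubic, and dispersive Duhamel terms are subordinate will rely delicately on the rates in Theorem~\ref{main1} and may require an auxiliary weighted or time-weighted estimate rather than the $L^\infty$ bound alone.
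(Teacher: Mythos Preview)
Your proposal is correct and follows essentially the same route as the paper. Your substitution $z=\eta^{-1}\phi$ with $\phi=\int_{-\infty}^x(u-\chi)$ is precisely the content of the paper's Lemma~\ref{lem2-4}: the operator $U$ in \eqref{2-11}--\eqref{2-12} is nothing but $\partial_x\bigl(\eta(x,t)\cdot(G(t-\tau)*\,\eta^{-1}\!\int_{-\infty}^{\cdot}h)\bigr)$, so your free heat evolution of $z_0$ is exactly $U[\psi_0](\cdot,t,0)$ and your Duhamel sources are the paper's $J_i$ (resp.\ $K_i$). The paper's Proposition~\ref{prop4-1} is your $\widetilde z_0/r_0$ splitting, proved via the same $|y|\lessgtr\sqrt{1+t}$ decomposition (estimate \eqref{3-11}) you attribute to Narazaki--Nishihara; the lower bounds \eqref{1-22}--\eqref{1-23} are likewise obtained by evaluating $Z(0,t)$ and $V(0,t)$ and exploiting the even/odd parity of the Gaussian, exactly as you outline. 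Two small points worth flagging: for the Duhamel bound on the quadratic source you will need an $L^2$ estimate on $u-\chi$ (the paper's Proposition~\ref{prop3-2}) in addition to the $L^\infty$ bound of Theorem~\ref{main1}; and for $\alpha=2$ the clean way to ``isolate the leading part reproducing $V$'' is to subtract the auxiliary linear problem \eqref{5-1} with sources $\chi^3$ and $\chi_{xxx}$ (rather than $u^3$ and $u_{xx}$), so that the remaining sources involve the faster-decaying difference $\psi=u-\chi$---this is the step that makes the separation of $Z$ and $V$ clean.
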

From \eqref{1-18}, \eqref{1-19}, \eqref{1-22} and \eqref{1-23}, we get the following lower bound estimate of $u-\chi$:
\begin{cor}\label{main3}
Under the same assumptions in Theorem \ref{main2}. There are positive constants $\mu_{0}$ and $\mu_{1}$ such that the following estimate 
\begin{align}\label{1-25}
\|u(\cdot, t)-\chi(\cdot, t)\|_{L^{\infty}} \ge \begin{cases}
\mu_{0}|\beta_{0}|(1+t)^{-\alpha/2}, &1<\alpha<2, \\
\mu_{1}|\beta_{1}|(1+t)^{-1}\log(1+t), &\alpha=2
\end{cases}
\end{align}
holds for sufficiently large $t$.
\end{cor}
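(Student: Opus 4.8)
The plan is to obtain the lower bound as a direct consequence of the convergence statements \eqref{1-18}--\eqref{1-19} together with the lower bounds \eqref{1-22}--\eqref{1-23} already proved in Theorem \ref{main2}, using nothing more than the reverse triangle inequality. If $\beta_{0}=0$ (resp.\ $\beta_{1}=0$), the asserted inequality holds trivially, since its right-hand side vanishes while the left-hand side is a norm; so I would assume throughout that $\beta_{0}\neq0$ and $\beta_{1}\neq0$.

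For $1<\alpha<2$, I would split $u-\chi=(u-\chi-Z)+Z$ and apply the reverse triangle inequality to get
\begin{equation*}
\|u(\cdot,t)-\chi(\cdot,t)\|_{L^{\infty}}\ge \|Z(\cdot,t)\|_{L^{\infty}}-\|u(\cdot,t)-\chi(\cdot,t)-Z(\cdot,t)\|_{L^{\infty}}.
\end{equation*}
The first term on the right is at least $\nu_{0}|\beta_{0}|(1+t)^{-\alpha/2}$ for large $t$ by \eqref{1-22}. The second term is controlled by \eqref{1-18}: given any $\e>0$ there is $T>0$ with $\|u-\chi-Z\|_{L^{\infty}}<\e(1+t)^{-\alpha/2}$ for $t\ge T$. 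Taking $\e=\tfrac{1}{2}\nu_{0}|\beta_{0}|$ lets the error absorb exactly half of the main term, leaving $\|u-\chi\|_{L^{\infty}}\ge\tfrac{1}{2}\nu_{0}|\beta_{0}|(1+t)^{-\alpha/2}$ for large $t$. Thus $\mu_{0}=\tfrac{1}{2}\nu_{0}$ works.

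The case $\alpha=2$ would be handled identically, now grouping $u-\chi=(u-\chi-Z-V)+(Z+V)$. Combining the reverse triangle inequality, the lower bound \eqref{1-23} for $\|Z+V\|_{L^{\infty}}$, and the fact from \eqref{1-19} that the remainder is $o((1+t)^{-1}\log(1+t))$, I would arrive at $\|u-\chi\|_{L^{\infty}}\ge\tfrac{1}{2}\nu_{1}|\beta_{1}|(1+t)^{-1}\log(1+t)$ for large $t$, so that $\mu_{1}=\tfrac{1}{2}\nu_{1}$ suffices.

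Since the corollary is a formal consequence of Theorem \ref{main2}, I do not expect any genuine obstacle. The only point requiring care is that the error in the limits \eqref{1-18}--\eqref{1-19} decays at precisely the same rate as the lower bound in \eqref{1-22}--\eqref{1-23}, which is what makes the absorption argument close; this matching is exactly the reason the normalizing factors $(1+t)^{\alpha/2}$ and $(1+t)/\log(1+t)$ were built into the statement of Theorem \ref{main2}.
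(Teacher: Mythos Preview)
Your proposal is correct and matches the paper's approach exactly: the paper does not write out a separate proof for this corollary but simply states that it follows from \eqref{1-18}, \eqref{1-19}, \eqref{1-22} and \eqref{1-23}, which is precisely the reverse-triangle-inequality argument you spell out. Your treatment of the degenerate cases $\beta_{0}=0$, $\beta_{1}=0$ and your explicit identification of $\mu_{i}=\tfrac{1}{2}\nu_{i}$ are welcome details that the paper leaves implicit.
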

\begin{rem}\label{rem1-4}
\rm{
From \eqref{1-16} and \eqref{1-25}, if $\delta \neq0$ and $\beta_{0}\neq 0$, we see that the solution $u(x, t)$ converges to the nonlinear diffusion wave $\chi(x, t)$ at the optimal rate of $t^{-\alpha/2}$ when $1<\alpha<2$. In the case of $\alpha=2$, the optimal asymptotic rate to $\chi(x, t)$ is $t^{-1}\log t$ under the assumptions $\delta \neq0$ and $\beta_{1}\neq 0$. 
}
\end{rem}
\begin{rem}\label{rem1-5}
\rm{
In the case of $k=0$, the similar estimate to \eqref{1-16} and \eqref{1-25} were obtained by evaluating $u-\chi$ directly (cf. Kitagawa~\cite{Kitagawa}). However, the proof of the lower bound estimate of $u-\chi$ in~\cite{Kitagawa} is only valid for the concrete perturbations such as $w_{0}(x)=(1+x^{2})^{-(\alpha-1)/2}$ ($1<\alpha \le2$) or $w_{0}(x)=0$ ($\alpha=2$). For this reason, we can say that our results are generalization of~\cite{Kitagawa}.
}
\end{rem}
\begin{rem}\label{rem1-6}
\rm{
In Narazaki and Nishihara~\cite{Narazaki and Nishihara}, they assumed that there exists\\ $\lim_{|x|\to \infty}(1+|x|)^{\gamma}u_{0}(x)=c_{\gamma}$. However, it seems a bit restrictive to assume the limit of $(1+|x|)^{\gamma}u_{0}(x)$ coinside each other. In fact, by introducing the function $c_{\alpha}(y)$ the definition of asymptotic profile $Z(x, t)$ \eqref{1-20}, we are able to handle the case where the limits are different. Besides, such a modification can be applied in the case of the heat equation and the damped wave equation treated in~\cite{Narazaki and Nishihara}. 
}
\end{rem}

This paper is organized as follows. In Section 2, we mention the global existence and the $L^{p}$-decay estimates of the solutions to \eqref{1-1}. In addition, we prepare a couple of lemmas for an auxiliary problems. Next in Section 3, we give the proof of Theorem \ref{main1}. Finally, we prove Theorem \ref{main2} in last two sections (Section 4 is for $1<\alpha<2$, Section 5 is for $\alpha=2$). The main novelty of this paper is to obtain the lower bound estimate of $u-\chi$ by constructing the second asymptotic profile for the solution to \eqref{1-1}. The main idea of constructing the second asymptotic profile is based on the asymptotic formula for the heat equation with slowly decaying data obtained in Narazaki and Nishihara~\cite{Narazaki and Nishihara}.  
\vskip10pt
\par\noindent
\textbf{\bf{Notations.}} In this paper, for $1\le p \le \infty$, $L^{p}(\R)$ denotes the usual Lebesgue spaces. In the following, for $f, g \in L^{2}(\R)\cap L^{1}(\R)$, we denote the Fourier transform of $f$ and the inverse Fourier transform of $g$ as follows:
\begin{align*}
\hat{f}(\xi)&\equiv \mathcal{F}[f](\xi)=\frac{1}{\sqrt{2\pi}}\int_{\R}e^{-ix\xi}f(x)dx,\\
\check{g}(x)&\equiv \mathcal{F}^{-1}[g](x)=\frac{1}{\sqrt{2\pi}}\int_{\R}e^{ix\xi}g(\xi)d\xi.
\end{align*}
Then, for $s\ge0$, the Sobolev spaces are defined by 
\begin{equation*}
H^{s}(\R)\equiv \biggl\{f\in L^{2}(\R); \ \|f\|_{H^{s}}\equiv \biggl(\int_{\R}(1+|\xi|^{2})^{s}|\hat{f}(\xi)|^{2}d\xi\biggl)^{1/2}<\infty \biggl\}. 
\end{equation*}
Throughout this paper, $C$ denotes various positive constants which may depends on the initial data or other parameters. However, we note that $C$ does not depend on the time variable $t$.

\section{Preliminaries}
In order to prove the main theorems, we prepare a couple of lemmas. First, we prove the global existence and the decay estimates for solutions to \eqref{1-1}. To state such a result, we introduce the Green function associated with the linear part of the equation in \eqref{1-1}:
\begin{equation*}
S(x, t)\equiv \mathcal{F}^{-1}[e^{-t|\xi|^{2}+itk\xi^{3}}](x).
\end{equation*}
By a direct calculation, we can show the estimates of $S(x, t)$ and its derivatives. For the proof, see Lemma A.1 and Lemma A.2 in~\cite{Karch 1999}.
\begin{lem}\label{Green1}
Let $l$ be a non-negative integer. Then, for $p\in[2, \infty]$, we have
\begin{align}
\| \p_{x}^{l}S(\cdot, t)\|_{L^{1}}&\le Ct^{-l/2}(1+t^{-1/4}), \ \ t>0, \label{G1}\\
\| \p_{x}^{l}S(\cdot, t)\|_{L^{p}}&\le Ct^{-(1/2)(1-1/p)-l/2}, \ \ t>0. \label{G2}
\end{align}
\end{lem}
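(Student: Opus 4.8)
The plan is to read everything off the explicit Fourier symbol $e^{-t|\xi|^2+itk\xi^3}$, exploiting the elementary but decisive fact that $|e^{itk\xi^3}|=1$: the dispersive cubic phase is a pure modulus-one oscillation, so it never enters an estimate that sees only the absolute value of the symbol. Since $\widehat{\p_x^l S}(\xi,t)=(i\xi)^l e^{-t|\xi|^2+itk\xi^3}$, we have $|\widehat{\p_x^l S}(\xi,t)|=|\xi|^l e^{-t\xi^2}$. For the endpoints $p=2$ and $p=\infty$ this is immediate: by Plancherel $\|\p_x^l S(\cdot,t)\|_{L^2}=\||\xi|^l e^{-t\xi^2}\|_{L^2_\xi}$, and by the triangle inequality for the inverse transform $\|\p_x^l S(\cdot,t)\|_{L^\infty}\le C\||\xi|^l e^{-t\xi^2}\|_{L^1_\xi}$; both are Gaussian moment integrals, and the substitution $\eta=\sqrt t\,\xi$ yields the powers $t^{-l/2-1/4}$ and $t^{-l/2-1/2}$, which are exactly \eqref{G2} at $p=2,\infty$. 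The intermediate range $p\in(2,\infty)$ then follows from log-convexity of $L^p$ norms (Riesz--Thorin) with $\theta=2/p$: $\|\p_x^l S\|_{L^p}\le\|\p_x^l S\|_{L^2}^{2/p}\|\p_x^l S\|_{L^\infty}^{1-2/p}$, and a one-line check of the exponent reproduces $-\tfrac12(1-1/p)-\tfrac l2$.

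The $L^1$ estimate \eqref{G1} is the one needing care, since Plancherel is unavailable and bounding the inverse transform by the $L^1_\xi$ norm of the symbol controls only $\|\cdot\|_{L^\infty}$. Instead I would use the optimized weighted Cauchy--Schwarz inequality $\|g\|_{L^1}\le\sqrt{2\pi}\,\|g\|_{L^2}^{1/2}\|xg\|_{L^2}^{1/2}$, obtained by writing $|g|=(1+a^2x^2)^{-1/2}(1+a^2x^2)^{1/2}|g|$, applying Cauchy--Schwarz, and optimizing in $a>0$. Both factors are computable on the Fourier side: $\|\p_x^l S\|_{L^2}$ is the Gaussian moment above, while $\mathcal{F}[x\,\p_x^l S]=i\,\p_\xi[(i\xi)^l e^{-t|\xi|^2+itk\xi^3}]$, so by Plancherel $\|x\,\p_x^l S\|_{L^2}=\|\p_\xi[(i\xi)^l e^{-t|\xi|^2+itk\xi^3}]\|_{L^2_\xi}$. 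Carrying out the $\xi$-derivative produces terms of sizes $|\xi|^{l-1}$, $t|\xi|^{l+1}$ and $|k|\,t|\xi|^{l+2}$ times $e^{-t\xi^2}$ (the first absent for $l=0$), whose Gaussian $L^2_\xi$ norms give $\|x\,\p_x^l S\|_{L^2}\le C\,t^{-l/2}(t^{1/4}+t^{-1/4})$, the $t^{-1/4}$ piece being precisely the dispersive contribution from the $|k|\,t|\xi|^{l+2}$ term. Feeding $\|\p_x^l S\|_{L^2}\le Ct^{-l/2-1/4}$ and this bound into the geometric-mean inequality and using $(t^{1/4}+t^{-1/4})^{1/2}\le t^{1/8}+t^{-1/8}$ yields $\|\p_x^l S\|_{L^1}\le Ct^{-l/2}(1+t^{-1/4})$, which is \eqref{G1}.

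The single genuine obstacle is obtaining the sharp correction $t^{-1/4}$ rather than $t^{-1/2}$, and this is exactly where one must not be wasteful. The naive additive form $\|g\|_{L^1}\le C(\|g\|_{L^2}+\|xg\|_{L^2})$ would turn the factor $t$ from the differentiated cubic phase into a spurious $t^{-1/2}$, and would even worsen the large-$t$ rate to $t^{-l/2+1/4}$; the geometric mean $\|g\|_{L^2}^{1/2}\|xg\|_{L^2}^{1/2}$ balances the two factors and restores the correct scaling. A clean sanity check is the limit $k=0$, where $S$ is the heat kernel with $\|\p_x^l S\|_{L^1}=Ct^{-l/2}$: there $\|\p_x^l S\|_{L^2}=Ct^{-l/2-1/4}$ and $\|x\,\p_x^l S\|_{L^2}=Ct^{-l/2+1/4}$, whose geometric mean is exactly $t^{-l/2}$, confirming that the interpolated inequality reproduces the known sharp rate and that the extra $t^{-1/4}$ for $k\neq0$ originates solely in the $|k|\,t|\xi|^{l+2}$ term. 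The same conclusion also follows from the exact self-similar scaling $\p_x^l S(x,t)=t^{-(l+1)/2}H^{(l)}(xt^{-1/2};kt^{-1/2})$, which reduces \eqref{G1} to $\|H^{(l)}(\cdot;\lambda)\|_{L^1}\le C(1+|\lambda|^{1/2})$ for the rescaled profile, established by the identical weighted Cauchy--Schwarz argument in the $\lambda$-variable.
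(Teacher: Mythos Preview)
Your argument is correct. The paper itself does not prove this lemma; it simply states that the estimates follow ``by a direct calculation'' and refers the reader to Lemmas~A.1 and~A.2 of Karch~\cite{Karch 1999}, so there is no in-paper proof to compare against. Your route---Plancherel for $p=2$, the trivial bound $\|\mathcal F^{-1}[m]\|_{L^\infty}\le C\|m\|_{L^1_\xi}$ for $p=\infty$, log-convex interpolation for intermediate $p$, and the optimized weighted Cauchy--Schwarz inequality $\|g\|_{L^1}\le\sqrt{2\pi}\,\|g\|_{L^2}^{1/2}\|xg\|_{L^2}^{1/2}$ for \eqref{G1}---is standard and fully self-contained. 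In particular, your identification of the differentiated dispersive phase $3ikt\xi^{l+2}$ as the sole source of the short-time correction $t^{-1/4}$, and your observation that the geometric-mean form of the weighted inequality (rather than the additive form) is essential to recover the stated rate, are both on the mark.
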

\noindent
Moreover, for the convolution $S(t)*g$, we obtain the following estimate. For the proof, see Lemma 2.2 in~\cite{Fukuda}: 
\begin{lem}\label{Green2}
Let $s$ be a positive integer. Suppose $g\in L^{1}(\R) \cap H^{s}(\R)$. Then the estimate
\begin{equation}\label{G3}
\| \p^{l}_{x}(S(t)*g)\|_{L^{2}} \le C(1+t)^{-1/4-l/2}\|g\|_{L^{1}}+Ce^{-t}\| \p^{l}_{x}g\|_{L^{2}}, \ \ t\ge0
\end{equation}
holds for any integer $0\le l \le s$.
\end{lem}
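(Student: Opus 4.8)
\textbf{Plan for the proof of Lemma \ref{Green2}.}
The plan is to reduce the inhomogeneous-looking convolution estimate \eqref{G3} to the $L^1$ and $L^p$ bounds on $S(\cdot,t)$ and its derivatives provided by Lemma \ref{Green1}, splitting the time interval into a near part $0\le t\le 1$ and a far part $t\ge 1$ so that the two terms on the right-hand side of \eqref{G3} appear naturally. First I would record that, since the Fourier symbol is $e^{-t|\xi|^2+itk\xi^3}$, the operator $S(t)\ast(\cdot)$ commutes with $\p_x$, so $\p_x^{l}(S(t)\ast g)=(\p_x^{l}S(t))\ast g=S(t)\ast(\p_x^{l}g)$; I may distribute the derivatives onto whichever factor is convenient on each time regime. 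The target norm is $L^2$, so Young's convolution inequality in the two complementary forms $\|h\ast g\|_{L^2}\le\|h\|_{L^1}\|g\|_{L^2}$ and $\|h\ast g\|_{L^2}\le\|h\|_{L^2}\|g\|_{L^1}$ will be the workhorses.

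For the far regime $t\ge 1$ I would put all $l$ derivatives on the kernel and use $\|\p_x^{l}(S(t)\ast g)\|_{L^2}=\|(\p_x^{l}S(t))\ast g\|_{L^2}\le\|\p_x^{l}S(\cdot,t)\|_{L^2}\,\|g\|_{L^1}$, then invoke \eqref{G2} with $p=2$, which gives $\|\p_x^{l}S(\cdot,t)\|_{L^2}\le Ct^{-1/4-l/2}$. Since $t\ge 1$ we have $t^{-1/4-l/2}\le C(1+t)^{-1/4-l/2}$, yielding exactly the first term of \eqref{G3} (and the second term, being $Ce^{-t}\|\p_x^{l}g\|_{L^2}$, is harmless here). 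For the near regime $0\le t\le 1$ the kernel estimate \eqref{G2} degenerates like $t^{-1/4-l/2}$ and is not integrable near $t=0$ once we want a clean $(1+t)$-type bound, so here I would instead place the derivatives on $g$ and bound $\|S(t)\ast(\p_x^{l}g)\|_{L^2}\le\|S(\cdot,t)\|_{L^1}\,\|\p_x^{l}g\|_{L^2}$. By \eqref{G1} with $l=0$ we have $\|S(\cdot,t)\|_{L^1}\le C(1+t^{-1/4})$, which however is again singular at $t=0$; to obtain the $e^{-t}$-type control of the $H^s$ norm advertised in \eqref{G3}, the cleaner route is to compare with the heat semigroup.

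Concretely, I would factor the symbol as $e^{-t|\xi|^2+itk\xi^3}=e^{-(t/2)|\xi|^2}\cdot e^{-(t/2)|\xi|^2+itk\xi^3}$, so that $S(t)=G_{t/2}\ast R_t$ where $G_{t/2}$ is the Gaussian kernel (the heat kernel at time $t/2$) and $R_t=\mathcal F^{-1}[e^{-(t/2)|\xi|^2+itk\xi^3}]$. The point is that $\|R_t\|_{L^1}\le C$ uniformly on the \emph{bounded} interval $0\le t\le 1$ (the dispersive factor is handled exactly as in Lemma A.1, A.2 of \cite{Karch 1999}, giving a uniform $L^1$ bound once $t$ stays bounded, since the $t^{-1/4}$ growth only occurs as $t\to\infty$). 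Then on $0\le t\le 1$,
\begin{equation*}
\|\p_x^{l}(S(t)\ast g)\|_{L^2}=\|R_t\ast G_{t/2}\ast(\p_x^{l}g)\|_{L^2}\le\|R_t\|_{L^1}\,\|G_{t/2}\|_{L^1}\,\|\p_x^{l}g\|_{L^2}\le C\|\p_x^{l}g\|_{L^2},
\end{equation*}
using $\|G_{t/2}\|_{L^1}=1$. Since on $0\le t\le 1$ we have $C\le Ce\cdot e^{-t}$, this is absorbed into the second term $Ce^{-t}\|\p_x^{l}g\|_{L^2}$ of \eqref{G3}, while the first term is nonnegative and only helps. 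Combining the two regimes gives \eqref{G3} for all $t\ge 0$ and all $0\le l\le s$.

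The main obstacle I anticipate is the behavior near $t=0$: the kernel derivative bound \eqref{G2} blows up there, so one cannot naively use the same "derivatives on the kernel, $L^1$ norm of $g$" splitting uniformly in $t$. The device that resolves this is precisely the Gaussian-factorization above, which trades the singular kernel estimate for the bounded operator norm of the heat semigroup on $H^s$ (equivalently $L^2$) and a \emph{uniform} $L^1$ bound on the remaining dispersive factor over the compact interval $[0,1]$; matching this constant bound to the decaying exponential $e^{-t}$ is what produces the second term in \eqref{G3}. The rest is routine application of Young's inequality and Lemma \ref{Green1}.
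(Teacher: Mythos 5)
There is a genuine gap in your treatment of the near regime $0\le t\le1$. You factor $S(t)=G_{t/2}\ast R_t$ with $R_t=\mathcal{F}^{-1}[e^{-(t/2)|\xi|^{2}+itk\xi^{3}}]$ and assert that $\|R_t\|_{L^{1}}\le C$ uniformly on $[0,1]$ because ``the $t^{-1/4}$ growth only occurs as $t\to\infty$.'' This has the singularity on the wrong side: in \eqref{G1} the bound $\|S(\cdot,t)\|_{L^{1}}\le C(1+t^{-1/4})$ tends to the constant $C$ as $t\to\infty$ and blows up like $t^{-1/4}$ as $t\to0^{+}$. That blow-up is real, not an artifact: for $k\neq0$, rescaling $\xi=(tk)^{-1/3}\eta$ shows that $R_t$ is a rescaled Airy-type kernel with Gaussian regularization of strength $\sim t^{1/3}$; the pure Airy kernel decays only like $|x|^{-1/4}$ on its oscillatory side and is \emph{not} in $L^{1}$, and the regularized kernel satisfies $\|R_t\|_{L^{1}}\sim t^{-1/4}$ as $t\to0^{+}$. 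So your chain $\|R_t\ast G_{t/2}\ast\p_x^{l}g\|_{L^{2}}\le\|R_t\|_{L^{1}}\|G_{t/2}\|_{L^{1}}\|\p_x^{l}g\|_{L^{2}}\le C\|\p_x^{l}g\|_{L^{2}}$ fails exactly where you need it. (Your far regime $t\ge1$, using \eqref{G2} with $p=2$, is fine.)

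The fix is much simpler than your factorization and makes it unnecessary: by Plancherel, the multiplier satisfies $|e^{-t|\xi|^{2}+itk\xi^{3}}|=e^{-t|\xi|^{2}}\le1$, so $\|\p_x^{l}(S(t)\ast g)\|_{L^{2}}\le\|\p_x^{l}g\|_{L^{2}}\le e\cdot e^{-t}\|\p_x^{l}g\|_{L^{2}}$ on $0\le t\le1$, which is absorbed into the second term of \eqref{G3}. In fact the standard proof (the paper defers to Lemma 2.2 of \cite{Fukuda}, but the same device appears verbatim in the estimates of $I_{1}$ and $I_{2}$ in the proof of Proposition \ref{GE}) needs no time-splitting at all: split frequencies instead, estimating $\|(i\xi)^{l}e^{-t|\xi|^{2}+itk\xi^{3}}\hat g\|_{L^{2}(|\xi|\le1)}\le\|\hat g\|_{L^{\infty}}\bigl(\int_{|\xi|\le1}|\xi|^{2l}e^{-2t|\xi|^{2}}d\xi\bigr)^{1/2}\le C(1+t)^{-1/4-l/2}\|g\|_{L^{1}}$ for the low frequencies, and using $e^{-t|\xi|^{2}}\le e^{-t}$ for $|\xi|\ge1$ to get $e^{-t}\|\p_x^{l}g\|_{L^{2}}$ for the high frequencies, valid uniformly for all $t\ge0$. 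Your overall architecture (Young's inequality plus Lemma \ref{Green1}) is salvageable once the near-time step is replaced as above, but as written the key claim underpinning it is false.
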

\noindent
By using Lemma \ref{Green1} and Lemma \ref{Green2}, we are able to obtain the global existence and the decay estimates for the solutions to \eqref{1-1} as follows: 
\begin{prop}\label{GE}
Let $s$ be a positive integer. Assume that $u_{0}\in L^{1}(\R)\cap H^{s}(\R)$ and $E_{s}\equiv \|u_{0}\|_{L^{1}}+\|u_{0}\|_{H^{s}}$ is sufficiently small. Then \eqref{1-1} has a unique global solution $u(x, t) \in C^{0}([0, \infty); H^{s})$. Moreover the solution satisfies the following estimates:
\begin{align}
\| \p^{l}_{x}u(\cdot, t)\|_{L^{2}}&\le CE_{s}(1+t)^{-1/4-l/2}, \ \ t\ge0, \label{2-1}\\
\| \p^{l}_{x}u(\cdot, t)\|_{L^{1}}&\le CE_{s}t^{-l/2}(1+t^{-1/4}), \ \ t>0 \label{2-2}
\end{align}
hold for any integer $0\le l \le s$. In particular, we get 
\begin{equation}\label{2-3}
\| \p^{l}_{x}u(\cdot, t)\|_{L^{\infty}}\le CE_{s}(1+t)^{-1/2-l/2}, \ \ t\ge0
\end{equation}
for any integer $0\le l \le s-1$. 
\end{prop}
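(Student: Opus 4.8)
The plan is to solve \eqref{1-1} by a contraction argument applied to its Duhamel formulation in a time-weighted space, and then to read off the $L^{1}$ and $L^{\infty}$ bounds from the resulting $L^{2}$ decay. Rewriting \eqref{1-1} through the Green function $S(x,t)$ and transferring the spatial derivative in $(f(u))_{x}$ onto $S$, a solution is a fixed point of
\[
\Phi[u](t)\equiv S(t)*u_{0}-\int_{0}^{t}\p_{x}S(t-\tau)*f(u(\tau))\,d\tau .
\]
I would work in the space $X$ with the norm
\[
\|u\|_{X}\equiv \sup_{t\ge0}\sum_{l=0}^{s}(1+t)^{1/4+l/2}\|\p_{x}^{l}u(\cdot,t)\|_{L^{2}},
\]
whose finiteness already encodes \eqref{2-1}, and show that $\Phi$ maps a small ball of $X$ into itself and is a contraction there once $E_{s}$ is small. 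Since $s\ge1$, the one-dimensional embedding $H^{1}\hookrightarrow L^{\infty}$ controls $\|u(\cdot,t)\|_{L^{\infty}}$ by $\|u\|_{X}$ with weight $(1+t)^{-1/2}$, so no separate $L^{\infty}$ quantity needs to be carried in the working norm.

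For the linear term I apply Lemma \ref{Green2} directly, giving $\|\p_{x}^{l}S(t)*u_{0}\|_{L^{2}}\le C(1+t)^{-1/4-l/2}\|u_{0}\|_{L^{1}}+Ce^{-t}\|\p_{x}^{l}u_{0}\|_{L^{2}}\le CE_{s}(1+t)^{-1/4-l/2}$. For the nonlinear term I split $\int_{0}^{t}=\int_{0}^{t/2}+\int_{t/2}^{t}$. On $[0,t/2]$ I place all derivatives on $S$ and combine the $L^{2}$ bound \eqref{G2} with $\|f(u)\|_{L^{1}}\le C(\|u\|_{L^{2}}^{2}+\|u\|_{L^{\infty}}\|u\|_{L^{2}}^{2})$; on $[t/2,t]$ I keep only the transferred derivative on $S$, combining the $L^{1}$ bound \eqref{G1} for $\p_{x}S$ with a Sobolev product (Moser-type) estimate for $\p_{x}^{l}f(u)$ in $L^{2}$. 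Because the powers of $t-\tau$ coming from \eqref{G1} are integrable near $\tau=t$ while the time weights in $\|u\|_{X}$ supply decay near $\tau=0$, carrying these through the $\tau$-integrals yields $\|\Phi[u]\|_{X}\le CE_{s}+C(\|u\|_{X}^{2}+\|u\|_{X}^{3})$ together with a matching Lipschitz bound for $\|\Phi[u]-\Phi[v]\|_{X}$. For $E_{s}$ small the contraction principle then produces a unique global $u$ with $\|u\|_{X}\le CE_{s}$, which is exactly \eqref{2-1}; continuity of the Duhamel map gives $u\in C^{0}([0,\infty);H^{s})$ and global uniqueness in this class follows from local uniqueness plus continuation.

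With \eqref{2-1} in hand, the remaining bounds are routine. For \eqref{2-2} I return to the integral equation, estimate in $L^{1}$ using the $L^{1}\to L^{1}$ part of Lemma \ref{Green1}, namely $\|\p_{x}^{l}S(t)\|_{L^{1}}\le Ct^{-l/2}(1+t^{-1/4})$, and feed in the already-proved $L^{2}$ decay of $u$ (which controls $\|f(u)\|_{L^{1}}$); the singular factor $t^{-l/2}(1+t^{-1/4})$ reproduces the stated short-time behavior. For \eqref{2-3} I apply the Gagliardo--Nirenberg inequality $\|\p_{x}^{l}u\|_{L^{\infty}}\le C\|\p_{x}^{l}u\|_{L^{2}}^{1/2}\|\p_{x}^{l+1}u\|_{L^{2}}^{1/2}$ for $0\le l\le s-1$ and substitute \eqref{2-1} at orders $l$ and $l+1$, so that the exponents add to $-1/2-l/2$.

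The main obstacle is the nonlinear estimate in the second paragraph: one must simultaneously cope with the mild time singularity of the Green-function bounds in Lemma \ref{Green1} as $\tau\to t$ and extract the sharp decay as $t\to\infty$. This forces the $\int_{0}^{t/2}+\int_{t/2}^{t}$ splitting and a careful accounting of how many derivatives land on $S$ versus on $f(u)$ on each piece; it is also where the cubic term $(c/3)u^{3}$ must be shown not to degrade the rate, which is why the regularity $s\ge1$ (and the algebra property of $H^{s}$) is used.
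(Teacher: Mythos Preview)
Your overall strategy coincides with the paper's: a contraction argument in the same time-weighted space $X$, followed by the $L^{1}$ estimate via Young's inequality and the $L^{\infty}$ estimate via the one-dimensional Gagliardo--Nirenberg (Sobolev) inequality. The linear part and the derivations of \eqref{2-2} and \eqref{2-3} are handled exactly as in the paper.

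The one substantive difference is in how you bound the Duhamel term in $L^{2}$. The paper does \emph{not} use physical-space Young inequalities here; instead it passes to Fourier variables via Plancherel and splits into $|\xi|\le 1$ and $|\xi|\ge 1$. On low frequencies it performs the time splitting $\int_{0}^{t/2}+\int_{t/2}^{t}$ together with the $L^{1}$ bounds \eqref{G11} on $f(u)-f(v)$; on high frequencies it uses a Cauchy--Schwarz-in-$\tau$ argument exploiting $e^{-(t-\tau)|\xi|^{2}}\le e^{-(t-\tau)}$ together with the $L^{2}$ bounds \eqref{G12}. Your purely physical-space route also closes, and is arguably more direct, but one point needs care: on $[0,t/2]$ you put all $l+1$ derivatives on $S$ and invoke \eqref{G2}, which yields a factor $(t-\tau)^{-3/4-l/2}$. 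For $l\ge 1$ the resulting $\tau$-integral is of order $t^{1/4-l/2}$, which blows up as $t\to 0$. The paper's frequency split avoids this because the high-frequency piece decays exponentially in $t-\tau$ rather than polynomially. In your scheme the repair is easy---for bounded $t$ drop the splitting and use the $[t/2,t]$-type estimate on all of $[0,t]$, since $\|\p_{x}S(\cdot,t-\tau)\|_{L^{1}}\le C(t-\tau)^{-1/2}(1+(t-\tau)^{-1/4})$ is integrable near $\tau=t$---but it should be stated explicitly.
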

\begin{proof}
We consider the following integral equation associated with the initial value problem \eqref{1-1}:
\begin{align}\label{G4}
\begin{split}
u(t)&=S(t)*u_{0}-\int_{0}^{t}S(t-\tau)*(f(u)_{x})(\tau)d\tau \\
&=S(t)*u_{0}-\int_{0}^{t}(\p_{x}S(t-\tau))*(f(u))(\tau)d\tau. 
\end{split}
\end{align}
We solve this integral equation by using the contraction mapping principle for the mapping 
\begin{equation}\label{G5}
N[u]\equiv S(t)*u_{0}-\int_{0}^{t}(\p_{x}S(t-\tau))*(f(u))(\tau)d\tau.
\end{equation}
We set $N_{0}\equiv S(t)*u_{0}$. Let us introduce the Banach space $X$ as follows:
\begin{equation}\label{G6}
X\equiv \biggl\{u\in C^{0}([0, \infty); H^{s}); \ \|u\|_{X}\equiv \sum_{l=0}^{s}\sup_{t\ge0}(1+t)^{1/4+l/2}\|\p_{x}^{l}u(\cdot, t)\|_{L^{2}}<\infty \biggl\}.
\end{equation}
From Lemma \ref{Green2}, we have
\begin{equation}\label{G7}
\exists C_{0}>0 \ \ s.t. \ \ \|N_{0}\|_{X}\le C_{0}E_{s}.
\end{equation}
We apply the contraction mapping principle to \eqref{G5} on a closed subset $Y$ of $X$ below: 
\begin{equation*}
Y\equiv \{u\in X; \ \|u\|_{X}\le2C_{0}E_{s}\}.
\end{equation*}
Then it is sufficient to show the following estimates: 
\begin{equation}\label{G8}
\|N[u]\|_{X}\le2C_{0}E_{s}, 
\end{equation}
\begin{equation}\label{G9}
\|N[u]-N[v]\|_{X}\le \frac{1}{2}\|u-v\|_{X} 
\end{equation}
for $u, v \in Y$. If we have shown \eqref{G8} and \eqref{G9}, by using Banach fixed-point theorem, we see that \eqref{G4} has a unique global solution in $Y$. 

Here and later $E_{s}$ is assumed to be small. First, from the Sobolev inequality 
\begin{equation*}
\|F\|_{L^{\infty}}\le \sqrt{2}\|F\|_{L^{2}}^{1/2}\|F'\|_{L^{2}}^{1/2}, \ \ F\in H^{1}(\R)
\end{equation*}
for $0\le l \le s-1$, we have 
\begin{equation}\label{G10}
\|\p_{x}^{l}u(\cdot, t)\|_{L^{\infty}}\le \|u\|_{X}(1+t)^{-1/2-l/2}.
\end{equation}
Before proving \eqref{G8} and \eqref{G9}, we prepare the following estimates for $0\le l \le s$, $u, v \in Y$:
\begin{align}
\|\p_{x}^{l}(f(u)-f(v))(t)\|_{L^{1}}&\le C(\|u\|_{X}+\|v\|_{X})\|u-v\|_{X}(1+t)^{-1/2-l/2}, \label{G11}\\
\|\p_{x}^{l}(f(u)-f(v))(t)\|_{L^{2}}&\le C(\|u\|_{X}+\|v\|_{X})\|u-v\|_{X}(1+t)^{-3/4-l/2}.  \label{G12}
\end{align}
We shall prove only \eqref{G11}, since we can prove \eqref{G12} in the same way. We have from \eqref{G6} and \eqref{G10}
\begin{align*}
\begin{split}
&\|\p_{x}^{l}(f(u)-f(v))(t)\|_{L^{1}}\\
&\le C\|\p_{x}^{l}(u^{2}-v^{2})(\cdot, t)\|_{L^{1}}+C\|\p_{x}^{l}(u^{3}-v^{3})(\cdot, t)\|_{L^{1}}\\
&\le C\|\p_{x}^{l}((u+v)(u-v))(\cdot, t)\|_{L^{1}}+C\|\p_{x}^{l}((u-v)(u^{2}+uv+v^{2}))(\cdot, t)\|_{L^{1}}\\
&\le C\sum_{m=0}^{l}(\|\p_{x}^{l-m}u(\cdot, t)\|_{L^{2}}+\|\p_{x}^{l-m}v(\cdot, t)\|_{L^{2}})\|\p_{x}^{m}(u-v)(\cdot, t)\|_{L^{2}} \\
&\ \ \ \ +C\|\p_{x}^{l}(u-v)(\cdot, t)\|_{L^{2}}(\|u(\cdot, t)\|_{L^{2}}\|u(\cdot, t)\|_{L^{\infty}}\\
&\ \ \ \ +\|u(\cdot, t)\|_{L^{2}}\|v(\cdot, t)\|_{L^{\infty}}+\|v(\cdot, t)\|_{L^{2}}\|v(\cdot, t)\|_{L^{\infty}})\\
&\ \ \ \ +C\sum_{m=0}^{l-1}\sum_{n=0}^{l-m}\|\p_{x}^{m}(u-v)(\cdot, t)\|_{L^{\infty}}(\|\p_{x}^{n}u(\cdot, t)\|_{L^{2}}\|\p_{x}^{l-m-n}u(\cdot, t)\|_{L^{2}}\\
&\ \ \ \ +\|\p_{x}^{n}u(\cdot, t)\|_{L^{2}}\|\p_{x}^{l-m-n}v(\cdot, t)\|_{L^{2}}+\|\p_{x}^{n}v(\cdot, t)\|_{L^{2}}\|\p_{x}^{l-m-n}v(\cdot, t)\|_{L^{2}})\\
&\le C\sum_{m=0}^{l}(\|u\|_{X}+\|v\|_{X})(1+t)^{-1/4-(l-m)/2}\|u-v\|_{X}(1+t)^{-1/4-m/2}\\
&\ \ \ \ +C\|u-v\|_{X}(1+t)^{-1/4-l/2}(\|u\|_{X}^{2}+\|u\|_{X}\|v\|_{X}+\|v\|_{X}^{2})(1+t)^{-1/4}(1+t)^{-1/2}\\
&\ \ \ \ +C\sum_{m=0}^{l-1}\sum_{n=0}^{l-m}\|u-v\|_{X}(1+t)^{-1/2-m/2}\\
&\ \ \ \ \times(\|u\|_{X}^{2}+\|u\|_{X}\|v\|_{X}+\|v\|_{X}^{2})(1+t)^{-1/4-n/2}(1+t)^{-1/4-(l-m-n)/2}\\
&\le C(\|u\|_{X}+\|v\|_{X})\|u-v\|_{X}(1+t)^{-1/2-l/2}.
\end{split}
\end{align*}

Now we prove \eqref{G8} and \eqref{G9}. Using \eqref{G5}, we obtain 
\begin{equation}\label{G13}
(N[u]-N[v])(t)=-\int_{0}^{t}(\p_{x}S(t-\tau))*(f(u)-f(v))(\tau)d\tau \equiv I(x, t). 
\end{equation}
By Plancherel's Theorem, we have
\begin{align}\label{G14}
\begin{split}
\|\p_{x}^{l}I(\cdot, t)\|_{L^{2}} &\le \|(i\xi)^{l}\hat{I}(\xi, t)\|_{L^{2}(|\xi|\le1)}+\|(i\xi)^{l}\hat{I}(\xi, t)\|_{L^{2}(|\xi|\ge1)}\equiv I_{1}+I_{2}. 
\end{split}
\end{align}
Since 
\begin{equation*}
\int_{|\xi|\le1}|\xi|^{j}e^{-2(t-\tau)|\xi|^{2}}d\xi \le C(1+t-\tau)^{-j/2-1/2}, \ \ j\ge0, 
\end{equation*}
and \eqref{G11}, we have
\begin{align}\label{G15}
\begin{split}
I_{1}&\le C\int_{0}^{t}\|(i\xi)^{l+1}e^{-(t-\tau)|\xi|^{2}+ik(t-\tau)\xi^{3}}\mathcal{F}[f(u)-f(v)](\xi, \tau)\|_{L^{2}(|\xi|\le1)}d\tau\\
&\le C\int_{0}^{t/2}\sup_{|\xi|\le1}|\mathcal{F}[f(u)-f(v)](\xi, \tau)|\biggl(\int_{|\xi|\le1}|\xi|^{2(l+1)}e^{-2(t-\tau)|\xi|^{2}}d\xi \biggl)^{1/2}d\tau\\
&\ \ \ \ +C\int_{t/2}^{t}\sup_{|\xi|\le1}|(i\xi)^{l}\mathcal{F}[f(u)-f(v)](\xi, \tau)|\biggl(\int_{|\xi|\le1}|\xi|^{2}e^{-2(t-\tau)|\xi|^{2}}d\xi \biggl)^{1/2}d\tau\\
&\le C\int_{0}^{t/2}(1+t-\tau)^{-3/4-l/2}\|(f(u)-f(v))(\tau)\|_{L^{1}}d\tau\\
&\ \ \ \ +C\int_{t/2}^{t}(1+t-\tau)^{-3/4}\|\p_{x}^{l}(f(u)-f(v))(\tau)\|_{L^{1}}d\tau\\
&\le C(\|u\|_{X}+\|v\|_{X})\|u-v\|_{X}\biggl(\int_{0}^{t/2}(1+t-\tau)^{-3/4-l/2}(1+\tau)^{-1/2}d\tau\\
&\ \ \ \ +\int_{t/2}^{t}(1+t-\tau)^{-3/4}(1+\tau)^{-1/2-l/2}d\tau\biggl)\\
&\le C(\|u\|_{X}+\|v\|_{X})\|u-v\|_{X}(1+t)^{-1/4-l/2}.
\end{split}
\end{align}
For $|\xi|\ge1$, by using the Schwarz inequality, we have 
\begin{align*}
\begin{split}
&|(i\xi)^{l}\hat{I}(\xi, t)|\\
&=\biggl|(i\xi)^{l+1}\int_{0}^{t}e^{-(t-\tau)|\xi|^{2}+ik(t-\tau)\xi^{3}}\mathcal{F}[f(u)-f(v)](\xi, \tau)d\tau\biggl| \\
&\le C\int_{0}^{t}|\xi|e^{-(t-\tau)|\xi|^{2}}|(i\xi)^{l}\mathcal{F}[f(u)-f(v)](\xi, \tau)|d\tau \\
&\le C\biggl(\int_{0}^{t}|\xi|^{2}e^{-(t-\tau)|\xi|^{2}}d\tau \biggl)^{1/2}\biggl(\int_{0}^{t}e^{-(t-\tau)|\xi|^{2}}|(i\xi)^{l}\mathcal{F}[f(u)-f(v)](\xi, \tau)|^{2}d\tau \biggl)^{1/2} \\
&\le C\biggl(\int_{0}^{t}e^{-(t-\tau)|\xi|^{2}}|(i\xi)^{l}\mathcal{F}[f(u)-f(v)](\xi, \tau)|^{2}d\tau \biggl)^{1/2}.
\end{split}
\end{align*}
Therefore we have from \eqref{G12}
\begin{align}\label{G16}
\begin{split}
I_{2}&\le C\biggl(\int_{|\xi|\ge1}\int_{0}^{t}e^{-(t-\tau)|\xi|^{2}}|(i\xi)^{l}\mathcal{F}[f(u)-f(v)](\xi, \tau)|^{2}d\tau d\xi\biggl)^{1/2} \\
&\le C\biggl(\int_{0}^{t}e^{-(t-\tau)}\int_{|\xi|\ge1}|(i\xi)^{l}\mathcal{F}[f(u)-f(v)](\xi, \tau)|^{2}d\xi d\tau \biggl)^{1/2} \\
&\le C\biggl(\int_{0}^{t}e^{-(t-\tau)}\|\p_{x}^{l}(f(u)-f(v))(\tau)\|_{L^{2}}^{2}d\tau \biggl)^{1/2} \\
&\le C(\|u\|_{X}+\|v\|_{X})\|u-v\|_{X}\biggl(\int_{0}^{t}e^{-(t-\tau)}(1+\tau)^{-3/2-l}d\tau \biggl)^{1/2} \\
&\le C(\|u\|_{X}+\|v\|_{X})\|u-v\|_{X}(1+t)^{-3/4-l/2}. 
\end{split}
\end{align}
Combining \eqref{G13} through \eqref{G16}, we obtain 
\begin{equation*}
\|\p_{x}^{l}(N[u]-N[v])(t)\|_{L^{2}}\le C(\|u\|_{X}+\|v\|_{X})\|u-v\|_{X}(1+t)^{-1/4-l/2}, \ \ t\ge0
\end{equation*}
for $0\le l \le s$. Thus, there exists a positive constant $C_{1}>0$ such that 
\begin{equation*}
\|N[u]-N[v]\|_{X}\le C_{1}(\|u\|_{X}+\|v\|_{X})\|u-v\|_{X}\le 4C_{0}C_{1}E_{s}\|u-v\|_{X}, \ \ u, v \in Y.
\end{equation*}
Choosing $E_{s}$, which satisfies $4C_{0}C_{1}E_{s} \le1/2$, then we have \eqref{G9}. Moreover, taking $v=0$ in \eqref{G9}, it follows that 
\begin{equation*}
\|N[u]-N[0]\|_{X}\le C_{0}E_{s}.
\end{equation*}
Since $N[0]=N_{0}$, we obtain from \eqref{G7} that 
\begin{equation*}
\|N[u]\|_{X}\le \|N_{0}\|_{X}+\|N[u]-N[0]\|_{X} \le 2C_{0}E_{s}.
\end{equation*}
Therefore, we get \eqref{G8}. This completes the proof of the global existence and $L^{2}$-dacay estimate \eqref{2-1}. 

Finally, from Young's inequality and Lemma \ref{Green1}, \eqref{2-1}, \eqref{G6} and \eqref{G11}, we have
\begin{align*}
\begin{split}
\|\p_{x}^{l}u(\cdot, t)\|_{L^{1}}&\le\|\p_{x}^{l}(S(t)*u_{0})\|_{L^{1}}+\int_{0}^{t/2}\|(\p_{x}^{l+1}S(t-\tau))*f(u)(\tau)\|_{L^{1}}d\tau\\
&\ \ \ \ +\int_{t/2}^{t}\|(\p_{x}S(t-\tau))*(\p_{x}^{l}f(u))(\tau)\|_{L^{1}}d\tau \\
&\le \|\p_{x}^{l}S(\cdot, t)\|_{L^{1}}\|u_{0}\|_{L^{1}}+\int_{0}^{t/2}\|\p_{x}^{l+1}S(t-\tau)\|_{L^{1}}\|f(u)(\tau)\|_{L^{1}}d\tau\\
&\ \ \ \ +\int_{t/2}^{t}\|\p_{x}S(t-\tau)\|_{L^{1}}\|(\p_{x}^{l}f(u))(\tau)\|_{L^{1}}d\tau \\
&\le CE_{s}t^{-l/2}(1+t^{-1/4})\\
&\ \ \ \ +CE_{s}\int_{0}^{t/2}(t-\tau)^{-1/2-l/2}(1+(t-\tau)^{-1/4})(1+\tau)^{-1/2}d\tau \\
&\ \ \ \ +CE_{s}\int_{t/2}^{t}(t-\tau)^{-1/2}(1+(t-\tau)^{-1/4})(1+\tau)^{-1/2-l/2}d\tau \\
&\le CE_{s}\biggl(t^{-l/2}(1+t^{-1/4})+t^{-1/2-l/2}(1+t^{-1/4})t^{1/2}\\
&\ \ \ \ +(1+t)^{-1/2-l/2}(t^{1/2}+t^{1/4})\biggl)\\
&\le CE_{s}t^{-l/2}(1+t^{-1/4}), \ \ t>0. 
\end{split}
\end{align*}
Thus we get \eqref{2-2}. This completes the proof. 
\end{proof}

Next, we consider the nonlinear diffusion wave $\chi(x, t)$ defined by \eqref{1-4}, and the heat kernel $G(x, t)$ defined by \eqref{1-21}. A direct calculation yields
\begin{equation}\label{2-4}
|\chi(x, t)| \le C|\delta|(1+t)^{-1/2}e^{-x^{2}/4(1+t)}, \ t\ge0, \ x\in \R.
\end{equation}
Moreover, $\chi(x, t)$ and $G(x, t)$ satisfy the following estimates (for the proof, see e.g. Lemma 4.3 in~\cite{Kato and Ueda}).
\begin{lem}\label{lem2-2}
Let $l$ and $m$ be non-negative integers. Then, for $p\in[1, \infty]$, we have
\begin{align}
\| \p_{x}^{l}\p_{t}^{m}\chi(\cdot, t)\|_{L^{p}}&\le C|\delta|(1+t)^{-(1/2)(1-1/p)-l/2-m}, \ \ t\ge0, \label{2-5}\\
\| \p_{x}^{l}\p_{t}^{m}G(\cdot, t)\|_{L^{p}}&\le Ct^{-(1/2)(1-1/p)-l/2-m}, \ \ t>0. \label{2-6}
\end{align}
\end{lem}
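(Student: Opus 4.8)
The plan is to exploit the self-similar (parabolic-scaling) structure of both $G$ and $\chi$, reducing every estimate to a change of variables together with the fact that the underlying profiles and all their derivatives lie in the Schwartz class.

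I would begin with the heat kernel $G$, which is the cleaner case. Writing $G(x,t)=t^{-1/2}G_{*}(x/\sqrt{t})$ with $G_{*}(y)=(4\pi)^{-1/2}e^{-y^{2}/4}$, I first use the fact that $G$ solves the heat equation $\p_{t}G=\p_{x}^{2}G$ to trade each time derivative for two spatial derivatives: $\p_{x}^{l}\p_{t}^{m}G=\p_{x}^{\,l+2m}G$. The scaling relation then gives $\p_{x}^{\,l+2m}G(x,t)=t^{-(l+2m+1)/2}(\p^{\,l+2m}G_{*})(x/\sqrt{t})$. Taking the $L^{p}$ norm and substituting $y=x/\sqrt{t}$ (so $dx=\sqrt{t}\,dy$) produces exactly one extra factor $t^{1/(2p)}$, hence
\begin{equation*}
\|\p_{x}^{l}\p_{t}^{m}G(\cdot,t)\|_{L^{p}}=t^{-(l+2m+1)/2+1/(2p)}\|\p^{\,l+2m}G_{*}\|_{L^{p}}=C\,t^{-(1/2)(1-1/p)-l/2-m},
\end{equation*}
where $C=\|\p^{\,l+2m}G_{*}\|_{L^{p}}<\infty$ for every $p\in[1,\infty]$ because $G_{*}\in\mathcal{S}(\R)$. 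This settles \eqref{2-6}.

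For $\chi$ the same scaling is available, $\chi(x,t)=(1+t)^{-1/2}\chi_{*}(x/\sqrt{1+t})$, but since $\chi$ solves Burgers rather than the heat equation I cannot convert $\p_{t}$ into $\p_{x}^{2}$; instead I treat the time derivatives directly. The key observation is that differentiation in $t$ preserves the self-similar form: for $g(x,t)=(1+t)^{-a}h(x/\sqrt{1+t})$ one computes, with $\xi=x/\sqrt{1+t}$,
\begin{equation*}
\p_{t}g(x,t)=(1+t)^{-a-1}\Bigl(-a\,h(\xi)-\tfrac{1}{2}\,\xi h'(\xi)\Bigr).
\end{equation*}
Thus if $h$ is Schwartz then so is the new profile $-ah-\tfrac{1}{2}\xi h'$, and it is again a linear function of $h$. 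Starting from $\chi_{*}$ and iterating, I obtain by induction $\p_{t}^{m}\chi(x,t)=(1+t)^{-1/2-m}H_{m}(x/\sqrt{1+t})$ with $H_{m}\in\mathcal{S}(\R)$; applying $\p_{x}^{l}$ and rescaling as above yields \eqref{2-5} with the constant $\|\p^{l}H_{m}\|_{L^{p}}$.

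It remains to recover the factor $|\delta|$, which I regard as the main point requiring care. From the explicit formula \eqref{1-5}, the denominator of $\chi_{*}$ stays uniformly bounded away from $0$ for small $\delta$, while the numerator carries the factor $e^{b\delta/2}-1=O(|\delta|)$; hence $\chi_{*}$, and by the same token each of its derivatives, is bounded by $C|\delta|$ times a Gaussian. Since the transformation $h\mapsto-ah-\tfrac{1}{2}\xi h'$ is linear, every profile $H_{m}$ inherits this linear dependence, so $\|\p^{l}H_{m}\|_{L^{p}}\le C|\delta|$, uniformly in $m,l$ over the relevant finite ranges. This produces the claimed prefactor and completes \eqref{2-5}. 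The only genuine obstacle is verifying that each $H_{m}$ is rapidly decreasing and $O(|\delta|)$; both follow from the explicit structure of $\chi_{*}$ and the fact that multiplication by $\xi$ and differentiation preserve the Schwartz class, so no new ideas beyond careful bookkeeping are needed.
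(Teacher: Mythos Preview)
Your argument is correct. The paper does not supply its own proof of this lemma; it simply records the estimates and refers the reader to Lemma~4.3 of \cite{Kato and Ueda}. Your approach---exploiting the self-similar structure $G(x,t)=t^{-1/2}G_{*}(x/\sqrt{t})$ and $\chi(x,t)=(1+t)^{-1/2}\chi_{*}(x/\sqrt{1+t})$, together with the observation that $\p_{t}$ maps a profile of weight $(1+t)^{-a}$ to one of weight $(1+t)^{-a-1}$ via the linear operator $h\mapsto -ah-\tfrac{1}{2}\xi h'$---is the standard direct route and yields exactly the stated exponents after the change of variables in the $L^{p}$ norm. For $G$ the shortcut $\p_{t}G=\p_{x}^{2}G$ is clean; for $\chi$ your inductive description of the profiles $H_{m}$ is the right substitute, and the linearity of the recursion is precisely what propagates the $O(|\delta|)$ bound from $\chi_{*}$ to every $H_{m}$.

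One small remark: the bound $|\chi_{*}^{(j)}(x)|\le C|\delta|e^{-x^{2}/4}$ with a constant independent of $\delta$ genuinely requires $\delta$ to stay in a bounded set (the numerator $e^{b\delta/2}-1$ is not $O(|\delta|)$ globally). You noted this by restricting to small $\delta$, which is consistent with the standing smallness assumption on $u_{0}$ in the paper; it would be worth saying explicitly that $C$ may depend on an upper bound for $|\delta|$, as the paper itself does elsewhere (cf.\ the hypothesis $|\delta|\le 1$ in Lemma~\ref{lem2-3}).
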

Next, we treat the function $\eta(x, t)$ defined by \eqref{1-21}. For this function, it follows that 
\begin{align}
&\min \{1, e^{b\delta/2}\} \le \eta(x, t) \le \max \{1, e^{b\delta/2}\}, \label{2-7}\\ 
&\min \{1, e^{-b\delta/2}\} \le \eta(x, t)^{-1} \le \max \{1, e^{-b\delta/2}\}. \label{2-8}
\end{align}
Moreover, we have the following estimate (for the proof, see Corollary 2.3 in~\cite{Kato} or Lemma 5.4 in~\cite{Kato and Ueda}).
\begin{lem}\label{lem2-3}
Let $l$ be a positive integer and $p\in[1, \infty]$. If $|\delta| \le1$, then we have 
\begin{align}\label{2-9}
\| \p^{l}_{x}\eta(\cdot, t)\|_{L^{p}}+\| \p^{l}_{x}(\eta(\cdot, t)^{-1})\|_{L^{p}}&\le C|\delta|(1+t)^{-(1/2)(1-1/p)-l/2+1/2}, \ \ t\ge0.
\end{align}
\end{lem}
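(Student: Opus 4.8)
The plan is to reduce everything to the decay estimates for $\chi$ in Lemma~\ref{lem2-2} together with the pointwise bounds \eqref{2-7}--\eqref{2-8}. Writing $\eta(x,t)=e^{g(x,t)}$ with $g(x,t)=\frac b2\int_{-\infty}^{x}\chi(y,t)\,dy$, the basic observation is that $\p_x g=\frac b2\chi$, hence $\p_x^{j}g=\frac b2\p_x^{j-1}\chi$ for $j\ge1$. I would then argue by induction (equivalently, via Fa\`a di Bruno's formula for $\p_x^{l}e^{g}$) that $\p_x^{l}\eta$ is a finite linear combination, with coefficients of the form $(b/2)^{r}$ times purely combinatorial constants, of terms
\[
\eta(x,t)\prod_{i=1}^{r}\p_x^{j_i-1}\chi(x,t),\qquad j_i\ge1,\ \ \sum_{i=1}^{r}j_i=l,
\]
where $r\ge1$ since $l\ge1$. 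The same representation holds for $\eta^{-1}=e^{-g}$, with $\eta$ replaced by $\eta^{-1}$ in each term.

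Next I would estimate each such term in $L^{p}$. Since $\eta$ (resp. $\eta^{-1}$) is uniformly bounded by \eqref{2-7} (resp. \eqref{2-8}), it can be pulled out of the norm at the cost of a constant. For the remaining product I apply H\"older's inequality with the uniform choice of exponents $q_i=rp\in[1,\infty]$, which satisfies $\sum_i 1/q_i=1/p$ and is admissible for every $p\in[1,\infty]$ including the endpoints $p=1$ and $p=\infty$, giving
\[
\Bigl\|\prod_{i=1}^{r}\p_x^{j_i-1}\chi\Bigr\|_{L^{p}}\le\prod_{i=1}^{r}\|\p_x^{j_i-1}\chi\|_{L^{rp}}.
\]
Inserting \eqref{2-5} for each factor produces a bound $C|\delta|^{r}(1+t)^{-E}$ with $E=\sum_{i}\bigl[\tfrac12(1-\tfrac1{rp})+\tfrac{j_i-1}{2}\bigr]$. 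A short computation, using $\sum_i j_i=l$, yields $E=\tfrac l2-\tfrac1{2p}=(1/2)(1-1/p)+l/2-1/2$, which is exactly the exponent appearing in \eqref{2-9}. The key point is that $E$ is independent of the partition $\{j_i\}$, so summing over the finitely many Fa\`a di Bruno terms only affects the constant.

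Finally, since $r\ge1$ and $|\delta|\le1$, we have $|\delta|^{r}\le|\delta|$, which converts the factor $|\delta|^{r}$ into the single power of $|\delta|$ required by \eqref{2-9} and closes the estimate for $\p_x^{l}\eta$; the argument for $\p_x^{l}(\eta^{-1})$ is word-for-word the same. I do not expect a genuine obstacle here: the only care needed is the bookkeeping verifying the homogeneity $E=l/2-1/(2p)$ and checking admissibility of $q_i=rp$ at the endpoints, and the hypothesis $|\delta|\le1$ is used precisely to absorb the higher powers of $|\delta|$ arising from terms that are products of more than one copy of $\chi$ and its derivatives.
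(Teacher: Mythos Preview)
Your argument is correct. The paper itself does not give a proof of this lemma; it simply refers the reader to Corollary~2.3 in \cite{Kato} and Lemma~5.4 in \cite{Kato and Ueda}. Your approach---writing $\eta=e^{g}$ with $g_x=\tfrac{b}{2}\chi$, expanding $\p_x^{l}e^{g}$ by Fa\`a di Bruno, pulling out the bounded factor $\eta$ via \eqref{2-7}--\eqref{2-8}, applying H\"older with equal exponents $q_i=rp$, and invoking \eqref{2-5} for each factor---is the natural way to prove such an estimate and is essentially what those references do. The homogeneity computation $E=\sum_i\bigl[\tfrac12(1-\tfrac1{rp})+\tfrac{j_i-1}{2}\bigr]=\tfrac{l}{2}-\tfrac{1}{2p}$ is exactly right, and your use of $|\delta|\le1$ to replace $|\delta|^{r}$ by $|\delta|$ is precisely the point of that hypothesis.
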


In the rest of this section, we introduce an auxiliary problem. We set $\psi=u-\chi$, where $u(x, t)$ is the solution to \eqref{1-1} and $\chi(x, t)$ is the nonlinear diffusion wave defined by \eqref{1-4}. Then, the perturbation $\psi(x, t)$ satisfies the following equation:
\begin{align*}
\psi_{t}+(b\chi \psi)_{x}-\psi_{xx}=-\biggl(\frac{b}{2}\psi^{2}\biggl)_{x}-\biggl(\frac{c}{3}u^{3}\biggl)_{x}-ku_{xxx}.
\end{align*} 
To consider the above equation, we prepare the following auxiliary problem:
\begin{align}\label{2-10}
\begin{split}
z_{t}+(b\chi z)_{x}-z_{xx}&=\p_{x}\lambda(x, t)  , \ \ t>0, \ \ x\in \R, \\
z(x, 0)&=z_{0}(x) , \ \ x\in \R, 
\end{split}
\end{align}
where $\lambda(x, t)$ is a given regular function decaying fast enough at spatial infinity. If we set 
\begin{align}\label{2-11}
\begin{split}
U[h](x, t, \tau)\equiv\int_{\R}\p_{x}(G(x-y, t-\tau)\eta(x, t))(\eta(y, \tau))^{-1}\biggl(\int_{-\infty}^{y}h(\xi)d\xi\biggl)dy&,\\
0\le \tau<t, \ \ x\in \R&, 
\end{split}
\end{align}
then we have the following formula (for the proof, see Lemma 3.3 in~\cite{Fukuda} or Lemma 3.1 in~\cite{Kato}):
\begin{lem}\label{lem2-4}
Let $z_{0}(x)$ be a sufficiently regular function decaying at spatial infinity. Then we can get the smooth solution of  \eqref{2-10} which satisfies the following formula:  
\begin{equation}\label{2-12}
z(x, t)=U[z_{0}](x, t, 0)+\int_{0}^{t}U[\p_{x}\lambda(\tau)](x, t, \tau)d\tau, \ \ t>0, \ \ x\in \R.
\end{equation}
\end{lem}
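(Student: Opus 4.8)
The plan is to remove the transport term $(b\chi z)_{x}$ by a Cole--Hopf--type integrating factor and thereby reduce \eqref{2-10} to an inhomogeneous heat equation, whose solution is given explicitly by Duhamel's formula; the representation \eqref{2-12} then appears after transforming back. Two structural facts drive the computation: the identity $b\chi=2\eta_{x}/\eta$, which is immediate from $\eta=\exp\bigl(\tfrac{b}{2}\int_{-\infty}^{x}\chi\,dy\bigr)$ in \eqref{1-21}, and the fact that $\eta^{-1}=\exp\bigl(-\tfrac{b}{2}\int_{-\infty}^{x}\chi\,dy\bigr)$ solves the heat equation $\p_{t}(\eta^{-1})=\p_{x}^{2}(\eta^{-1})$. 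The latter is precisely the Cole--Hopf linearization of the Burgers equation \eqref{1-6} satisfied by $\chi$, and it is the reason the forthcoming cancellation occurs.

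First I would pass to the potential form of \eqref{2-10}. Setting $\Phi(x,t)\equiv\int_{-\infty}^{x}z(y,t)\,dy$ and integrating the equation in \eqref{2-10} from $-\infty$ to $x$, the decay of $z$, $z_{x}$, $\chi$ and $\lambda$ at $-\infty$ kills every boundary term and yields $\Phi_{t}+b\chi\Phi_{x}-\Phi_{xx}=\lambda$, together with $\Phi(x,0)=\int_{-\infty}^{x}z_{0}(y)\,dy$. Next I introduce $Z\equiv\eta^{-1}\Phi$, that is $\Phi=\eta Z$. Substituting $\Phi=\eta Z$ and $b\chi=2\eta_{x}/\eta$ into the potential equation, the first-order contributions cancel identically, and the coefficient multiplying $Z$ collapses to $\eta_{t}-\eta_{xx}+2\eta_{x}^{2}/\eta$, which vanishes exactly because $\eta^{-1}$ solves the heat equation. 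What remains is $Z_{t}-Z_{xx}=\eta^{-1}\lambda$, with $Z(x,0)=\eta(x,0)^{-1}\int_{-\infty}^{x}z_{0}(y)\,dy$.

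Finally, I would solve this inhomogeneous heat equation by Duhamel's formula against the kernel $G$ from \eqref{1-21}, obtaining $Z=G(t)*Z(\cdot,0)+\int_{0}^{t}G(t-\tau)*\bigl(\eta(\tau)^{-1}\lambda(\tau)\bigr)\,d\tau$, and then recover $z=\Phi_{x}=\p_{x}(\eta Z)$. Using $\int_{-\infty}^{y}\p_{\xi}\lambda(\xi,\tau)\,d\xi=\lambda(y,\tau)$ and comparing with the definition \eqref{2-11} of $U$, the first term is exactly $U[z_{0}](x,t,0)$ and the $\tau$-integrand is exactly $U[\p_{x}\lambda(\tau)](x,t,\tau)$, which is the claimed formula \eqref{2-12}.

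The step demanding the most care is the algebraic cancellation in the integrating-factor substitution: identifying the zeroth-order coefficient as $\eta_{t}-\eta_{xx}+2\eta_{x}^{2}/\eta$ and checking that it vanishes is the heart of the matter, and it is exactly where the Burgers structure of $\chi$ enters. The remaining points---justifying the vanishing of the boundary terms at $-\infty$, differentiating under the integral sign, and exchanging $\p_{x}$ with the convolution---are routine consequences of the smoothness and spatial decay assumed on $z_{0}$ and $\lambda$.
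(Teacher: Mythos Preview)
Your argument is correct: passing to the potential $\Phi$, conjugating by $\eta$ to reduce to the heat equation, and then applying Duhamel's formula is exactly the mechanism behind the formula \eqref{2-12}, and your verification that the zeroth-order coefficient $\eta_{t}-\eta_{xx}+2\eta_{x}^{2}/\eta$ vanishes (equivalently, that $\eta^{-1}$ solves the heat equation) is the crucial cancellation.

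As for comparison: the paper does not actually prove Lemma~\ref{lem2-4} here but defers to Lemma~3.3 in \cite{Fukuda} and Lemma~3.1 in \cite{Kato}. The Cole--Hopf integrating-factor reduction you carry out is the standard route in those references as well, so your proposal is in line with the intended proof and nothing further needs to be said.
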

This explicit representation formula \eqref{2-12} plays an important roles in the proof of Theorem \ref{main2}. Especially, for the second term of \eqref{2-12}, we have the following estimate:
\begin{lem}\label{lem2-5}
Assume that $|\delta| \le1$. Then the following estimate 
\begin{align}\label{2-13}
\begin{split}
&\biggl\| \int_{0}^{t}U[\p_{x} \lambda(\tau)](\cdot, t, \tau)d\tau\biggl\|_{L^{\infty}} \\
&\le C\sum_{n=0}^{1}(1+t)^{-1/2+n/2}\\
&\ \ \ \ \times \biggl(\int_{0}^{t/2}(t-\tau)^{-1/2-n/2}\| \lambda(\cdot, \tau)\|_{L^{1}}d\tau+\int_{t/2}^{t}(t-\tau)^{-n/2}\| \lambda(\cdot, \tau)\|_{L^{\infty}}d\tau \biggl)
\end{split}
\end{align}
holds for $t>0$. 
\end{lem}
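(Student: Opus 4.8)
The plan is to first collapse the inner primitive in the definition \eqref{2-11}. Since $\lambda(\cdot,\tau)$ decays fast enough at spatial infinity, taking $h=\p_x\lambda(\tau)$ gives $\int_{-\infty}^{y}\p_{\xi}\lambda(\xi,\tau)\,d\xi=\lambda(y,\tau)$, so the triple integral simplifies to
\[
U[\p_x\lambda(\tau)](x,t,\tau)=\int_{\R}\p_{x}\bigl(G(x-y,t-\tau)\eta(x,t)\bigr)(\eta(y,\tau))^{-1}\lambda(y,\tau)\,dy.
\]
Expanding by the product rule $\p_x\bigl(G(x-y,t-\tau)\eta(x,t)\bigr)=(\p_xG(x-y,t-\tau))\,\eta(x,t)+G(x-y,t-\tau)\,\p_x\eta(x,t)$ splits this into two pieces: one carrying a derivative on the heat kernel together with the bounded factor $\eta(x,t)$, and one carrying no kernel derivative but a factor $\p_x\eta(x,t)$. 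Pulling the $x$-dependent factors $\eta(x,t)$ and $\p_x\eta(x,t)$ out of the spatial supremum and writing the remaining $y$-integrals as the convolutions $(\p_xG(\cdot,t-\tau))*(\eta^{-1}\lambda)(\cdot,\tau)$ and $G(\cdot,t-\tau)*(\eta^{-1}\lambda)(\cdot,\tau)$, I would estimate each piece by Young's inequality.

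The quantitative inputs are then the pointwise bounds \eqref{2-7}--\eqref{2-8}, which give $\|\eta(\cdot,t)\|_{L^\infty}\le C$ and $\|(\eta(\cdot,\tau))^{-1}\|_{L^\infty}\le C$; Lemma \ref{lem2-3}, which gives $\|\p_x\eta(\cdot,t)\|_{L^\infty}\le C|\delta|(1+t)^{-1/2}\le C(1+t)^{-1/2}$; and the kernel estimates \eqref{2-6}, which supply $\|\p_xG(\cdot,t-\tau)\|_{L^\infty}\le C(t-\tau)^{-1}$, $\|G(\cdot,t-\tau)\|_{L^\infty}\le C(t-\tau)^{-1/2}$, $\|\p_xG(\cdot,t-\tau)\|_{L^1}\le C(t-\tau)^{-1/2}$, and $\|G(\cdot,t-\tau)\|_{L^1}\le C$. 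The point is that the factor $\p_x\eta$ is precisely what produces the $(1+t)^{-1/2}$ weight of the $n=0$ term on the right-hand side, while the $\eta$-bounded piece carrying the kernel derivative produces the $(1+t)^{0}$ weight of the $n=1$ term.

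For the $\tau$-integration I would split $\int_0^t=\int_0^{t/2}+\int_{t/2}^t$ exactly as in the statement. On $[0,t/2]$, where $t-\tau\ge t/2$ is large and the kernels are non-singular, I apply Young in the form $\|K*g\|_{L^\infty}\le\|K\|_{L^\infty}\|g\|_{L^1}$ with $g=\eta^{-1}\lambda$, producing $\|\lambda(\cdot,\tau)\|_{L^1}$ together with the kernel powers $(t-\tau)^{-1}$ (for $\p_xG$) or $(t-\tau)^{-1/2}$ (for $G$), matching the first integrand. On $[t/2,t]$, where $t-\tau$ may be small, I instead use $\|K*g\|_{L^\infty}\le\|K\|_{L^1}\|g\|_{L^\infty}$, producing $\|\lambda(\cdot,\tau)\|_{L^\infty}$ together with the integrable kernel powers $(t-\tau)^{-1/2}$ (for $\p_xG$) or $(t-\tau)^{0}$ (for $G$), matching the second integrand. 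Collecting the two pieces over the two time regions and comparing the $(1+t)$- and $(t-\tau)$-weights reproduces exactly the double sum over $n=0,1$ in \eqref{2-13}.

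I do not expect a serious obstacle: once the inner primitive is collapsed, the argument is bookkeeping built on Young's inequality and the kernel bounds already in hand. The one point requiring care is matching the exponents of $(1+t)$ and $(t-\tau)$ correctly across the two pieces and the two regions — in particular, tracking that the $(1+t)^{-1/2}$ in the $n=0$ term comes from $\p_x\eta$ via Lemma \ref{lem2-3}, and choosing the correct Young splitting ($L^\infty\!*\!L^1$ versus $L^1\!*\!L^\infty$) in each time region so that the kernel exponents remain integrable in $\tau$.
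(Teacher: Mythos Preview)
Your proposal is correct and follows essentially the same route as the paper's proof: collapse the primitive to $\lambda(y,\tau)$, expand $\p_x(G\eta)$ by the product rule into the sum $\sum_{n=0}^{1}\p_x^{1-n}\eta\cdot\p_x^nG$, pull the $\eta$-factors outside using \eqref{2-7} and Lemma~\ref{lem2-3}, split the $\tau$-integral at $t/2$, and apply Young's inequality with the $L^\infty$--$L^1$ pairing on $[0,t/2]$ and the $L^1$--$L^\infty$ pairing on $[t/2,t]$ together with the heat-kernel bounds \eqref{2-6}. The bookkeeping you describe (the $(1+t)^{-1/2}$ weight from $\p_x\eta$ giving the $n=0$ term, the bounded $\eta$ giving the $n=1$ term) matches the paper exactly.
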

\begin{proof}
From \eqref{2-11}, we obtain
\begin{align*}
\begin{split}
&\int_{0}^{t}U[\p_{x} \lambda(\tau)](x, t, \tau)d\tau\\
&=\int_{0}^{t}\int_{\R}\p_{x}(G(x-y, t-\tau)\eta(x, t))(\eta(y, \tau))^{-1}\lambda(y, \tau)dyd\tau \\
&=\sum_{n=0}^{1}\p_{x}^{1-n}\eta(x, t)\biggl(\int_{0}^{t/2}+\int_{t/2}^{t}\biggl)\int_{\R}\p_{x}^{n}G(x-y, t-\tau)(\eta(y, \tau))^{-1}\lambda(y, \tau)dyd\tau.
\end{split}
\end{align*}
By using Young's inequality, Lemma \ref{lem2-2}, Lemma \ref{lem2-3} and \eqref{2-8}, we obtain 
\begin{align*}
\begin{split}
&\biggl\| \int_{0}^{t}U[\p_{x} \lambda(\tau)](\cdot, t, \tau)d\tau\biggl\|_{L^{\infty}} \\
&\le C\sum_{n=0}^{1}\|\p_{x}^{1-n}\eta(\cdot, t)\|_{L^{\infty}}\biggl(\int_{0}^{t/2}\| \p_{x}^{n}G(t-\tau)*(\eta^{-1}\lambda)(\tau)\|_{L^{\infty}}d\tau\\
&\ \ \ \ +\int_{t/2}^{t}\|\p_{x}^{n}G(t-\tau)*(\eta^{-1}\lambda)(\tau)\|_{L^{\infty}}d\tau \biggl) \\
&\le C\sum_{n=0}^{1}(1+t)^{-1/2+n/2}\\
&\ \ \ \ \times \biggl(\int_{0}^{t/2}(t-\tau)^{-1/2-n/2}\| \lambda(\cdot, \tau)\|_{L^{1}}d\tau+\int_{t/2}^{t}(t-\tau)^{-n/2}\| \lambda(\cdot, \tau)\|_{L^{\infty}}d\tau \biggl).
\end{split}
\end{align*}
\end{proof}

\section{Proof of Theorem \ref{main1}}
In this section, we shall prove Theorem \ref{main1}. First, in order to obtain the upper bound estimates of $u-\chi$, we transform the equation \eqref{1-1} and \eqref{1-6} by using the Hopf-Cole transformation used in~\cite{Cole, Hopf, Matsumura and Nishihara}. We set 
\begin{equation}\label{3-1}
\rho (x, t)\equiv \exp \biggl(-\frac{b}{2}\int_{-\infty}^{x}u(y, t)dy\biggl), 
\end{equation}
then we have the following equation: 
\begin{equation}\label{3-2}
\rho_{t}-\rho_{xx}=\frac{b}{2}\rho \biggl(\frac{c}{3}u^{3}+ku_{xx}\biggl).
\end{equation}
Also, for $\eta(x, t)$ defined by \eqref{1-21}, we have
\begin{equation}\label{3-3}
(\eta^{-1})_{t}-(\eta^{-1})_{xx}=0.
\end{equation}
Therefore, if we set 
\begin{equation}\label{3-4}
w(x, t)=\rho(x, t)-\eta(x, t)^{-1}, 
\end{equation}
then we have the following initial value problem for $w(x, t)$ from \eqref{3-2} and \eqref{3-3}: 
\begin{align}\label{3-5}
\begin{split}
w_{t}-w_{xx}&=\frac{b}{2}\rho \biggl(\frac{c}{3}u^{3}+ku_{xx}\biggl), \ \ t>0,\ \ x\in \R, \\
w(x, 0)&=w_{0}(x), \ \ x\in \R, 
\end{split}
\end{align}
where $w_{0}(x)$ is defined by \eqref{1-7}. Our first step to prove Theorem \ref{main1} is to show the following proposition:
\begin{prop}\label{prop3-1}
Assume the same conditions on $u_{0}$ in Theorem \ref{main1} are valid. Then we have
\begin{align}\label{3-6}
\|\p_{x}^{l}w(\cdot, t)\|_{L^{\infty}}\le C\begin{cases}
(1+t)^{-(\alpha-1)/2-l/2}, &t\ge1, \ 1<\alpha<2, \\
(1+t)^{-1/2-l/2}\log(2+t), &t\ge1, \ \alpha=2
\end{cases}
\end{align}
for $l=0, 1$, where $w(x, t)$ is the solution to \eqref{3-5}. 
\end{prop}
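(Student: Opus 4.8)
The plan is to work from the Duhamel representation associated with the linear heat operator in \eqref{3-5}. Since $G$ in \eqref{1-21} is the fundamental solution of $\p_t-\p_x^2$, the solution $w$ of \eqref{3-5} satisfies
\[
w(t)=G(t)*w_0+\frac{b}{2}\int_0^t G(t-\tau)*\Bigl[\rho\Bigl(\frac{c}{3}u^3+ku_{xx}\Bigr)\Bigr](\tau)\,d\tau ,
\]
and I would estimate the linear term and the nonlinear (integral) term separately. The essential new feature of the slowly decaying regime is that $w_0\notin L^1(\R)$ in general, so the linear term cannot be controlled via $\|w_0\|_{L^1}$; its decay must instead be read off from the \emph{pointwise} decay of $w_0$. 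Note also that \eqref{3-6} is only claimed for $t\ge1$, so no small-time singularity of the heat kernel enters the linear part.

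First I would establish the pointwise bound $|w_0(x)|\le C(1+|x|)^{-(\alpha-1)}$. From \eqref{1-7}, $w_0$ is the difference of the two exponentials $\exp(-\frac{b}{2}\int_{-\infty}^x u_0)$ and $\exp(-\frac{b}{2}\int_{-\infty}^x\chi_*)=\eta_*^{-1}$; since $\int_\R u_0=\int_\R\chi_*=\delta$, the two arguments have the same limits as $x\to\pm\infty$, and the mean value theorem together with the boundedness of these arguments gives $|w_0(x)|\le C\bigl|\int_{-\infty}^x(u_0-\chi_*)\,dy\bigr|$. Bounding the tail $\int_x^\infty(u_0-\chi_*)\,dy$ for $x>0$ and $\int_{-\infty}^x(u_0-\chi_*)\,dy$ for $x<0$ by \eqref{1-2} and the Gaussian decay of $\chi_*$ then yields the claimed $(1+|x|)^{-(\alpha-1)}$ rate.

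Next I would use an elementary heat-semigroup estimate: if $|\phi(x)|\le C(1+|x|)^{-\gamma}$ with $0<\gamma\le1$, then $\|\p_x^l(G(t)*\phi)\|_{L^\infty}\le C(1+t)^{-\gamma/2-l/2}$ for $0<\gamma<1$ and $\le C(1+t)^{-1/2-l/2}\log(2+t)$ for $\gamma=1$. This is proved by transferring the $x$-derivatives onto the kernel (gaining the factor $t^{-l/2}$) and splitting $\int_\R \p_x^l G(x-y,t)(1+|y|)^{-\gamma}\,dy$ into the regions $|y|\le\sqrt t$ and $|y|>\sqrt t$. Applying it with $\gamma=\alpha-1$ and $\phi=w_0$ reproduces exactly the right-hand side of \eqref{3-6}; this is the dominant contribution.

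Finally, I would show the integral term is at most of the same order. By Proposition \ref{GE} and the uniform boundedness of $\rho$ (a consequence of the smallness of $\|u\|_{L^1}$), the source $F\equiv\rho(\frac{c}{3}u^3+ku_{xx})$ satisfies $\|F(\tau)\|_{L^\infty}\le CE_3(1+\tau)^{-3/2}$ and, for $\tau\ge1$, $\|F(\tau)\|_{L^1}\le CE_3(1+\tau)^{-1}$. Splitting $\int_0^t=\int_0^{t/2}+\int_{t/2}^t$ and using $L^1\to L^\infty$ bounds for $G$ on the first interval and $L^\infty\to L^\infty$ bounds on the second, the integral term is $O\bigl((1+t)^{-1/2-l/2}\log(2+t)\bigr)$, which is strictly subdominant for $1<\alpha<2$ and of the same size as the linear term at $\alpha=2$. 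I expect the main obstacle to be the contribution of $ku_{xx}$ near $\tau=0$: estimate \eqref{2-2} only provides $\|u_{xx}(\tau)\|_{L^1}\le CE_3\tau^{-1}(1+\tau^{-1/4})$, whose singularity is non-integrable when paired naively with the kernel. I would circumvent this by integrating by parts in $x$, writing $G(t-\tau)*u_{xx}=(\p_x^2 G(t-\tau))*u$ on $[0,t/2]$, so that the kernel smoothing $(t-\tau)^{-3/2}\approx t^{-3/2}$ compensates the only mildly singular $\|u(\tau)\|_{L^1}\le CE_3(1+\tau^{-1/4})$, while on $[t/2,t]$ one keeps $u_{xx}$ in $L^\infty$, where its norm is well behaved. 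Balancing the time singularities of the kernel near $\tau=t$ against those of the $u$-norms near $\tau=0$ across the two regions is the crux of the argument; once this is done, \eqref{3-6} follows for $l=0,1$.
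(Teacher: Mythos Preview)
Your approach is essentially the same as the paper's: Duhamel formula, the pointwise bound $|w_0(x)|\le C(1+|x|)^{-(\alpha-1)}$, heat-kernel splitting at $|y|\sim\sqrt t$ for the linear term, and split-interval estimates for the Duhamel integral. Two points need correction.

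First, the uniform bound on $\rho$ does not follow directly from ``smallness of $\|u\|_{L^1}$'': estimate \eqref{2-2} with $l=0$ only gives $\|u(\cdot,\tau)\|_{L^1}\le CE_3(1+\tau^{-1/4})$, which blows up as $\tau\to0^+$, so $|\rho|\le\exp(\tfrac{|b|}{2}\|u(\cdot,\tau)\|_{L^1})$ is not uniform in $\tau\ge0$. The paper instead runs a Gronwall argument on $\|w(\cdot,t)\|_{L^\infty}$, using \eqref{2-3} to bound $\|u^3\|_{L^\infty}+\|u_{xx}\|_{L^\infty}\le C(1+\tau)^{-3/2}$ (which \emph{is} integrable), and then reads off $\rho=w+\eta^{-1}$ bounded.

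Second, and more substantively, the dispersive source is $\rho\,u_{xx}$, not $u_{xx}$, so the identity you wrote, $G(t-\tau)*u_{xx}=(\p_x^2G(t-\tau))*u$, does not apply. When you integrate by parts in $y$, derivatives fall on $\rho$ too, and since $\rho_x=-(b/2)\rho u$ each integration by parts spawns a term with an extra factor of $u$. The paper integrates by parts \emph{once} on $[0,t/2]$,
\[
\int_\R G(x-y,t-\tau)\,\rho\, u_{yy}\,dy
=\int_\R(\p_xG)(x-y,t-\tau)\,\rho\, u_y\,dy
+\frac{b}{2}\int_\R G(x-y,t-\tau)\,\rho\, u\,u_y\,dy,
\]
and then pairs $\|\p_x^{l+1}G(t-\tau)\|_{L^\infty}\le C(t-\tau)^{-1-l/2}$ with $\|u_x(\tau)\|_{L^1}\le CE_3\,\tau^{-1/2}(1+\tau^{-1/4})$, whose singularity at $\tau=0$ is integrable. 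Your double integration by parts would also work once the $\rho$-commutator terms are tracked, but as written the step is incorrect.
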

\begin{proof}
Applying the Duhamel principle to \eqref{3-5}, we have the following integral equation:
\begin{align}\label{3-7}
\begin{split}
w(t)&=G(t)*w_{0}+\frac{bc}{6}\int_{0}^{t}G(t-\tau)*(\rho u^{3})(\tau)d\tau\\
&\ \ \ \ +\frac{bk}{2}\int_{0}^{t}G(t-\tau)*(\rho u_{xx})(\tau)d\tau\\
&\equiv I_{1}+I_{2}+I_{3}.
\end{split}
\end{align} 
Before evaluating $I_{1}$, $I_{2}$ and $I_{3}$, we prove the following estimate for the initial data $w_{0}(x)$:
\begin{equation}\label{3-8}
|w_{0}(x)|\le C(1+|x|)^{-(\alpha-1)}, \ \ x\in \R. 
\end{equation}
 By the mean value theorem, we obtain 
 \begin{equation*}
 |w_{0}(x)|\le C\ \biggl|\int_{-\infty}^{x}(u_{0}(y)-\chi_{*}(y))dy\biggl|.
 \end{equation*}
 If $x<0$, from \eqref{1-2} and \eqref{1-5}, we have
\begin{align*}
|w_{0}(x)|&\le C\int_{-\infty}^{x}(|u_{0}(y)|+|\chi_{*}(y)|)dy \\
&\le C\int_{-\infty}^{x}(1+|y|)^{-\alpha}dy+C\int_{-\infty}^{x}(1+|y|)^{-N}dy \ \ (\forall N\ge0) \\
&\le C\int_{-\infty}^{x}(1-y)^{-\alpha}dy\le C(1-x)^{-(\alpha-1)}=C(1+|x|)^{-(\alpha-1)}. 
\end{align*}
On the other hand, since $\int_{\R}u_{0}(x)dx=\int_{\R}\chi_{*}(x)dx=\delta$, if $x>0$, similarly we have
\begin{align*}
|w_{0}(x)|&\le C\biggl|\int_{-\infty}^{x}(u_{0}(y)-\chi_{*}(y))dy\biggl| \\
&=C\biggl|\int_{-\infty}^{x}u_{0}(y)dy-\int_{\R}u_{0}(y)dy-\int_{-\infty}^{x}\chi_{*}(y)dy+\int_{\R}\chi_{*}(y)dy\biggl| \\
&\le C\biggl|\int_{x}^{\infty}u_{0}(y)dy\biggl|+\biggl|\int_{x}^{\infty}\chi_{*}(y)dy\biggl| \\
&\le C\int_{x}^{\infty}(1+|y|)^{-\alpha}dy+C\int_{x}^{\infty}(1+|y|)^{-N}dy \ \ (\forall N\ge0) \\
&\le C\int_{x}^{\infty}(1+y)^{-\alpha}dy\le C(1+x)^{-(\alpha-1)}=C(1+|x|)^{-(\alpha-1)}. 
\end{align*}
Thus we get \eqref{3-8}. 

Next, we prove the uniform boundedness of $\rho(x, t)$: 
\begin{equation}\label{3-9}
\|\rho(\cdot, t)\|_{L^{\infty}} \le C, \ \ t\ge0.
\end{equation}
To prove \eqref{3-9}, it suffices to show 
\begin{equation}\label{3-10}
\|w(\cdot, t)\|_{L^{\infty}} \le C, \ \ t\ge0,  
\end{equation}
since $\rho=w+\eta^{-1}$ and \eqref{2-8}. By using \eqref{3-7}, \eqref{3-4}, Young's inequality, \\
Lemma \ref{lem2-2}, \eqref{3-8}, \eqref{2-8} and Proposition \ref{GE}, we have
\begin{align*}
\begin{split}
&\|w(\cdot, t)\|_{L^{\infty}}\\
&\le \|G(\cdot, t)\|_{L^{1}}\|w_{0}\|_{L^{\infty}}\\
&\ \ \ \ +C\int_{0}^{t}\|G(\cdot,t-\tau)\|_{L^{1}}(\|w(\cdot, \tau)\|_{L^{\infty}}+\|\eta(\cdot, \tau)^{-1}\|_{L^{\infty}})\|u^{3}(\cdot, \tau)\|_{L^{\infty}}d\tau \\
&\ \ \ \ +C\int_{0}^{t}\|G(\cdot,t-\tau)\|_{L^{1}}(\|w(\cdot, \tau)\|_{L^{\infty}}+\|\eta(\cdot, \tau)^{-1}\|_{L^{\infty}})\|u_{xx}(\cdot, \tau)\|_{L^{\infty}}d\tau \\
&\le C+C'\int_{0}^{t}\|w(\cdot, \tau)\|_{L^{\infty}}(1+\tau)^{-3/2}d\tau, \ \ t\ge0. 
\end{split}
\end{align*}
Therefore, by using Gronwall's Lemma, we obtain
\begin{equation*}
\|w(\cdot, t)\|_{L^{\infty}}\le C\exp \biggl(C' \int_{0}^{t}(1+\tau)^{-3/2}d\tau\biggl)\le C, \ \ t\ge0. 
\end{equation*}

Now, we start with evaluation of $I_{1}$, $I_{2}$ and $I_{3}$. From Lemma \ref{lem2-2}, we have
\begin{align}\label{3-11}
\begin{split}
&\int_{\R} |\p_{x}^{l}G(x-y, t)|(1+|y|)^{-(\alpha-1)}dy \\
&=C\biggl(\int_{|y|\ge \sqrt{1+t}-1}+\int_{|y|\le \sqrt{1+t}-1}\biggl)|\p_{x}^{l}G(x-y, t)|(1+|y|)^{-(\alpha-1)}dy \\
&\le C\biggl(\sup_{|y|\ge \sqrt{1+t}-1}(1+|y|)^{-(\alpha-1)}\biggl)\int_{|y|\ge \sqrt{1+t}-1}|\p_{x}^{l}G(x-y, t)|dy\\
&\ \ \ \ +C\biggl(\sup_{|y|\le \sqrt{1+t}-1}|\p_{x}^{l}G(x-y, t)|\biggl)\int_{|y|\le \sqrt{1+t}-1}(1+|y|)^{-(\alpha-1)}dy\\
&\le C(1+t)^{-(\alpha-1)/2}\|\p_{x}^{l}G(\cdot, t)\|_{L^{1}}+C\|\p_{x}^{l}G(\cdot, t)\|_{L^{\infty}}\int_{|y|\le \sqrt{1+t}-1}(1+|y|)^{-(\alpha-1)}dy\\
&\le C(1+t)^{-(\alpha-1)/2-l/2}+Ct^{-1/2-l/2}\int_{0}^{\sqrt{1+t}-1}(1+y)^{-(\alpha-1)}dy \\
&\le C(1+t)^{-(\alpha-1)/2-l/2}+Ct^{-1/2-l/2}\begin{cases}
(1+t)^{-(\alpha-1)/2+1/2}, &1<\alpha<2,  \\
\log(2+t), &\alpha=2, 
\end{cases}\\
&\le C\begin{cases}
(1+t)^{-(\alpha-1)/2-l/2}, &t\ge1, \ 1<\alpha<2, \\
(1+t)^{-1/2-l/2}\log(2+t), &t\ge1, \ \alpha=2.
\end{cases}
\end{split}
\end{align} 
Therefore, we obtain from \eqref{3-8} and \eqref{3-11}
\begin{align}\label{3-12}
\|\p_{x}^{l}I_{1}(\cdot, t)\|_{L^{\infty}}\le C\begin{cases}
(1+t)^{-(\alpha-1)/2-l/2}, &t\ge1, \ 1<\alpha<2, \\
(1+t)^{-1/2-l/2}\log(2+t), &t\ge1, \ \alpha=2 
\end{cases}
\end{align}
for $l=0, 1$. By Young's inequality, \eqref{3-9}, Lemma \ref{lem2-2} and Proposition \ref{GE}, we have
\begin{align}\label{3-13}
\begin{split}
&\|\p_{x}^{l}I_{2}(\cdot, t)\|_{L^{\infty}}\\
&\le C\biggl(\int_{0}^{t/2}+\int_{t/2}^{t}\biggl)\|\p_{x}^{l}G(t-\tau)*(\rho u^{3})(\tau)\|_{L^{\infty}}d\tau\\
&\le C\int_{0}^{t/2}\|\p_{x}^{l}G(\cdot, t-\tau)\|_{L^{\infty}}\|u^{3}(\cdot, \tau)\|_{L^{1}}d\tau\\
&\ \ \ \ +C\int_{t/2}^{t}\|\p_{x}^{l}G(\cdot, t-\tau)\|_{L^{1}}\|u^{3}(\cdot, \tau)\|_{L^{\infty}}d\tau\\
&\le C\int_{0}^{t/2}\|\p_{x}^{l}G(\cdot, t-\tau)\|_{L^{\infty}}\|u(\cdot, \tau)\|_{L^{\infty}}\|u(\cdot, t)\|_{L^{2}}^{2}d\tau\\
&\ \ \ \ +C\int_{t/2}^{t}\|\p_{x}^{l}G(\cdot, t-\tau)\|_{L^{1}}\|u^{3}(\cdot, \tau)\|_{L^{\infty}}d\tau\\
&\le C\int_{0}^{t/2}(t-\tau)^{-1/2-l/2}(1+\tau)^{-1}d\tau+C\int_{t/2}^{t}(t-\tau)^{-l/2}(1+\tau)^{-3/2}d\tau\\
&\le Ct^{-1/2-l/2}\log(2+t)+C(1+t)^{-1/2-l/2}\\
&\le C(1+t)^{-1/2-l/2}\log(2+t), \ \ t\ge1
\end{split}
\end{align}
for $l=0, 1$. Finally, we evaluate $I_{3}$. Since $\rho_{x}=-(b/2)\rho u$ from \eqref{3-1}, making the integration by parts for the first part of the integral, it follows that 
\begin{align}\label{3-14}
\begin{split}
I_{3}(x, t)&=\frac{bk}{2}\biggl(\int_{0}^{t/2}+\int_{t/2}^{t}\biggl)\int_{\R}G(x-y, t-\tau)\rho(y, \tau)u_{yy}(y, \tau)dyd\tau\\
&=\frac{bk}{2}\int_{0}^{t/2}\int_{\R}\p_{x}G(x-y, t-\tau)\rho(y, \tau)u_{y}(y, \tau)dyd\tau \\
&\ \ \ \ +\frac{b^{2}k}{4}\int_{0}^{t/2}\int_{\R}G(x-y, t-\tau)\rho(y, \tau)u(y, \tau)u_{y}(y, \tau)dyd\tau \\
&\ \ \ \ +\frac{bk}{2}\int_{t/2}^{t}\int_{\R}G(x-y, t-\tau)\rho(y, \tau)u_{yy}(y, \tau)dyd\tau.
\end{split}
\end{align}
Therefore, in the same way to get \eqref{3-13}, we have from \eqref{3-14}
\begin{align}\label{3-15}
\begin{split}
&\|\p_{x}^{l}I_{3}(\cdot, t)\|_{L^{\infty}}\\
&\le C\int_{0}^{t/2}\|\p_{x}^{l+1}G(t-\tau)*(\rho u_{x})(\tau)\|_{L^{\infty}}d\tau\\
&\ \ \ +C\int_{0}^{t/2}\|\p_{x}^{l}G(t-\tau)*(\rho uu_{x})(\tau)\|_{L^{\infty}}d\tau\\
&\ \ \ +C\int_{t/2}^{t}\|\p_{x}^{l}G(t-\tau)*(\rho u_{xx})(\tau)\|_{L^{\infty}}\\
&\le C\int_{0}^{t/2}\|\p_{x}^{l+1}G(\cdot, t-\tau)\|_{L^{\infty}}\|u_{x}(\cdot, \tau)\|_{L^{1}}d\tau \\
&\ \ \ +C\int_{0}^{t/2}\|\p_{x}^{l}G(\cdot, t-\tau)\|_{L^{\infty}}\|u(\cdot, \tau)\|_{L^{2}}\|u_{x}(\cdot, \tau)\|_{L^{2}}d\tau\\
&\ \ \ +C\int_{t/2}^{t}\|\p_{x}^{l}G(\cdot, t-\tau)\|_{L^{1}}\|u_{xx}(\cdot, t)\|_{L^{\infty}}d\tau\\
&\le C\int_{0}^{t/2}(t-\tau)^{-1-l/2}\tau^{-1/2}(1+\tau^{-1/4})d\tau+C\int_{0}^{t/2}(t-\tau)^{-1/2-l/2}(1+\tau)^{-1}d\tau\\
&\ \ \ \ +C\int_{t/2}^{t}(t-\tau)^{-l/2}(1+\tau)^{-3/2}d\tau \\
&\le C(1+t)^{-1/2-l/2}+C(1+t)^{-1/2-l/2}\log(2+t)+C(1+t)^{-1/2-l/2} \\
&\le C(1+t)^{-1/2-l/2}\log(2+t), \ \ t\ge1
\end{split}
\end{align}
for $l=0, 1$. Summing up \eqref{3-7}, \eqref{3-12},  \eqref{3-13} and  \eqref{3-15}, we obtain \eqref{3-6}. 
\end{proof}

\begin{proof}[\rm{\bf{End of the proof of Theorem \ref{main1}}}]
From \eqref{3-1} and \eqref{1-21}, it follows that 
\begin{equation*}
u=-\frac{2}{b}(\log \rho)_{x}, \ \ \chi=\frac{2}{b}(\log \eta)_{x}. 
\end{equation*}
Therefore, we have
\begin{align}\label{3-16}
\begin{split}
u-\chi&=-\frac{2}{b}(\log \rho+\log \eta)_{x}=-\frac{2}{b}(\log(w+\eta^{-1})+\log \eta)\\
&=-\frac{2}{b}(\log(\eta w+1))_{x}=-\frac{2}{b}\frac{\eta_{x}w+\eta w_{x}}{\eta w+1} \\
&=-\frac{2}{b}\frac{w}{w+\eta^{-1}}\frac{\eta_{x}}{\eta}-\frac{2}{b}\frac{w_{x}}{w+\eta^{-1}}=-\frac{w\chi}{\rho}-\frac{2}{b}\frac{w_{x}}{\rho}, 
\end{split}
\end{align}
since $\eta_{x}=(b/2)\eta \chi$ and $\rho=w+\eta^{-1}$. By using \eqref{2-2}, we have 
\begin{equation*}
\biggl|\int_{-\infty}^{x}u(y, t)dy\biggl| \ \le \|u(\cdot, t)\|_{L^{1}}\le C(1+t^{-1/4})\le C=:C_{0}, \ \ t\ge1.
\end{equation*}
Thus, from \eqref{3-1}, we get the lower bound estimate of $\rho(x, t)$:
\begin{equation}\label{3-17}
\inf_{t\ge1, x\in \R}\rho(x, t)\ge e^{-|b|C_{0}/2}.
\end{equation}
Therefore, using \eqref{3-16}, Lemma \ref{lem2-2}, Proposition \ref{prop3-1} and \eqref{3-17}, if $1<\alpha<2$, we have 
\begin{align}\label{3-18}
\begin{split}
\|u(\cdot, t)-\chi(\cdot, t)\|_{L^{\infty}}&\le C\|w(\cdot, t)\|_{L^{\infty}}\|\chi(\cdot, t)\|_{L^{\infty}}+C\|\p_{x}w(\cdot, t)\|_{L^{\infty}} \\
&\le C(1+t)^{-(\alpha-1)/2}(1+t)^{-1/2}+C(1+t)^{-\alpha/2} \\
&\le C(1+t)^{-\alpha/2}, \ t\ge1.
\end{split}
\end{align}
On the other hand, if $\alpha=2$, it follows that 
\begin{align}\label{3-19}
\begin{split}
\|u(\cdot, t)-\chi(\cdot, t)\|_{L^{\infty}}&\le C\|w(\cdot, t)\|_{L^{\infty}}\|\chi(\cdot, t)\|_{L^{\infty}}+C\|\p_{x}w(\cdot, t)\|_{L^{\infty}} \\
&\le C(1+t)^{-1/2}\log(2+t)(1+t)^{-1/2}\\
&\ \ \ \ +C(1+t)^{-1}\log(2+t) \\
&\le C(1+t)^{-1}\log(2+t), \ t\ge1.
\end{split}
\end{align}
For $0\le t \le1$, from Proposition \ref{GE} and Lemma \ref{lem2-2}, we obtain 
\begin{align}\label{3-20}
\begin{split}
\|u(\cdot, t)-\chi(\cdot, t)\|_{L^{\infty}}&\le C\|u(\cdot, t)\|_{L^{\infty}}+C\|\chi(\cdot, t)\|_{L^{\infty}} \\
&\le CE_{s}(1+t)^{-1/2}+C|\delta|(1+t)^{-1/2} \\
&\le C, \ \ 0\le t \le1. 
\end{split}
\end{align}
Combining \eqref{3-18} through \eqref{3-20}, we get \eqref{1-16} for all $t\ge0$. This completes the proof. 
\end{proof}

In the rest of this section, we will show the $L^{p}$-decay estimate of $u-\chi$ for $p\in[2, \infty]$. Especially, the following $L^{2}$-decay estimate will be used in the proof of Theorem \ref{main2}. 
\begin{prop}\label{prop3-2}
Assume the same conditions on $u_{0}$ in Theorem \ref{main1} are valid. Then, the following estimate holds:
\begin{equation}\label{3-21}
\|u(\cdot, t)-\chi(\cdot, t)\|_{L^{2}}\le C\begin{cases}
(1+t)^{-\alpha/2+1/4}, &t\ge0, \ 1<\alpha<2, \\
(1+t)^{-3/4}\log(2+t), &t\ge0, \ \alpha=2.
\end{cases}
\end{equation} 
\end{prop}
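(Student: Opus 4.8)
The plan is to mirror the structure used for the $L^\infty$ estimate in Theorem \ref{main1}, but now measuring everything in $L^2$. I would start from the factorization \eqref{3-16}, namely
\[
u-\chi=-\frac{w\chi}{\rho}-\frac{2}{b}\frac{w_x}{\rho},
\]
and from the lower bound \eqref{3-17} on $\rho$, valid for $t\ge1$. This reduces the estimate to controlling $\|w\chi\|_{L^2}$ and $\|w_x\|_{L^2}$. For the first term I would use H\"older's inequality together with the $L^\infty$-bound on $w$ from Proposition \ref{prop3-1} and the $L^2$-decay of $\chi$ from Lemma \ref{lem2-2} (estimate \eqref{2-5} with $p=2$), which gives $\|\chi(\cdot,t)\|_{L^2}\le C(1+t)^{-1/4}$. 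Combining with $\|w(\cdot,t)\|_{L^\infty}\le C(1+t)^{-(\alpha-1)/2}$ (resp. with the extra $\log$ when $\alpha=2$) yields a bound of the right order, namely $(1+t)^{-(\alpha-1)/2-1/4}=(1+t)^{-\alpha/2+1/4}$.

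The substantive part is the $L^2$-estimate of $w_x$, which will \emph{not} follow from the pointwise bound in \eqref{3-6} alone. Here I would go back to the integral equation \eqref{3-7} for $w$, differentiate once in $x$, and estimate each of $\partial_x I_1,\partial_x I_2,\partial_x I_3$ in $L^2$. For $\partial_x I_1=\partial_x G(t)*w_0$ the difficulty is that $w_0$ is only known to satisfy the slow pointwise decay \eqref{3-8}, so $w_0\notin L^1$ in general when $1<\alpha<2$; I would therefore not use Young's inequality with $\|w_0\|_{L^1}$ but instead estimate $\|\partial_x G(t)*w_0\|_{L^2}$ directly by splitting the spatial integral at $|y|\le\sqrt{1+t}-1$ exactly as in \eqref{3-11}, now with an $L^2$-norm of the kernel in place of $L^1$/$L^\infty$. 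For $\partial_x I_2$ and $\partial_x I_3$ I would use the same splitting of the time integral at $t/2$ as in \eqref{3-13} and \eqref{3-15}, distributing one $x$-derivative onto the heat kernel and the others onto the nonlinear source, and invoking the $L^1$, $L^2$, $L^\infty$ decay rates for $u$ and its derivatives from Proposition \ref{GE}. Because these cubic and dispersive source terms already decay fast (they produce the $\log$-type $t^{-1/2-l/2}$ rates seen in \eqref{3-13}–\eqref{3-15}), they will be subdominant to the $I_1$ contribution once we convolve into $L^2$, so the slow-decay term $\partial_x I_1$ sets the rate.

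The main obstacle, then, is the $L^2$-convolution estimate $\|\partial_x G(t)*w_0\|_{L^2}$ for slowly decaying $w_0$. The clean route is to bound it by
\[
\|\partial_x G(t)*w_0\|_{L^2}\le\|\partial_x G(\cdot,t)\|_{L^1}\,\|w_0\|_{L^2}
\]
only when $w_0\in L^2$; since \eqref{3-8} gives $|w_0(x)|\le C(1+|x|)^{-(\alpha-1)}$, we have $w_0\in L^2$ precisely when $2(\alpha-1)>1$, i.e.\ $\alpha>3/2$, which does not cover the whole range $1<\alpha\le 2$. To handle $1<\alpha\le 3/2$ I would instead estimate $\|\partial_x G(t)*w_0\|_{L^2}$ by interpolating, or more directly by writing $\|\partial_x G(t)*w_0\|_{L^2}\le C\,\|\,|\partial_x G(\cdot,t)|*(1+|\cdot|)^{-(\alpha-1)}\,\|_{L^2}$ and carrying out the near/far split in $y$ as in \eqref{3-11} but tracking the $L^2$-norm in $x$; the far region contributes the factor $(1+t)^{-(\alpha-1)/2}$ while the near region contributes $\|\partial_x G(\cdot,t)\|_{L^2}\sim t^{-1/2-1/4}$ times the length $\sqrt{1+t}$ of the near region, giving the same $(1+t)^{-\alpha/2+1/4}$ (with the borderline $\log$ at $\alpha=2$). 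Once this convolution lemma is in hand, assembling $\|w_x(\cdot,t)\|_{L^2}\le C(1+t)^{-\alpha/2+1/4}$ (resp. with a $\log$), feeding it through \eqref{3-16} with \eqref{3-17}, and handling the short-time interval $0\le t\le 1$ by the crude bound $\|u-\chi\|_{L^2}\le\|u\|_{L^2}+\|\chi\|_{L^2}\le C$ from Proposition \ref{GE} and Lemma \ref{lem2-2} as in \eqref{3-20}, yields \eqref{3-21} for all $t\ge0$.
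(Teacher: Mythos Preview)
Your overall strategy coincides with the paper's: reduce to an $L^2$ bound on $\partial_x w$ via \eqref{3-16}--\eqref{3-17}, return to the Duhamel formula \eqref{3-7}, and treat $\partial_x I_2,\partial_x I_3$ by the same time-splitting used for $L^\infty$. Where the proposal has a genuine gap is the far-region contribution to $\|\partial_x I_1\|_{L^2}$. Mimicking \eqref{3-11} with ``an $L^2$-norm of the kernel in place of $L^1/L^\infty$'' does not close: on $\{|y|\ge\sqrt{1+t}-1\}$ you only know $|w_0(y)|\le C(1+|y|)^{-(\alpha-1)}$, and since $\alpha-1\le 1$ this weight is neither in $L^1$ nor (for $\alpha\le 3/2$) in $L^2$ over the far region. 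Hence neither Minkowski $\int_{\text{far}}\|\partial_x G(\cdot-y,t)\|_{L^2}|w_0(y)|\,dy$ nor Young $\|\partial_x G\|_{L^1}\|w_0\mathbf 1_{\text{far}}\|_{L^2}$ yields a finite bound, and pulling out $\sup_{\text{far}}|w_0|$ leaves the $x$-independent constant $(1+t)^{-(\alpha-1)/2}\|\partial_x G\|_{L^1}$, which is not in $L^2_x$. So the assertion ``the far region contributes the factor $(1+t)^{-(\alpha-1)/2}$'' is not justified as written.

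The missing idea, which the paper supplies, is to integrate by parts in $y$ on the far region only, converting $\partial_x G(x-y,t)\,w_0(y)$ into $G(x-y,t)\,w_0'(y)$ plus two boundary terms at $y=\pm(\sqrt{1+t}-1)$. The point is that $w_0'$ decays one power faster: from \eqref{1-7}, \eqref{1-2} and the rapid decay of $\chi_*$ one has $|w_0'(x)|\le C(1+|x|)^{-\alpha}$ (this is \eqref{3-26} in the paper). Since $\alpha>1$, the far integral $\int_{\text{far}}\|G(\cdot-y,t)\|_{L^2}|w_0'(y)|\,dy\le Ct^{-1/4}\int_{\sqrt{1+t}-1}^\infty(1+y)^{-\alpha}dy\le C(1+t)^{-\alpha/2+1/4}$ now converges, and the boundary terms give $\|G(\cdot,t)\|_{L^2}\,|w_0(\pm(\sqrt{1+t}-1))|\le Ct^{-1/4}(1+t)^{-(\alpha-1)/2}$, the same rate. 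The near region is treated as you indicate (though note the factor is $\int_{|y|\le\sqrt{1+t}-1}(1+|y|)^{-(\alpha-1)}dy\sim(1+t)^{(2-\alpha)/2}$, not the raw length $\sqrt{1+t}$; this is what actually produces $(1+t)^{-\alpha/2+1/4}$ and the $\log$ at $\alpha=2$). With this correction to the far-region step, the rest of your outline matches the paper's proof.
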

\begin{proof}
It suffices to derive the following $L^{2}$-decay estimate of $\p_{x}w(x, t)$:
\begin{equation}\label{3-22}
\|\p_{x}w(\cdot, t)\|_{L^{2}}\le C\begin{cases}
(1+t)^{-\alpha/2+1/4}, &t\ge1, \ 1<\alpha<2,\\
(1+t)^{-3/4}\log(2+t), &t\ge1, \ \alpha=2.  
\end{cases}
\end{equation}
Indeed, if we have shown \eqref{3-22}, then from \eqref{3-16}, Proposition \ref{prop3-1}, Lemma \ref{lem2-2} and \eqref{3-17}, it follows that 
\begin{align}\label{3-23}
\begin{split}
\|u(\cdot, t)-\chi(\cdot, t)\|_{L^{2}}&\le C\|w(\cdot, t)\|_{L^{\infty}}\|\chi(\cdot, t)\|_{L^{2}}+C\|\p_{x}w(\cdot, t)\|_{L^{2}}\\
&\le C(1+t)^{-(\alpha-1)/2}(1+t)^{-1/4}+C(1+t)^{-\alpha/2+1/4} \\
&\le C(1+t)^{-\alpha/2+1/4}, \ t\ge1 
\end{split}
\end{align}
for $1<\alpha<2$. While, the following estimate holds in the case of $\alpha=2$: 
\begin{align}\label{3-24}
\begin{split}
\|u(\cdot, t)-\chi(\cdot, t)\|_{L^{2}}&\le C\|w(\cdot, t)\|_{L^{\infty}}\|\chi(\cdot, t)\|_{L^{2}}+C\|\p_{x}w(\cdot, t)\|_{L^{2}}\\
&\le C(1+t)^{-1/2}\log(2+t)(1+t)^{-1/4}\\
&\ \ \ \ +C(1+t)^{-3/4}\log(2+t) \\
&\le C(1+t)^{-3/4}\log(2+t), \ t\ge1.
\end{split}
\end{align}
For $0\le t \le1$, from Proposition \ref{GE}, and Lemma \ref{lem2-2}, we obtain 
\begin{align}\label{3-25}
\begin{split}
\|u(\cdot, t)-\chi(\cdot, t)\|_{L^{2}}&\le C\|u(\cdot, t)\|_{L^{2}}+C\|\chi(\cdot, t)\|_{L^{2}} \\
&\le CE_{s}(1+t)^{-1/4}+C|\delta|(1+t)^{-1/4} \\
&\le C, \ \ 0\le t \le1. 
\end{split}
\end{align}
Combining \eqref{3-23} through \eqref{3-25}, we get \eqref{3-21} for all $t\ge0$.  

To prove \eqref{3-22}, we consider the following integral equation again: 
\begin{align}\tag{\ref{3-7}}
\begin{split}
w(t)&=G(t)*w_{0}+\frac{bc}{6}\int_{0}^{t}G(t-\tau)*(\rho u^{3})(\tau)d\tau\\
&\ \ \ \ +\frac{bk}{2}\int_{0}^{t}G(t-\tau)*(\rho u_{xx})(\tau)d\tau\\
&\equiv I_{1}+I_{2}+I_{3}.
\end{split}
\end{align}
For the initial data, using \eqref{1-7}, \eqref{1-2} and \eqref{1-5}, we have
\begin{align}\label{3-26}
\begin{split}
&|w_{0}'(x)|\\
&\le C\biggl|\exp \biggl(-\frac{b}{2}\int_{-\infty}^{x}u_{0}(y)dy\biggl)\biggl||u_{0}(x)|+C\biggl|\exp \biggl(-\frac{b}{2}\int_{-\infty}^{x}\chi_{*}(y)dy\biggl)\biggl||\chi_{*}(x)|\\
&\le C \exp \biggl(\frac{|b|\|u_{0}\|_{L^{1}}}{2}\biggl)(1+|x|)^{-\alpha}+C\exp \biggl(\frac{|b|\|\chi_{*}\|_{L^{1}}}{2}\biggl)(1+|x|)^{-N} \ \ (\forall N\ge0)\\
&\le C(1+|x|)^{-\alpha}, \ \ x\in \R. 
\end{split}
\end{align}
Now we shall evaluate $I_{1}$, $I_{2}$ and $I_{3}$. Splitting the $y$-integral and making the integration by parts, we have
\begin{align}\label{3-27}
\begin{split}
\p_{x}I_{1}(x, t)&=\int_{\R}\p_{x}G(x-y, t)w_{0}(y)dy \\
&=\biggl(\int_{-\infty}^{1-\sqrt{1+t}}+\int_{\sqrt{1+t}-1}^{\infty}\biggl)\p_{x}G(x-y, t)w_{0}(y)dy\\
&\ \ \ \ +\int_{|y|\le \sqrt{1+t}-1}\p_{x}G(x-y, t)w_{0}(y)dy\\
&=-G(x-1+\sqrt{1+t}, t)w_{0}(1-\sqrt{1+t})\\
&\ \ \ \ +G(x-\sqrt{1+t}-1, t)w_{0}(\sqrt{1+t}-1)\\
&\ \ \ \ +\biggl(\int_{-\infty}^{1-\sqrt{1+t}}+\int_{\sqrt{1+t}-1}^{\infty}\biggl)G(x-y, t)w'_{0}(y)dy\\
&\ \ \ \ +\int_{|y|\le \sqrt{1+t}-1}\p_{x}G(x-y, t)w_{0}(y)dy\\
&\equiv I_{1.1}+I_{1.2}+I_{1.3}+I_{1.4}.
\end{split}
\end{align} 
Using Lemma \ref{lem2-2} and \eqref{3-8}, we have
\begin{align}\label{3-28}
\begin{split}
\|I_{1.1}(\cdot, t)\|_{L^{2}}&\le C\|G(\cdot, t)\|_{L^{2}}|w_{0}(1-\sqrt{1+t})|\\
&\le Ct^{-1/4}(1+t)^{-(\alpha-1)/2}\\
&\le C(1+t)^{-\alpha/2+1/4}, \ t\ge1.
\end{split}
\end{align}
Similarly, we obtain
\begin{equation}\label{3-29}
\|I_{1.2}(\cdot, t)\|_{L^{2}}\le C(1+t)^{-\alpha/2+1/4}, \ t\ge1.
\end{equation}
From Lemma \ref{lem2-2} and \eqref{3-26}, we obtain  
\begin{align}\label{3-30}
\begin{split}
&\|I_{1.3}(\cdot, t)\|_{L^{2}}\\
&\le C\biggl(\int_{-\infty}^{1-\sqrt{1+t}}+\int_{\sqrt{1+t}-1}^{\infty}\biggl)\|G(\cdot-y, t)\|_{L^{2}}|w'_{0}(y)|dy\\
&\le Ct^{-1/4}\int_{-\infty}^{1-\sqrt{1+t}}(1-y)^{-\alpha}dy+Ct^{-1/4}\int_{\sqrt{1+t}-1}^{\infty}(1+y)^{-\alpha}dy\\
&=Ct^{-1/4}\biggl[\frac{-1}{1-\alpha}(1-y)^{1-\alpha}\biggl]_{-\infty}^{1-\sqrt{1+t}}+Ct^{-1/4}\biggl[\frac{1}{1-\alpha}(1+y)^{1-\alpha}\biggl]_{\sqrt{1+t}-1}^{\infty}\\
&\le Ct^{-1/4}(1+t)^{1/2-\alpha/2}\\
&\le C(1+t)^{-\alpha/2+1/4}, \ t\ge1.
\end{split}
\end{align} 
On the other hand, we have from Lemma \ref{lem2-2} and \eqref{3-8} 
\begin{align}\label{3-31}
\begin{split}
\|I_{1.4}(\cdot, t)\|_{L^{2}}\le&\ \int_{|y|\le \sqrt{1+t}-1}\|\p_{x}G(\cdot-y, t)\|_{L^{2}}|w_{0}(y)|dy\\
\le&\ Ct^{-3/4}\int_{|y|\le\sqrt{1+t}-1}(1+|y|)^{-(\alpha-1)}dy\\
\le&\ Ct^{-3/4}\int_{0}^{\sqrt{1+t}-1}(1+y)^{-(\alpha-1)}dy\\
\le&\ C\begin{cases}
(1+t)^{-\alpha/2+1/4}, &t\ge1, \ 1<\alpha<2, \\
(1+t)^{-3/4}\log(2+t), &t\ge1, \ \alpha=2. 
\end{cases}
\end{split}
\end{align} 
Therefore, combining \eqref{3-27} through \eqref{3-31}, we obtain 
\begin{equation}\label{3-32}
\|\p_{x}I_{1}(\cdot, t)\|_{L^{2}}\le C\begin{cases}
(1+t)^{-\alpha/2+1/4}, &t\ge1, \ 1<\alpha<2,  \\
(1+t)^{-3/4}\log(2+t), &t\ge1, \ \alpha=2. 
\end{cases}
\end{equation}
By Young's inequality, \eqref{3-9}, Lemma \ref{lem2-2} and Proposition \ref{GE}, we have
\begin{align}\label{3-33}
\begin{split}
&\|\p_{x}I_{2}(\cdot, t)\|_{L^{2}}\\
&\le C\biggl(\int_{0}^{t/2}+\int_{t/2}^{t}\biggl)\|\p_{x}G(t-\tau)*(\rho u^{3})(\tau)\|_{L^{2}}d\tau\\
&\le C\int_{0}^{t/2}\|\p_{x}G(\cdot, t-\tau)\|_{L^{2}}\|u(\cdot, \tau)\|_{L^{\infty}}\|u(\cdot, t)\|_{L^{2}}^{2}d\tau\\
&\ \ \ \ +C\int_{t/2}^{t}\|\p_{x}G(\cdot, t-\tau)\|_{L^{1}}\|u(\cdot, t)\|_{L^{\infty}}^{2}\|u(\cdot, \tau)\|_{L^{2}}d\tau\\
&\le C\int_{0}^{t/2}(t-\tau)^{-3/4}(1+\tau)^{-1}d\tau+C\int_{t/2}^{t}(t-\tau)^{-1/2}(1+\tau)^{-5/4}d\tau\\
&\le Ct^{-3/4}\log(2+t)+C(1+t)^{-3/4}\\
&\le C(1+t)^{-3/4}\log(2+t), \ \ t\ge1.
\end{split}
\end{align}
Finally, we evaluate $I_{3}$. Using \eqref{3-14}, in the same way to get \eqref{3-15}, it follows that 
\begin{align}\label{3-34}
\begin{split}
&\|\p_{x}I_{3}(\cdot, t)\|_{L^{2}}\\
&\le C\int_{0}^{t/2}\|\p_{x}^{2}G(\cdot, t-\tau)\|_{L^{2}}\|u_{x}(\cdot, t)\|_{L^{1}}d\tau\\
&\ \ \ \ +C\int_{0}^{t/2}\|\p_{x}G(\cdot, t-\tau)\|_{L^{2}}\|u(\cdot, \tau)\|_{L^{2}}\|u_{x}(\cdot, \tau)\|_{L^{2}}d\tau\\
&\ \ \ \ +C\int_{t/2}^{t}\|\p_{x}G(\cdot, t-\tau)\|_{L^{1}}\|u_{xx}(\cdot, t)\|_{L^{2}}d\tau\\
&\le C\int_{0}^{t/2}(t-\tau)^{-5/4}\tau^{-1/2}(1+\tau^{-1/4})d\tau+C\int_{0}^{t/2}(t-\tau)^{-3/4}(1+\tau)^{-1}d\tau\\
&\ \ \ \ +C\int_{t/2}^{t}(t-\tau)^{-1/2}(1+\tau)^{-5/4}d\tau \\
&\le C(1+t)^{-3/4}+C(1+t)^{-3/4}\log(2+t)+C(1+t)^{-3/4} \\
&\le C(1+t)^{-3/4}\log(2+t), \ \ t\ge1. 
\end{split}
\end{align}
Summing up \eqref{3-7} and \eqref{3-32} through \eqref{3-34}, we obtain \eqref{3-22}. This completes the proof.   
\end{proof}

Finally, by the interpolation inequality, it follows that 
\begin{equation*}
\|u(\cdot, t)-\chi(\cdot, t)\|_{L^{p}}\le \|u(\cdot, t)-\chi(\cdot, t)\|_{L^{\infty}}^{1-2/p}\|u(\cdot, t)-\chi(\cdot, t)\|_{L^{2}}^{2/p}
\end{equation*}
for $p\in[2, \infty]$. Therefore, \eqref{1-16} and \eqref{3-21} lead to the following estimate: 
\begin{cor}\label{cor3-3}
Assume the same conditions on $u_{0}$ in Theorem \ref{main1} are valid. Then, for $p\in[2, \infty]$, we have
\begin{equation*}
\|u(\cdot, t)-\chi(\cdot, t)\|_{L^{p}}\le \begin{cases}
(1+t)^{-\alpha/2+1/2p}, &t\ge0, \ 1<\alpha<2, \\
(1+t)^{-1+1/2p}\log(2+t), &t\ge0, \ \alpha=2.
\end{cases}
\end{equation*} 
\end{cor}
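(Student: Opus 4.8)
The plan is to derive the $L^p$ bound by interpolating between the $L^\infty$ estimate and the $L^2$ estimate that are already established. Concretely, for any $p\in[2,\infty]$ and any $f\in L^2(\R)\cap L^\infty(\R)$ one has the elementary interpolation inequality
\begin{equation*}
\|f\|_{L^p}\le \|f\|_{L^\infty}^{1-2/p}\,\|f\|_{L^2}^{2/p},
\end{equation*}
which follows by writing $|f|^p=|f|^{p-2}|f|^2\le \|f\|_{L^\infty}^{p-2}|f|^2$ and integrating (the endpoints $p=2$ and $p=\infty$ being trivial). I would apply this with $f=u(\cdot,t)-\chi(\cdot,t)$, which belongs to the required spaces for every fixed $t\ge0$ by Proposition \ref{GE} and Lemma \ref{lem2-2}.

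Next I would substitute the two prior decay rates, namely the $L^\infty$ bound \eqref{1-16} from Theorem \ref{main1} and the $L^2$ bound \eqref{3-21} from Proposition \ref{prop3-2}. In the range $1<\alpha<2$, raising $(1+t)^{-\alpha/2}$ to the power $1-2/p$ contributes the exponent $-\tfrac{\alpha}{2}+\tfrac{\alpha}{p}$, while raising $(1+t)^{-\alpha/2+1/4}$ to the power $2/p$ contributes $-\tfrac{\alpha}{p}+\tfrac{1}{2p}$; the two cross terms $\pm\alpha/p$ cancel and the sum is exactly $-\tfrac{\alpha}{2}+\tfrac{1}{2p}$, as claimed. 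In the critical case $\alpha=2$, the power exponents combine to $-1+\tfrac{2}{p}-\tfrac{3}{2p}=-1+\tfrac{1}{2p}$, and the logarithmic factors multiply to $(\log(2+t))^{1-2/p}(\log(2+t))^{2/p}=\log(2+t)$, producing the second line of the statement.

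Since both input estimates \eqref{1-16} and \eqref{3-21} are valid for all $t\ge0$, the interpolated bound holds for all $t\ge0$ as well, so no separate treatment of small times is needed. There is essentially no obstacle here: the only genuine content is the bookkeeping of exponents, and the one point worth verifying—the exact cancellation of the $\alpha/p$ terms in the subcritical regime and the correct recombination of the logarithmic weights at $\alpha=2$—works out cleanly in both cases.
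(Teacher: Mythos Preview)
Your argument is correct and is exactly the approach the paper takes: the corollary is stated immediately after the interpolation inequality $\|f\|_{L^{p}}\le \|f\|_{L^{\infty}}^{1-2/p}\|f\|_{L^{2}}^{2/p}$ is recorded, and the paper simply substitutes \eqref{1-16} and \eqref{3-21} into it. Your exponent and logarithm bookkeeping matches the paper's result verbatim.
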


\section{Proof of Theorem \ref{main2} for $1<\alpha < 2$}
In this section, we shall prove Theorem \ref{main2} in the case of $1<\alpha<2$. Namely, we prove \eqref{1-18} and \eqref{1-22}. First, we set  
\begin{equation}\label{4-1}
\psi(x, t)\equiv u(x, t)-\chi(x, t), \ \ \psi_{0}(x)\equiv u_{0}(x)-\chi_{*}(x).
\end{equation}
Then we have the following equation: 
\begin{align}\label{4-2}
\begin{split}
\psi_{t}+(b\chi \psi)_{x}-\psi_{xx}&=-\biggl(\frac{b}{2}\psi^{2}\biggl)_{x}-\biggl(\frac{c}{3}u^{3}\biggl)_{x}-ku_{xxx}, \ t>0, \ x\in \R, \\
\psi(x, 0)&=\psi_{0}(x), \ x\in \R.
\end{split}
\end{align}
From Lemma \ref{lem2-4}, we obtain 
\begin{align}\label{4-3}
\begin{split}
\psi(x, t)=&\ U[\psi_{0}](x, t, 0)-\frac{b}{2}\int_{0}^{t}U[\p_{x}\psi^{2}(\tau)](x, t, \tau)d\tau\\
&-\frac{c}{3}\int_{0}^{t}U[\p_{x}u^{3}(\tau)](x, t, \tau)d\tau-k\int_{0}^{t}U[\p_{x}^{3}u(\tau)](x, t, \tau)d\tau.
\end{split}
\end{align}
For the first term of the above equation \eqref{4-3}, we have the following asymptotic formula. This formula plays an essential role in the proof of \eqref{1-18} and \eqref{1-19}. The idea of the proof is based on the method used in Narazaki and Nishihara~\cite{Narazaki and Nishihara}.
\begin{prop}\label{prop4-1}
Assume the same conditions on $u_{0}$ in Theorem \ref{main2} are valid. Then we have 
\begin{align}
&\lim_{t\to \infty}(1+t)^{\alpha/2}\|U[\psi_{0}](\cdot, t, 0)-Z(\cdot, t)\|_{L^{\infty}}=0, \ 1<\alpha<2, \label{4-4}\\
&\lim_{t\to \infty}\frac{(1+t)}{\log(1+t)}\|U[\psi_{0}](\cdot, t, 0)-Z(\cdot, t)\|_{L^{\infty}}=0, \ \alpha=2,  \label{4-5}
\end{align}
where $Z(x, t)$ is defined by \eqref{1-20}. 
\end{prop}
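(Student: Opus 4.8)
The plan is to reduce the statement to a single weighted convolution estimate combined with a cut-off argument of the type used by Narazaki and Nishihara. First I would rewrite the leading term $U[\psi_{0}](\cdot,t,0)$ in a form that is directly comparable with $Z$. Recalling the definition \eqref{2-11} and that $\eta(y,0)=\eta_{*}(y)$, the definition \eqref{1-17} of $z_{0}$ says exactly that $(\eta(y,0))^{-1}\int_{-\infty}^{y}\psi_{0}(\xi)d\xi=z_{0}(y)$, so that
\begin{equation*}
U[\psi_{0}](x,t,0)=\int_{\R}\p_{x}(G(x-y,t)\eta(x,t))\,z_{0}(y)\,dy.
\end{equation*}
Subtracting \eqref{1-20} then gives
\begin{equation*}
U[\psi_{0}](x,t,0)-Z(x,t)=\int_{\R}\p_{x}(G(x-y,t)\eta(x,t))\,r(y)\,dy,
\end{equation*}
where $r(y)\equiv z_{0}(y)-c_{\alpha}(y)(1+|y|)^{-(\alpha-1)}$.

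Next I would record the two properties of the remainder $r$ that drive the proof. From the mean value theorem relating $z_{0}$ to $w_{0}$ together with the bound \eqref{3-8}, one has $|z_{0}(y)|\le C(1+|y|)^{-(\alpha-1)}$, so $r$ is bounded on all of $\R$; moreover the hypothesis $\lim_{x\to\pm\infty}(1+|x|)^{\alpha-1}z_{0}(x)=c_{\alpha}^{\pm}$ is precisely the statement that $(1+|y|)^{\alpha-1}r(y)\to0$ as $|y|\to\infty$, i.e. $r(y)=o((1+|y|)^{-(\alpha-1)})$. The core estimate is then the weighted convolution bound already contained in \eqref{3-11}: writing $\p_{x}(G\eta)=(\p_{x}G)\eta+G\,\p_{x}\eta$, using that $\eta$ is bounded and that $\|\p_{x}\eta(\cdot,t)\|_{L^{\infty}}\le C(1+t)^{-1/2}$ by Lemma \ref{lem2-3}, \eqref{3-11} yields
\begin{equation*}
\int_{\R}|\p_{x}(G(x-y,t)\eta(x,t))|(1+|y|)^{-(\alpha-1)}\,dy\le C\begin{cases}(1+t)^{-\alpha/2}, & 1<\alpha<2,\\[2pt] (1+t)^{-1}\log(2+t), & \alpha=2,\end{cases}
\end{equation*}
uniformly in $x\in\R$ for $t\ge1$.

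With these in hand I would argue by $\e$-$R$ splitting. Given $\e>0$, fix $R$ so large that $|r(y)|\le\e(1+|y|)^{-(\alpha-1)}$ for $|y|\ge R$, and split the integral for $U[\psi_{0}]-Z$ into the regions $|y|\le R$ and $|y|>R$. On the far region the displayed bound immediately gives a contribution $\le C\e(1+t)^{-\alpha/2}$ (resp. $\le C\e(1+t)^{-1}\log(2+t)$). On the near region $r$ is bounded by some constant $M_{R}$, while on the compact set $|y|\le R$ the kernel is controlled by $\|\p_{x}G(\cdot,t)\|_{L^{\infty}}\le Ct^{-1}$ and $\|G(\cdot,t)\|_{L^{\infty}}\|\p_{x}\eta(\cdot,t)\|_{L^{\infty}}\le Ct^{-1/2}(1+t)^{-1/2}$, producing a contribution bounded by $C_{R}t^{-1}$. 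Multiplying by the normalizing factor $(1+t)^{\alpha/2}$ (resp. $(1+t)/\log(1+t)$), the near part becomes $O(t^{\alpha/2-1})$ (resp. $O(1/\log t)$), which tends to $0$ because $\alpha<2$ (resp. because of the logarithm), while the far part stays $\le C\e$. Hence the $\limsup_{t\to\infty}$ of the normalized $L^{\infty}$ norm is $\le C\e$ for every $\e>0$, which gives \eqref{4-4} and \eqref{4-5}.

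The main obstacle is exactly the interplay in this final step: one must verify that the kernel $\p_{x}(G\eta)$ supplies the extra half power of $t$ — coming from $\p_{x}G$ and from $\p_{x}\eta$ — needed to make the compactly supported part of the integral decay strictly faster than the target rate, so that in the limit only the genuine tail of $r$, which is $o((1+|y|)^{-(\alpha-1)})$, contributes. Keeping every estimate uniform in $x\in\R$ throughout is what allows the conclusion to be stated in $L^{\infty}$ rather than in a weaker norm.
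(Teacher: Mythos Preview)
Your proposal is correct and follows essentially the same route as the paper: rewrite $U[\psi_{0}](\cdot,t,0)$ as an integral against $z_{0}$, subtract $Z$ to produce the remainder $r(y)=z_{0}(y)-c_{\alpha}(y)(1+|y|)^{-(\alpha-1)}$, and then carry out the $\e$-$R$ splitting, using the weighted convolution bound \eqref{3-11} on the far region and the pointwise $L^{\infty}$ bounds on $\p_{x}G$ and $\p_{x}\eta$ to get $O(t^{-1})$ on the near region. The paper's argument is identical in structure and uses exactly the same ingredients (Lemma~\ref{lem2-2}, Lemma~\ref{lem2-3}, and \eqref{3-11}).
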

\begin{proof}
From the definition of $U$ given by \eqref{2-11} we have 
\begin{align}\label{4-6}
\begin{split}
U[\psi_{0}](x, t, 0)=&\int_{\R}\p_{x}(G(x-y, t)\eta(x, t))\eta_{*}(y)^{-1}\biggl(\int_{-\infty}^{y}(u_{0}(\xi)-\chi_{*}(\xi))d\xi\biggl) dy\\
=&\int_{\R}\p_{x}(G(x-y, t)\eta(x, t))z_{0}(y)dy, 
\end{split}
\end{align} 
where $z_{0}(y)$ is defined by \eqref{1-17}. Now, from \eqref{2-8}, \eqref{1-2} and \eqref{1-5}, we can check the following estimate in the same way to get \eqref{3-8}: 
\begin{equation*}
|z_{0}(y)|\le C(1+|y|)^{-(\alpha-1)}, \ y\in \R.
\end{equation*}
Thus, we obtain the boundedness of $(1+|y|)^{\alpha-1}z_{0}(y)$. Moreover, from the assumption on $z_{0}(y)$, for any $\e>0$ there is a constant $R=R(\e)>0$ such that
 \begin{align*}
&|z_{0}(y)-c_{\alpha}^{+}(1+|y|)^{-(\alpha-1)}|\le \e(1+|y|)^{-(\alpha-1)}, \ y\ge R, \\
&|z_{0}(y)-c_{\alpha}^{-}(1+|y|)^{-(\alpha-1)}|\le \e(1+|y|)^{-(\alpha-1)}, \ y\le -R. 
 \end{align*}
 From \eqref{1-20} and \eqref{4-6}, we have the following estimate
 \begin{align}\label{4-7}
 \begin{split}
&|U[\psi_{0}](x, t, 0)-Z(x, t)|\\
 &\le \int_{\R}|\p_{x}(G(x-y, t)\eta(x, t))||z_{0}(y)-c_{\alpha}(y)(1+|y|)^{-(\alpha-1)}|dy\\
 &\le \int_{|y|\le R}|\p_{x}(G(x-y, t)\eta(x, t))||z_{0}(y)-c_{\alpha}(y)(1+|y|)^{-(\alpha-1)}|dy\\
 &\ \ \ \ +\e \int_{|y|\ge R}|\p_{x}(G(x-y, t)\eta(x, t))|(1+|y|)^{-(\alpha-1)}dy \\
 &\le C\sum_{n=0}^{1}\|\p_{x}^{1-n}\eta(\cdot, t)\|_{L^{\infty}}\|\p_{x}^{n}G(\cdot, t)\|_{L^{\infty}}\int_{|y|\le R}|z_{0}(y)-c_{\alpha}(y)(1+|y|)^{-(\alpha-1)}|dy\\
 &\ \ \ +\e C\sum_{n=0}^{1}\|\p_{x}^{1-n}\eta(\cdot, t)\|_{L^{\infty}}\int_{\R}|\p_{x}^{n}G(x-y, t)|(1+|y|)^{-(\alpha-1)}dy.\\
 \end{split}
 \end{align}
 Therefore, by using \eqref{4-7}, Lemma \ref{lem2-3}, Lemma \ref{lem2-2} and \eqref{3-11}, we get 
  \begin{align*}
 \begin{split}
 \|U[\psi_{0}](\cdot, t, 0)-Z(\cdot, t)\|_{L^{\infty}}&\le C(1+t)^{-1}\\
&\ \ \ \ +\e C\begin{cases}
(1+t)^{-\alpha/2}, &t\ge1, \ 1<\alpha<2, \\
(1+t)^{-1}\log(2+t), &t\ge1, \ \alpha=2.\end{cases}
  \end{split}
 \end{align*}
 Thus, we obtain 
 \begin{align*}
&\limsup_{t\to \infty}(1+t)^{\alpha/2}\|U[\psi_{0}](\cdot, t, 0)-Z(\cdot, t)\|_{L^{\infty}}\le\e C, \ 1<\alpha<2, \\
&\limsup_{t\to \infty}\frac{(1+t)}{\log(1+t)}\|U[\psi_{0}](\cdot, t, 0)-Z(\cdot, t)\|_{L^{\infty}} \le\e C, \ \alpha=2.  
\end{align*}
Therefore, we get \eqref{4-4} and \eqref{4-5}, because $\e>0$ can be chosen arbitrarily small. 
\end{proof}

\begin{proof}[\rm{\bf{End of the proof of Theorem \ref{main2} for $1<\alpha<2$}}]
First, we shall prove \eqref{1-18}. By using \eqref{1-20} and \eqref{4-3}, we have 
\begin{align}\label{4-8}
\begin{split}
&u(x, t)-\chi(x, t)-Z(x, t)\\
&=U[\psi_{0}](x, t, 0)-Z(x, t)-\frac{b}{2}\int_{0}^{t}U[\p_{x}\psi^{2}(\tau)](x, t, \tau)d\tau\\
&\ \ \ \ -\frac{c}{3}\int_{0}^{t}U[\p_{x}u^{3}(\tau)](x, t, \tau)d\tau-k\int_{0}^{t}U[\p_{x}^{3}u(\tau)](x, t, \tau)d\tau\\
&\equiv U[\psi_{0}](x, t, 0)-Z(x, t)+J_{1}+J_{2}+J_{3}.
\end{split}
\end{align}

\newpage
We shall evaluate $J_{1}$, $J_{2}$ and $J_{3}$. Using Lemma \ref{lem2-5}, Proposition \ref{prop3-2} and Theorem \ref{main1}, we obtain 
\begin{align}\label{4-9}
\begin{split}
&\|J_{1}(\cdot, t)\|_{L^{\infty}} \\
&\le C\sum_{n=0}^{1}(1+t)^{-1/2+n/2}\\
&\ \ \ \ \times \biggl(\int_{0}^{t/2}(t-\tau)^{-1/2-n/2}\| \psi^{2}(\cdot, \tau)\|_{L^{1}}d\tau+\int_{t/2}^{t}(t-\tau)^{-n/2}\| \psi^{2}(\cdot, \tau)\|_{L^{\infty}}d\tau \biggl) \\
&\le C\sum_{n=0}^{1}(1+t)^{-1/2+n/2}\\
&\ \ \ \ \times \biggl(\int_{0}^{t/2}(t-\tau)^{-1/2-n/2}(1+\tau)^{-\alpha+1/2}d\tau+\int_{t/2}^{t}(t-\tau)^{-n/2}(1+\tau)^{-\alpha}d\tau \biggl) \\
&\le C
\begin{cases}
(1+t)^{-1}, &t\ge1, \ 3/2<\alpha<2, \\
(1+t)^{-1}\log(1+t), &t\ge1, \ \alpha=3/2, \\
(1+t)^{-\alpha+1/2}, &t\ge1, \ 1<\alpha<3/2.
\end{cases}
\end{split}
\end{align}

Next we evaluate $J_{2}$. In the same way to get \eqref{4-9}, from Lemma \ref{lem2-5} and Proposition \ref{GE}, we get 
\begin{align}\label{4-10}
\begin{split}
&\|J_{2}(\cdot, t)\|_{L^{\infty}}\\
&\le C\sum_{n=0}^{1}(1+t)^{-1/2+n/2}\\
&\ \ \ \ \times \biggl(\int_{0}^{t/2}(t-\tau)^{-1/2-n/2}\| u^{3}(\cdot, \tau)\|_{L^{1}}d\tau+\int_{t/2}^{t}(t-\tau)^{-n/2}\| u^{3}(\cdot, \tau)\|_{L^{\infty}}d\tau \biggl) \\
&\le C\sum_{n=0}^{1}(1+t)^{-1/2+n/2}\\
&\ \ \ \ \times \biggl(\int_{0}^{t/2}(t-\tau)^{-1/2-n/2}(1+\tau)^{-1}d\tau+\int_{t/2}^{t}(t-\tau)^{-n/2}(1+\tau)^{-3/2}d\tau \biggl) \\
&\le C(1+t)^{-1}\log(1+t), \ t\ge1. 
\end{split}
\end{align}

Finally, we evaluate $I_{3}$. We define $J_{3.1}$ and $J_{3.2}$ are as follows. 
\begin{align}\label{4-11}
\begin{split}
J_{3}=&-k\int_{0}^{t}\int_{\R}\p_{x}(G(x-y, t-\tau)\eta(x, t))(\eta(y, \tau))^{-1}\p_{y}^{2}u(y, \tau)dyd\tau\\
=&-k\sum_{n=0}^{1}\p_{x}^{1-n}\eta(x, t) \\
& \times \left(\int_{0}^{t/2}+\int_{t/2}^{t}\right)\int_{\R}\p_{x}^{n}G(x-y, t-\tau)(\eta(y, \tau))^{-1}\p_{y}^{2}u(y, \tau)dyd\tau\\
\equiv&\ J_{3.1}+J_{3.2}.
\end{split}
\end{align}
Then, by making the integration by parts for $J_{3.1}$, we obtain
\begin{align*}
J_{3.1}=&-k\sum_{n=0}^{1}\p_{x}^{1-n}\eta(x, t)\int_{0}^{t/2}\int_{\R}\p_{x}^{n+1}G(x-y, t-\tau)(\eta(y, \tau))^{-1}\p_{y}u(y, \tau)\\
&+\p_{x}^{n}G(x-y, t-\tau)\p_{y}(\eta(y, \tau))^{-1}\p_{y}u(y, \tau)dyd\tau.
\end{align*}
Using Young's inequality, Lemma \ref{lem2-3}, Lemma \ref{lem2-2}, Proposition \ref{GE} and \eqref{2-8}, we have 
\begin{align}\label{4-12}
\begin{split}
&\|J_{3.1}(\cdot, t)\|_{L^{\infty}}\\
&\le C\sum_{n=0}^{1}\|\p_{x}^{1-n}\eta(\cdot, t)\|_{L^{\infty}}\int_{0}^{t/2}(\|\p_{x}^{n+1}G(t-\tau)*(\eta^{-1}u_{x})(\tau)\|_{L^{\infty}}\\
&\ \ \ \ +\|\p_{x}^{n}G(t-\tau)*((\eta^{-1})_{x}u_{x})(\tau)\|_{L^{\infty}})d\tau \\
&\le C\sum_{n=0}^{1}(1+t)^{-1/2+n/2}\int_{0}^{t/2}(\|\p_{x}^{n+1}G(\cdot, t-\tau)\|_{L^{\infty}}\|u_{x}(\cdot, \tau)\|_{L^{1}}\\
&\ \ \ \ +\|\p_{x}^{n}G(\cdot, t-\tau)\|_{L^{\infty}}\|u_{x}(\cdot, \tau)\|_{L^{2}}\|\p_{x}(\eta(\cdot, \tau)^{-1})\|_{L^{2}})d\tau  \\
&\le C\sum_{n=0}^{1}(1+t)^{-1/2+n/2}\\
&\ \ \ \ \times \int_{0}^{t/2}\left((t-\tau)^{-1-n/2}\tau^{-1/2}(1+\tau^{-1/4})+(t-\tau)^{-1/2-n/2}(1+\tau)^{-1}\right)d\tau\\
&\le C(1+t)^{-1}+C(1+t)^{-1}\log(1+t)\\
&\le C(1+t)^{-1}\log(1+t), \ t\ge1.
\end{split}
\end{align}
On the other hand, for $J_{3.2}$, the following estimate is obtained: 
\begin{align}\label{4-13}
\begin{split}
\|J_{3.2}(\cdot, t)\|_{L^{\infty}}\le&\ C\sum_{n=0}^{1}\|\p_{x}^{1-n}\eta(\cdot, t)\|_{L^{\infty}}\int_{t/2}^{t}\|\p_{x}^{n}G(\cdot, t-\tau)\|_{L^{1}}\|u_{xx}(\cdot, \tau)\|_{L^{\infty}}d\tau \\
\le&\ C\sum_{n=0}^{1}(1+t)^{-1/2+n/2}\int_{t/2}^{t}(t-\tau)^{-n/2}(1+\tau)^{-3/2}d\tau\\
\le&\ C(1+t)^{-1}, \ t\ge1.
\end{split}
\end{align}
Therefore, from \eqref{4-11} through \eqref{4-13}, we get 
\begin{equation}\label{4-14}
\|J_{3}(\cdot, t)\|_{L^{\infty}}\le C(1+t)^{-1}\log(1+t), \ t\ge1.
\end{equation}

By using \eqref{4-8}, \eqref{4-9}, \eqref{4-10} and \eqref{4-14}, we have
\begin{align*}
&\|u(\cdot, t)-\chi(\cdot, t)-Z(\cdot, t)\|_{L^{\infty}}\\
&\le \|U[\psi_{0}](\cdot, t, 0)-Z(\cdot, t)\|_{L^{\infty}}+C(1+t)^{-1}\log(1+t)\\
&\ \ \ \ +C\begin{cases}
(1+t)^{-1}, &t\ge1, \ \ 3/2<\alpha<2,  \\
(1+t)^{-1}\log(1+t), &t\ge1, \ \ \alpha=3/2,  \\
(1+t)^{-\alpha+1/2}, &t\ge1, \ \ 1<\alpha<3/2.
\end{cases}
\end{align*}
Therefore, from \eqref{4-4}, we obtain 
\begin{equation*}
\limsup_{t\to \infty}(1+t)^{\alpha/2}\|u(\cdot, t)-\chi(\cdot, t)-Z(\cdot, t)\|_{L^{\infty}}=0.
\end{equation*}
This completes the proof of \eqref{1-18}. 

\newpage
Next, we shall prove \eqref{1-22}. First, we take $x=0$ in \eqref{1-20}, then we have from \eqref{1-4} and \eqref{1-21} 
\begin{align}\label{4-15}
\begin{split}
&Z(0, t)\\
&=\int_{\R}c_{\alpha}(y)\left((\p_{y}G)(-y, t)\eta(0, t)+G(-y, t)\frac{b}{2}\chi(0, t)\eta(0, t)\right)(1+|y|)^{-(\alpha-1)}dy\\
&=c_{\alpha}^{+}\eta_{*}(0)\int_{0}^{\infty}(\p_{y}G)(-y, t)(1+y)^{-(\alpha-1)}dy\\
&\ \ \ +c_{\alpha}^{-}\eta_{*}(0)\int_{-\infty}^{0}(\p_{y}G)(-y, t)(1-y)^{-(\alpha-1)}dy\\
&\ \ \ +\frac{bc_{\alpha}^{+}}{2}\eta_{*}(0)\chi_{*}(0)(1+t)^{-1/2}\int_{0}^{\infty}G(y, t)(1+y)^{-(\alpha-1)}dy \\
&\ \ \ +\frac{bc_{\alpha}^{-}}{2}\eta_{*}(0)\chi_{*}(0)(1+t)^{-1/2}\int_{-\infty}^{0}G(y, t)(1-y)^{-(\alpha-1)}dy\\
&=(c_{\alpha}^{+}-c_{\alpha}^{-})\eta_{*}(0)\int_{0}^{\infty}(\p_{y}G)(-y, t)(1+y)^{-(\alpha-1)}dy\\
&\ \ \ +\frac{b(c_{\alpha}^{+}+c_{\alpha}^{-})}{2}\eta_{*}(0)\chi_{*}(0)(1+t)^{-1/2}\int_{0}^{\infty}G(y, t)(1+y)^{-(\alpha-1)}dy \\
&\equiv L_{1}(t)+L_{2}(t), 
\end{split}
\end{align}
since $\p_{y}G(y, t)$ and $G(y, t)$ are the odd and even functions for $y$-variable, respectively. 

From the mean value theorem, there exists $\theta_{j} \in (0, 1)$ such that 
\begin{equation}\label{4-16}
(1+y)^{-(\alpha-j)}-y^{-(\alpha-j)}=-(\alpha-j)(y+\theta_{j})^{-(\alpha-j+1)}.
\end{equation}
Therefore, we obtain from \eqref{1-21}
\begin{align}\label{4-17}
\begin{split}
L_{1}(t)&=\frac{(c_{\alpha}^{+}-c_{\alpha}^{-})\eta_{*}(0)}{4\sqrt{\pi}}t^{-3/2}\int_{0}^{\infty}e^{-y^{2}/4t}y(1+y)^{-(\alpha-1)}dy\\
&=\frac{(c_{\alpha}^{+}-c_{\alpha}^{-})\eta_{*}(0)}{4\sqrt{\pi}}t^{-3/2}\\
&\ \ \ \ \times \biggl(\int_{0}^{\infty}e^{-y^{2}/4t}y^{2-\alpha}dy+\int_{0}^{\infty}e^{-y^{2}/4t}y((1+y)^{-(\alpha-1)}-y^{-(\alpha-1)})dy\biggl)\\
&=\frac{(c_{\alpha}^{+}-c_{\alpha}^{-})\eta_{*}(0)}{4\sqrt{\pi}}t^{-3/2}\\
&\ \ \ \ \times \biggl(2^{2-\alpha}t^{(3-\alpha)/2}\Gamma\biggl(\frac{3-\alpha}{2}\biggl)-(\alpha-1)\int_{0}^{\infty}e^{-y^{2}/4t}y(y+\theta_{1})^{-\alpha}dy\biggl), 
\end{split}
\end{align}
since
\begin{equation}\label{4-18}
\int_{0}^{\infty}e^{-y^{2}/4t}y^{j-\alpha}dy=2^{j-\alpha}t^{(j+1-\alpha)/2}\Gamma \biggl(\frac{j+1-\alpha}{2}\biggl), \ \ j\ge1,   
\end{equation}
where $\Gamma(s)$ is the Gamma function for $s>0$. Similarly, by making the integration by parts, we have from \eqref{4-16} and \eqref{4-18}
\begin{align}\label{4-19}
\begin{split}
L_{2}(t)&=\frac{b(c_{\alpha}^{+}+c_{\alpha}^{-})}{4\sqrt{\pi}}\eta_{*}(0)\chi_{*}(0)(1+t)^{-1/2}t^{-1/2}\int_{0}^{\infty}e^{-y^{2}/4t}(1+y)^{-(\alpha-1)}dy \\
&=\frac{b(c_{\alpha}^{+}+c_{\alpha}^{-})}{4\sqrt{\pi}}\eta_{*}(0)\chi_{*}(0)(1+t)^{-1/2}t^{-1/2}\\
&\ \ \ \ \times \biggl(\frac{1}{\alpha-2}+\frac{1}{2(2-\alpha)}t^{-1}\int_{0}^{\infty}e^{-y^{2}/4t}y(1+y)^{-(\alpha-2)}dy\biggl) \\
&=\frac{b(c_{\alpha}^{+}+c_{\alpha}^{-})}{4\sqrt{\pi}}\eta_{*}(0)\chi_{*}(0)(1+t)^{-1/2}t^{-1/2}\biggl(\frac{1}{\alpha-2}+\frac{1}{2(2-\alpha)}t^{-1}\\
&\ \ \ \ \times \biggl(2^{3-\alpha}t^{2-(\alpha/2)}\Gamma \biggl(2-\frac{\alpha}{2}\biggl)+(2-\alpha)\int_{0}^{\infty}e^{-y^{2}/4t}y(y+\theta_{2})^{-(\alpha-1)}dy\biggl)\biggl). \\
\end{split}
\end{align}
Therefore, from \eqref{4-15}, \eqref{4-17} and \eqref{4-19}, we obtain 
\begin{align}\label{4-20}
\begin{split}
&\|Z(\cdot, t)\|_{L^{\infty}}\\
&\ge|Z(0, t)|\\
&=|L_{1}(t)+L_{2}(t)| \\
&=\frac{|\eta_{*}(0)|}{4\sqrt{\pi}}
\biggl| 2^{2-\alpha}(c_{\alpha}^{+}-c_{\alpha}^{-})\Gamma\biggl(\frac{3-\alpha}{2}\biggl)t^{-\alpha/2}\\
&\ \ \ \ -(c_{\alpha}^{+}-c_{\alpha}^{-})(\alpha-1)t^{-3/2}\int_{0}^{\infty}e^{-y^{2}/4t}y(y+\theta_{1})^{-\alpha}dy \\
&\ \ \ \ -\frac{(c_{\alpha}^{+}+c_{\alpha}^{-})b\chi_{*}(0)}{2-\alpha}(1+t)^{-1/2}t^{-1/2}\\
&\ \ \ \ +\frac{2^{2-\alpha}(c_{\alpha}^{+}+c_{\alpha}^{-})b\chi_{*}(0)}{2-\alpha}\Gamma\biggl(2-\frac{\alpha}{2}\biggl)(1+t)^{-1/2}t^{1/2-\alpha/2}\\
&\ \ \ \ +\frac{(c_{\alpha}^{+}+c_{\alpha}^{-})b\chi_{*}(0)}{2}(1+t)^{-1/2}t^{-3/2}\int_{0}^{\infty}e^{-y^{2}/4t}y(y+\theta_{2})^{-(\alpha-1)}dy\biggl| \\
&=\frac{|\eta_{*}(0)|}{4\sqrt{\pi}}
\biggl| 2^{2-\alpha}\biggl((c_{\alpha}^{+}-c_{\alpha}^{-})\Gamma\biggl(\frac{3-\alpha}{2}\biggl)+\frac{(c_{\alpha}^{+}+c_{\alpha}^{-})b\chi_{*}(0)}{2-\alpha}\Gamma\biggl(2-\frac{\alpha}{2}\biggl)\biggl)t^{-\alpha/2}\\
&\ \ \ \ -\frac{(c_{\alpha}^{+}+c_{\alpha}^{-})b\chi_{*}(0)}{2-\alpha}(1+t)^{-1/2}t^{-1/2} \\
&\ \ \ \ -(c_{\alpha}^{+}-c_{\alpha}^{-})(\alpha-1)t^{-3/2}\int_{0}^{\infty}e^{-y^{2}/4t}y(y+\theta_{1})^{-\alpha}dy \\
&\ \ \ \ +\frac{2^{2-\alpha}(c_{\alpha}^{+}+c_{\alpha}^{-})b\chi_{*}(0)}{2-\alpha}\Gamma\biggl(2-\frac{\alpha}{2}\biggl)((1+t)^{-1/2}-t^{-1/2})t^{1/2-\alpha/2}\\
&\ \ \ \ +\frac{(c_{\alpha}^{+}+c_{\alpha}^{-})b\chi_{*}(0)}{2}(1+t)^{-1/2}t^{-3/2}\int_{0}^{\infty}e^{-y^{2}/4t}y(y+\theta_{2})^{-(\alpha-1)}dy\biggl| \\
&\equiv \frac{|\eta_{*}(0)|}{4\sqrt{\pi}}|M_{1}(t)+M_{2}(t)+M_{3}(t)+M_{4}(t)+M_{5}(t)|. 
\end{split}
\end{align}
For $M_{1}(t)$, we get
\begin{equation}\label{4-21}
|M_{1}(t)|\ge 2^{2-\alpha}|\beta_{0}|(1+t)^{-\alpha/2}, 
\end{equation}
where $\beta_{0}$ is defined by \eqref{1-24}. Obviously, we have
\begin{equation}\label{4-22}
|M_{2}(t)|\le \frac{|(c_{\alpha}^{+}+c_{\alpha}^{-})b\chi_{*}(0)|}{2-\alpha}t^{-1}.
\end{equation}
On the other hand, for $M_{3}(t)$, we have from \eqref{4-18}
\begin{align}\label{4-23}
\begin{split}
|M_{3}(t)|&\le (\alpha-1)|c_{\alpha}^{+}-c_{\alpha}^{-}|t^{-3/2}\int_{0}^{\infty}e^{-y^{2}/4t}y^{1-\alpha}dy\\
&=\frac{(\alpha-1)|c_{\alpha}^{+}-c_{\alpha}^{-}|}{2^{\alpha-1}}\Gamma \biggl(1-\frac{\alpha}{2}\biggl)t^{-1/2-\alpha/2}.\\
\end{split}
\end{align}
It is easy to see that 
\begin{align}\label{4-24}
\begin{split}
|M_{4}(t)|&\le \frac{2^{2-\alpha}|(c_{\alpha}^{+}+c_{\alpha}^{-})b\chi_{*}(0)|}{2-\alpha}\Gamma \biggl(2-\frac{\alpha}{2}\biggl)(t^{-1/2}-(1+t)^{-1/2})t^{1/2-\alpha/2}\\
&\le \frac{2^{2-\alpha}|(c_{\alpha}^{+}+c_{\alpha}^{-})b\chi_{*}(0)|}{2-\alpha}\Gamma \biggl(2-\frac{\alpha}{2}\biggl)t^{-1-\alpha/2}.
\end{split}
\end{align}
Finally, for $M_{5}(t)$, we obtain from \eqref{4-18}
\begin{align}\label{4-25}
\begin{split}
|M_{5}(t)|&\le\frac{|(c_{\alpha}^{+}+c_{\alpha}^{-})b\chi_{*}(0)|}{2}(1+t)^{-1/2}t^{-3/2}\int_{0}^{\infty}e^{-y^{2}/4t}y^{-(\alpha-2)}dy\\
&\le \frac{|(c_{\alpha}^{+}+c_{\alpha}^{-})b\chi_{*}(0)|}{2^{\alpha-1}}\Gamma\biggl(\frac{3-\alpha}{2}\biggl)t^{-1/2-\alpha/2}.
\end{split}
\end{align}
Therefore, combining \eqref{4-20} through \eqref{4-25}, we have
\begin{align*}
\|Z(\cdot, t)\|_{L^{\infty}}&\ge \frac{|\eta_{*}(0)|}{4\sqrt{\pi}}(|M_{1}(t)|-|M_{2}(t)|-|M_{3}(t)|-|M_{4}(t)|-|M_{5}(t)|) \\
&\ge  \frac{|\eta_{*}(0)|}{4\sqrt{\pi}}\biggl(2^{2-\alpha}|\beta_{0}|(1+t)^{-\alpha/2}-\frac{|(c_{\alpha}^{+}+c_{\alpha}^{-})b\chi_{*}(0)|}{2-\alpha}t^{-1}\\
&\ \ \ \ -\frac{(\alpha-1)|c_{\alpha}^{+}-c_{\alpha}^{-}|}{2^{\alpha-1}}\Gamma \biggl(1-\frac{\alpha}{2}\biggl)t^{-1/2-\alpha/2} \\
&\ \ \ \ -\frac{2^{2-\alpha}|(c_{\alpha}^{+}+c_{\alpha}^{-})b\chi_{*}(0)|}{2-\alpha}\Gamma \biggl(2-\frac{\alpha}{2}\biggl)t^{-1-\alpha/2}\\
&\ \ \ \ -\frac{|(c_{\alpha}^{+}+c_{\alpha}^{-})b\chi_{*}(0)|}{2^{\alpha-1}}\Gamma\biggl(\frac{3-\alpha}{2}\biggl)t^{-1/2-\alpha/2}\biggl).
\end{align*}
Hence, there is a positive constant $\nu_{0}$ such that \eqref{1-22} holds. This completes the proof of Theorem \ref{main2} for $1<\alpha<2$. 
\end{proof}

\section{Proof of Theorem \ref{main2} for $\alpha=2$}
In this last section, we shall completes the proof of Theorem \ref{main2}. Before proving \eqref{1-19} and \eqref{1-23}, we recall the following fact derived in~\cite{Fukuda}. We consider 
\begin{align}\label{5-1}
\begin{split}
v_{t}+(b\chi v)_{x}-v_{xx}&=-\biggl(\frac{c}{3}\chi^{3}\biggl)_{x}-k\chi_{xxx}, \ \ t>0, \ \ x\in \R, \\
v(x, 0)&=0, \ \ x\in \R.
\end{split}
\end{align}
The leading term of the solution $v(x, t)$ to \eqref{5-1} is given by $V(x, t)$ defined by \eqref{1-11}. Actually, we have the following asymptotic formula (for the proof, see Proposition 4.3 in~\cite{Fukuda}). 
\begin{prop}\label{prop5-1}
Assume that $|\delta| \le 1$. Then the estimate 
\begin{equation}\label{5-2}
\|v(\cdot, t)-V(\cdot, t)\|_{L^{\infty}} \le C|\delta|(1+t)^{-1}, \ t\ge1
\end{equation}
holds. Here $v(x, t)$ is the solution to \eqref{5-1} and $V(x, t)$ is defined by \eqref{1-11}.
\end{prop}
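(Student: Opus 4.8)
The plan is to feed \eqref{5-1} into the representation formula of Lemma~\ref{lem2-4} and then peel off the self-similar profile $V$ by a moment computation. Since the right-hand side of \eqref{5-1} is already a perfect $x$-derivative, I would set
\[
\lambda(x,t)\equiv-\frac{c}{3}\chi^{3}(x,t)-k\chi_{xx}(x,t),
\]
so that the forcing equals $\p_{x}\lambda$. As $v(x,0)=0$, Lemma~\ref{lem2-4} and \eqref{2-11} give
\[
v(x,t)=\int_{0}^{t}U[\p_{x}\lambda(\tau)](x,t,\tau)\,d\tau=\int_{0}^{t}\int_{\R}\p_{x}\bigl(G(x-y,t-\tau)\eta(x,t)\bigr)\eta(y,\tau)^{-1}\lambda(y,\tau)\,dy\,d\tau,
\]
where I have used $\int_{-\infty}^{y}\p_{\xi}\lambda(\xi,\tau)\,d\xi=\lambda(y,\tau)$, valid because $\chi$ and its derivatives decay at spatial infinity.

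The decisive step is to compute the mass of the rescaled source. Substituting $y=\sqrt{1+\tau}\,\zeta$ and using $\chi(y,\tau)=(1+\tau)^{-1/2}\chi_{*}(\zeta)$ together with $\eta(y,\tau)^{-1}=\eta_{*}(\zeta)^{-1}$, I obtain
\[
\int_{\R}\eta(y,\tau)^{-1}\lambda(y,\tau)\,dy=(1+\tau)^{-1}\Bigl(-\frac{c}{3}\int_{\R}\eta_{*}^{-1}\chi_{*}^{3}\,d\zeta-k\int_{\R}\eta_{*}^{-1}\chi_{*}''\,d\zeta\Bigr).
\]
Two integrations by parts, together with $(\eta_{*}^{-1})'=-\tfrac{b}{2}\eta_{*}^{-1}\chi_{*}$, collapse the second integral to $\tfrac{b^{2}}{8}\int_{\R}\eta_{*}^{-1}\chi_{*}^{3}\,d\zeta=\tfrac{b^{2}}{8}d$, with $d$ as in \eqref{1-13}, so that the total mass equals $-d\bigl(\tfrac{b^{2}k}{8}+\tfrac{c}{3}\bigr)(1+\tau)^{-1}$, precisely the constant occurring in \eqref{1-11}.

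To recover $V$ I would restrict to $\tau\in[0,t/2]$, replace $G(x-y,t-\tau)$ by $G(x,t)$ inside the $y$-integral, and pull the kernel out using the mass above and $\int_{0}^{t/2}(1+\tau)^{-1}\,d\tau=\log(1+t)+O(1)$. For large $t$ one has $G(x,t)\eta(x,t)\approx(4\pi(1+t))^{-1/2}e^{-s^{2}/4}\eta_{*}(s)$ with $s=x/\sqrt{1+t}$, whence by \eqref{1-12}
\[
\p_{x}\bigl(G(x,t)\eta(x,t)\bigr)\approx\frac{1}{4\sqrt{\pi}}(1+t)^{-1}V_{*}\Bigl(\frac{x}{\sqrt{1+t}}\Bigr);
\]
multiplying by the mass and the logarithm reproduces exactly $V(x,t)$ from \eqref{1-11}.

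The remaining, and most delicate, part is to show that every discarded piece is $O((1+t)^{-1})$, i.e. smaller than the main term by a factor $\log(1+t)$. I would split the kernel through the product rule $\p_{x}(G(x-y,t-\tau)\eta(x,t))=\sum_{n=0}^{1}\p_{x}^{1-n}\eta(x,t)\,\p_{x}^{n}G(x-y,t-\tau)$ as in Lemma~\ref{lem2-5}, and control three errors: (i) the first-moment remainder of the Taylor expansion $G(x-y)-G(x)$, where the extra factor $\int y\,\eta^{-1}\lambda\,dy=O((1+\tau)^{-1/2})$ is compensated by an additional $x$-derivative on $G$ giving $(t-\tau)^{-1}$; (ii) the replacement of $t-\tau$ by $t$ in the Gaussian, handled by $\p_{t}G$ via Lemma~\ref{lem2-2}; and (iii) the contribution of $\tau\in[t/2,t]$, where $\|\lambda(\tau)\|_{L^{\infty}}\le C(1+t)^{-3/2}$ and Lemma~\ref{lem2-5} yields an $O((1+t)^{-1})$ bound at once. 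The main obstacle is the bookkeeping in (i): the first moment contributes $(1+\tau)^{1/2}$ more than the zeroth moment, and one must verify that integrals such as $\int_{0}^{t/2}(t-\tau)^{-3/2}(1+\tau)^{-1/2}\,d\tau$ remain $O((1+t)^{-1})$ and do not generate a second logarithm, so that the single $\log(1+t)$ of $V$ is captured exactly. Lemmas~\ref{lem2-2} and~\ref{lem2-3} supply all the decay rates for $G$, $\chi$, $\eta$ and their derivatives used in these estimates.
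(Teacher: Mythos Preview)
The paper does not actually prove Proposition~\ref{prop5-1}; it simply refers the reader to Proposition~4.3 of the author's earlier work~\cite{Fukuda}. Your outline is precisely the argument that the machinery of the present paper---Lemma~\ref{lem2-4}, the operator $U$ in \eqref{2-11}, and the definitions \eqref{1-11}--\eqref{1-13}---is designed for, and it is almost certainly the same proof as in the cited reference: write $v$ via \eqref{2-12} with zero initial data, identify the zeroth moment $\int_{\R}\eta^{-1}\lambda\,dy=-d\bigl(\tfrac{b^{2}k}{8}+\tfrac{c}{3}\bigr)(1+\tau)^{-1}$, recognize $\p_{x}(G(x,1+t)\eta(x,t))=(4\sqrt{\pi})^{-1}(1+t)^{-1}V_{*}(x/\sqrt{1+t})$, and bound the three remainders by $(1+t)^{-1}$. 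Your moment computation and the identification of $V$ are correct.

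The only place where a little more care is needed is your error~(i). A first-moment Taylor step would require controlling a second-order remainder as well; it is cleaner to use the mean-value bound
\[
|\p_{x}^{n}G(x-y,t-\tau)-\p_{x}^{n}G(x,t-\tau)|\le|y|\,\|\p_{x}^{n+1}G(\cdot,t-\tau)\|_{L^{\infty}}
\]
together with the self-similar estimate $\int_{\R}|y|\,|\eta(y,\tau)^{-1}\lambda(y,\tau)|\,dy\le C|\delta|(1+\tau)^{-1/2}$. Summing over $n=0,1$ with the weights from Lemma~\ref{lem2-3} gives
\[
\sum_{n=0}^{1}(1+t)^{-1/2+n/2}\int_{0}^{t/2}(t-\tau)^{-1-n/2}(1+\tau)^{-1/2}\,d\tau\le C(1+t)^{-1},
\]
so no second logarithm appears, exactly as you anticipated. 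Errors~(ii) and~(iii) are handled as you describe via Lemma~\ref{lem2-2} and Lemma~\ref{lem2-5}.
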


\begin{proof}[\rm{\bf{End of the proof of Theorem \ref{main2} for $\alpha=2$}}]
First, we shall prove \eqref{1-19}. We set 
\begin{align*}
\phi(x, t)&\equiv u(x, t)-\chi(x, t)-v(x, t)\\
&=\psi(x, t)-v(x, t).
\end{align*}
Then, from \eqref{1-1}, \eqref{1-6}, \eqref{4-1} and \eqref{5-1}, $\phi(x, t)$ satisfies the following equation:
\begin{align}\label{5-3}
\begin{split}
\phi_{t}+(b\chi \phi)_{x}-\phi_{xx}&=-\biggl(\frac{b}{2}\psi^{2}\biggl)_{x}-\biggl(\frac{c}{3}\psi^{3}\biggl)_{x}-c(u\chi \psi)_{x}-k\psi_{xxx}, \ t>0, \ x\in \R, \\
\phi(x, 0)&=\psi_{0}(x)=u_{0}(x)-\chi_{*}(x), \ x\in \R.
\end{split}
\end{align}
From Lemma \ref{lem2-4}, we obtain 
\begin{align}\label{5-4}
\begin{split}
\phi(x, t)=&\ U[\psi_{0}](x, t, 0)-\frac{b}{2}\int_{0}^{t}U[\p_{x}\psi^{2}(\tau)](x, t, \tau)d\tau\\
&-\frac{c}{3}\int_{0}^{t}U[\p_{x}\psi^{3}(\tau)](x, t, \tau)d\tau-c\int_{0}^{t}U[\p_{x}(u\chi \psi)(\tau)](x, t, \tau)d\tau\\
&-k\int_{0}^{t}U[\p_{x}^{3}\psi(\tau)](x, t, \tau)d\tau\\
\equiv&\ U[\psi_{0}](x, t, 0)+K_{1}+K_{2}+K_{3}+K_{4}. 
\end{split}
\end{align}
Thus, we have 
\begin{align}\label{5-5}
\begin{split}
&\ u(x, t)-\chi(x, t)-Z(x, t)-V(x, t)\\
=&\ U[\psi_{0}](x, t, 0)-Z(x, t)+v(x, t)-V(x, t)+K_{1}+K_{2}+K_{3}+K_{4}, 
\end{split}
\end{align}
where $Z(x, t)$ and $V(x, t)$ are defined by \eqref{1-20} and \eqref{1-11}, respectively.

We evaluate $K_{1}$, $K_{2}$, $K_{3}$ and $K_{4}$. In the same way to get \eqref{4-9}, from Lemma \ref{lem2-5}, Theorem \ref{main1} and Proposition \ref{prop3-2} for $\alpha=2$, we obtain  
\begin{align}\label{5-6}
\begin{split}
&\|K_{1}(\cdot, t)\|_{L^{\infty}}\\
&\le C\sum_{n=0}^{1}(1+t)^{-1/2+n/2}\\
&\ \ \ \ \times \biggl(\int_{0}^{t/2}(t-\tau)^{-1/2-n/2}\| \psi^{2}(\cdot, \tau)\|_{L^{1}}d\tau+\int_{t/2}^{t}(t-\tau)^{-n/2}\| \psi^{2}(\cdot, \tau)\|_{L^{\infty}}d\tau \biggl) \\
&\le C\sum_{n=0}^{1}(1+t)^{-1/2+n/2}\biggl(\int_{0}^{t/2}(t-\tau)^{-1/2-n/2}(1+\tau)^{-3/2}\log(2+\tau)^{2}d\tau\\
&\ \ \ \ +\int_{t/2}^{t}(t-\tau)^{-n/2}(1+\tau)^{-2}\log(2+\tau)^{2}d\tau \biggl) \\
&\le C(1+t)^{-1}, \ t\ge1. 
\end{split}
\end{align}
We omit the evaluation of $K_{2}$, since we can easily evaluate it in the same way as the above calculation. Actually, it follows that 
\begin{equation}\label{5-7}
\|K_{2}(\cdot, t)\|_{L^{\infty}}\le C(1+t)^{-1}, \ \ge1.
\end{equation}

Next, we evaluate $K_{3}$. Using Lemma \ref{lem2-5}, the Schwarz inequality, \\Proposition \ref{GE}, Lemma \ref{2-2} and Theorem \ref{main1} for $\alpha=2$, we obtain 
\begin{align}\label{5-8}
\begin{split}
&\|K_{3}(\cdot, t)\|_{L^{\infty}}\\
&\le C\sum_{n=0}^{1}(1+t)^{-1/2+n/2}\biggl(\int_{0}^{t/2}(t-\tau)^{-1/2-n/2}\| (u\chi \psi)(\cdot, \tau)\|_{L^{1}}d\tau\\
&\ \ \ \ +\int_{t/2}^{t}(t-\tau)^{-n/2}\| (u\chi \psi)(\cdot, \tau)\|_{L^{\infty}}d\tau \biggl) \\
&\le C\sum_{n=0}^{1}(1+t)^{-1/2+n/2}\biggl(\int_{0}^{t/2}(t-\tau)^{-1/2-n/2}\|u(\cdot, t)\|_{L^{2}}\|\chi(\cdot, t)\|_{L^{2}}\|\psi(\cdot, \tau)\|_{L^{\infty}}d\tau\\
&\ \ \ \ +\int_{t/2}^{t}(t-\tau)^{-n/2}\|u(\cdot, t)\|_{L^{\infty}}\|\chi(\cdot, t)\|_{L^{\infty}}\|\psi(\cdot, \tau)\|_{L^{\infty}}d\tau \biggl) \\
&\le C\sum_{n=0}^{1}(1+t)^{-1/2+n/2}\biggl(\int_{0}^{t/2}(t-\tau)^{-1/2-n/2}(1+\tau)^{-3/2}\log(2+\tau)d\tau\\
&\ \ \ \ +\int_{t/2}^{t}(t-\tau)^{-n/2}(1+\tau)^{-2}\log(2+\tau)d\tau \biggl) \\
&\le C(1+t)^{-1}, \ t\ge1. 
\end{split}
\end{align}

Finally, we evaluate $K_{4}$. From the definition of $K_{4}$, it follows that 
\begin{align*}
\begin{split}
K_{4}=&-k\int_{0}^{t}\int_{\R}\p_{x}(G(x-y, t-\tau)\eta(x, t))(\eta(y, \tau))^{-1}\p_{y}^{2}\psi(y, \tau)dyd\tau\\
=&-k\sum_{n=0}^{1}\p_{x}^{1-n}\eta(x, t)g_{n}(x, t),
\end{split}
\end{align*}
where we put 
\begin{equation*}
g_{n}(x, t)\equiv \int_{0}^{t}\int_{\R}\p_{x}^{n}G(x-y, t-\tau)(\eta(y, \tau))^{-1}\p_{y}^{2}\psi(y, \tau)dyd\tau.
\end{equation*}
Then, we have from Lemma \ref{lem2-3}
\begin{align}\label{5-9}
\begin{split}
\|K_{4}(\cdot, t)\|_{L^{\infty}}&\le C\sum_{n=0}^{1}\|\p_{x}^{1-n}\eta(\cdot, t)\|_{L^{\infty}}\|g_{n}(\cdot, t)\|_{L^{\infty}}\\
&\le C\sum_{n=0}^{1}(1+t)^{-1/2+n/2}\|g_{n}(\cdot, t)\|_{L^{\infty}}.
\end{split}
\end{align}
\newpage 
\noindent 
By making the integration by parts for the first part of the $\tau$-integral of $g_{n}(x, t)$, we have
\begin{align*}
\begin{split}
g_{n}(x, t)=&\ \biggl(\int_{0}^{t/2}+\int_{t/2}^{t}\biggl)\int_{\R}\p_{x}^{n}G(x-y, t-\tau)(\eta(y, \tau))^{-1}\p_{y}^{2}\psi(y, \tau)dyd\tau\\
=&\int_{0}^{t/2}\int_{\R}\p_{x}^{n+2}G(x-y, t-\tau)(\eta(y, \tau))^{-1}\psi(y, \tau)dyd\tau\\
&-2\int_{0}^{t/2}\int_{\R}\p_{x}^{n+1}G(x-y, t-\tau)\p_{y}(\eta(y, \tau))^{-1}\psi(y, \tau)dyd\tau\\
&+\int_{0}^{t}\int_{\R}\p_{x}^{n}G(x-y, t-\tau)\p_{y}^{2}(\eta(y, \tau))^{-1}\psi(y, \tau)dyd\tau \\
&+\int_{t/2}^{t}\int_{\R}\p_{x}^{n}G(x-y, t-\tau)(\eta(y, \tau))^{-1}\p_{y}^{2}\psi(y, \tau)dyd\tau.
\end{split}
\end{align*}
Therefore, from Young's inequality, the Schwarz inequality, Proposition \ref{GE}, Lemma \ref{lem2-2}, Lemma \ref{lem2-3} and Proposition \ref{prop3-1}, we have
\begin{align}\label{5-10}
\begin{split}
\|g_{n}(\cdot, t)\|_{L^{\infty}}\le&\ C\int_{0}^{t/2}\|\p_{x}^{n+2}G(\cdot, t-\tau)\|_{L^{\infty}}(\|u(\cdot, \tau)\|_{L^{1}}+\|\chi(\cdot, t)\|_{L^{1}})d\tau \\
&+C\int_{0}^{t/2}\|\p_{x}^{n+1}G(\cdot, t-\tau)\|_{L^{\infty}}\|\p_{x}(\eta(\cdot, \tau)^{-1})\|_{L^{2}}\|\psi(\cdot, \tau)\|_{L^{2}}d\tau\\
&+C\int_{0}^{t/2}\|\p_{x}^{n}G(\cdot, t-\tau)\|_{L^{\infty}}\|\p_{x}^{2}(\eta(\cdot, \tau)^{-1})\|_{L^{2}}\|\psi(\cdot, \tau)\|_{L^{2}}d\tau\\
&+C\int_{t/2}^{t}\|\p_{x}^{n}G(\cdot, t-\tau)\|_{L^{1}}(\|\p_{x}^{2}u(\cdot, \tau)\|_{L^{\infty}}+\|\p_{x}^{2}\chi(\cdot, \tau)\|_{L^{\infty}})d\tau\\
\le&\ C\int_{0}^{t/2}(t-\tau)^{-3/2-n/2}(1+\tau^{-1/4})d\tau\\
&+C\int_{0}^{t/2}(t-\tau)^{-1-n/2}(1+\tau)^{-1}\log(2+\tau)d\tau\\
&+C\int_{0}^{t/2}(t-\tau)^{-1/2-n/2}(1+\tau)^{-3/2}\log(2+\tau)d\tau\\
&+C\int_{t/2}^{t}(t-\tau)^{-n/2}(1+\tau)^{-3/2}d\tau\\
\le&\ C(1+t)^{-1/2-n/2}, \ t\ge1.
\end{split}
\end{align}
Combining \eqref{5-9} and \eqref{5-10}, we get 
\begin{equation}\label{5-11}
\|K_{4}(\cdot, t)\|_{L^{\infty}}\le C(1+t)^{-1}, \ \ge1.
\end{equation}

Summing up \eqref{5-2}, \eqref{5-6}, \eqref{5-7}, \eqref{5-8} and \eqref{5-11}, we have
\begin{align*}
&\|u(\cdot, t)-\chi(\cdot, t)-Z(\cdot, t)-V(\cdot, t)\|_{L^{\infty}}\\
&\le \|U[\psi_{0}](\cdot, t, 0)-Z(\cdot, t)\|_{L^{\infty}}+C(1+t)^{-1}, \ \ t\ge1.
\end{align*}
Therefore, from \eqref{4-5}, we obtain 
\begin{equation*}
\limsup_{t\to \infty}\frac{(1+t)}{\log(1+t)}\|u(\cdot, t)-\chi(\cdot, t)-Z(\cdot, t)-V(\cdot, t)\|_{L^{\infty}}=0.
\end{equation*}
Thus, we completes the proof of \eqref{1-19}. 
\newpage

Finally, we shall derive the lower bound estimate of $Z(x, t)+V(x, t)$, that is \eqref{1-23}. First, we take $x=0$ in \eqref{1-11}. Then, from \eqref{1-12}, it follows that 
\begin{equation}\label{5-12}
V(0, t)=-\frac{bd}{4\sqrt{\pi}}\biggl(\frac{b^{2}k}{8}+\frac{c}{3}\biggl)\chi_{*}(0)\eta_{*}(0)(1+t)^{-1}\log(1+t).
\end{equation}
Combining \eqref{4-15} and \eqref{5-12}, we have from the triangle inequality and \eqref{1-21}
\begin{align}\label{5-13}
\begin{split}
&\|Z(\cdot, t)+V(\cdot, t)\|_{L^{\infty}}\\
&\ge |Z(0, t)+V(0, t)|\\
&=\biggl|\frac{(c_{2}^{+}-c_{2}^{-})\eta_{*}(0)}{4\sqrt{\pi}}t^{-3/2}\int_{0}^{\infty}e^{-y^{2}/4t}y(1+y)^{-1}dy\\
&\ \ \ \ +\frac{(c_{2}^{+}+c_{2}^{-})b\chi_{*}(0)\eta_{*}(0)}{4\sqrt{\pi}}(1+t)^{-1/2}t^{-1/2}\int_{0}^{\infty}e^{-y^{2}/4t}(1+y)^{-1}dy\\
&\ \ \ \ -\frac{bd}{4\sqrt{\pi}}\biggl(\frac{b^{2}k}{8}+\frac{c}{3}\biggl)\chi_{*}(0)\eta_{*}(0)(1+t)^{-1}\log(1+t)\biggl|\\
&\ge \frac{|b|}{4\sqrt{\pi}}|\chi_{*}(0)||\eta_{*}(0)|(1+t)^{-1/2}\bigg|(c_{2}^{+}+c_{2}^{-})t^{-1/2}\int_{0}^{\infty}e^{-y^{2}/4t}(1+y)^{-1}dy\\
&\ \ \ \ -d\biggl(\frac{b^{2}k}{8}+\frac{c}{3}\biggl)(1+t)^{-1/2}\log(1+t)\biggl|-\frac{|(c_{2}^{+}-c_{2}^{-})\eta_{*}(0)|}{4\sqrt{\pi}}t^{-3/2}\int_{0}^{\infty}e^{-y^{2}/4t}dy\\
&\equiv \frac{|b|}{4\sqrt{\pi}}|\chi_{*}(0)||\eta_{*}(0)|(1+t)^{-1/2}W(t)-\frac{|(c_{2}^{+}-c_{2}^{-})\eta_{*}(0)|}{4}t^{-1}.\\
\end{split}
\end{align}
Now, we evaluate $W(t)$ from below. Splitting the $y$-integral and using the triangle inequality, we obtain 
\begin{align}\label{5-14}
\begin{split}
W(t)=&\ \biggl|(c_{2}^{+}+c_{2}^{-})t^{-1/2}\biggl(\int_{0}^{\sqrt{1+t}-1}+\int_{\sqrt{1+t}-1}^{\infty}\biggl)e^{-y^{2}/4t}(1+y)^{-1}dy\\
&\ -d\biggl(\frac{b^{2}k}{8}+\frac{c}{3}\biggl)(1+t)^{-1/2}\log(1+t)\biggl|\\
\ge&\ \biggl|(c_{2}^{+}+c_{2}^{-})t^{-1/2}\int_{0}^{\sqrt{1+t}-1}e^{-y^{2}/4t}(1+y)^{-1}dy\\
&\ -d\biggl(\frac{b^{2}k}{8}+\frac{c}{3}\biggl)(1+t)^{-1/2}\log(1+t)\biggl|\\
&\ -\biggl|(c_{2}^{+}+c_{2}^{-})t^{-1/2}\int_{\sqrt{1+t}-1}^{\infty}e^{-y^{2}/4t}(1+y)^{-1}dy\biggl|\\
\equiv&\ W_{1}(t)-W_{2}(t).
\end{split}
\end{align}
From the mean value theorem, there exists $\theta \in(0, 1)$ such that 
\begin{equation*}
e^{-y^{2}/4t}=1-\frac{y^{2}}{4t}e^{-\theta y^{2}/4t}. 
\end{equation*}
Therefore, we obtain 
\begin{align}\label{5-15}
\begin{split}
W_{1}(t)=&\ \biggl|(c_{2}^{+}+c_{2}^{-})t^{-1/2}\int_{0}^{\sqrt{1+t}-1}\biggl(1-\frac{y^{2}}{4t}e^{-\theta y^{2}/4t}\biggl)(1+y)^{-1}dy\\
&\ -d\biggl(\frac{b^{2}k}{8}+\frac{c}{3}\biggl)(1+t)^{-1/2}\log(1+t)\biggl|\\
\ge&\ \biggl|(c_{2}^{+}+c_{2}^{-})t^{-1/2}\int_{0}^{\sqrt{1+t}-1}(1+y)^{-1}dy\\
&\ -d\biggl(\frac{b^{2}k}{8}+\frac{c}{3}\biggl)(1+t)^{-1/2}\log(1+t)\biggl|\\
&\ -\biggl|\frac{c_{2}^{+}+c_{2}^{-}}{4}t^{-3/2}\int_{0}^{\sqrt{1+t}-1}e^{-\theta y^{2}/4t}y^{2}(1+y)^{-1}dy\biggl|\\
\equiv&\ W_{1.1}(t)-W_{1.2}(t).
\end{split}
\end{align}
For $W_{1.1}(t)$, we have
\begin{align}\label{5-16}
\begin{split}
W_{1.1}(t)=&\ \biggl|\frac{c_{2}^{+}+c_{2}^{-}}{2}t^{-1/2}\log(1+t)-d\biggl(\frac{b^{2}k}{8}+\frac{c}{3}\biggl)(1+t)^{-1/2}\log(1+t)\biggl|\\
=&\ \biggl|\frac{c_{2}^{+}+c_{2}^{-}}{2}t^{-1/2}\log(1+t)-\frac{c_{2}^{+}+c_{2}^{-}}{2}(1+t)^{-1/2}\log(1+t)\\
&\ +\biggl(\frac{c_{2}^{+}+c_{2}^{-}}{2}-d\biggl(\frac{b^{2}k}{8}+\frac{c}{3}\biggl)\biggl)(1+t)^{-1/2}\log(1+t)\biggl| \\
\ge&\ |\beta_{1}|(1+t)^{-1/2}\log(1+t)-\frac{|c_{2}^{+}+c_{2}^{-}|}{2}\log(1+t)(t^{-1/2}-(1+t)^{-1/2})\\
\ge&\ |\beta_{1}|(1+t)^{-1/2}\log(1+t)-\frac{|c_{2}^{+}+c_{2}^{-}|}{2}t^{-3/2}\log(1+t),
\end{split}
\end{align}
where $\beta_{1}$ is defined by \eqref{1-24}. On the other hand, for $W_{1.2}(t)$, we obtain 
\begin{align}\label{5-17}
\begin{split}
W_{1.2}(t)&\le \frac{|c_{2}^{+}+c_{2}^{-}|}{4}t^{-3/2}\int_{0}^{\sqrt{1+t}-1}y\ dy\\
&=\frac{|c_{2}^{+}+c_{2}^{-}|}{8}t^{-3/2}(\sqrt{1+t}-1)^{2}\le \frac{|c_{2}^{+}+c_{2}^{-}|}{8}t^{-1/2}. 
\end{split}
\end{align}
Analogously, for $W_{2}(t)$, it follows that 
\begin{align}\label{5-18}
\begin{split}
W_{2}(t)&\le|c_{2}^{+}+c_{2}^{-}|t^{-1/2}\biggl(\sup_{y\ge \sqrt{1+t}-1}(1+|y|)^{-1}\biggl)\int_{y\ge \sqrt{1+t}-1}e^{-y^{2}/4t}dy\\
&\le \sqrt{\pi}|c_{2}^{+}+c_{2}^{-}|(1+t)^{-1/2}.
\end{split}
\end{align}
Therefore, summing up \eqref{5-13} through \eqref{5-18}, we get 
\begin{align*}
&\|Z(\cdot, t)+V(\cdot, t)\|_{L^{\infty}}\\
&\ge \frac{|b|}{4\sqrt{\pi}}|\chi_{*}(0)||\eta_{*}(0)|\biggl(|\beta_{1}|(1+t)^{-1}\log(1+t)-\frac{|c_{2}^{+}+c_{2}^{-}|}{2}t^{-2}\log(1+t)\\
&\ \ \ \ -|c_{2}^{+}+c_{2}^{-}|\biggl(\sqrt{\pi}+\frac{1}{8}\biggl)t^{-1}\biggl)-\frac{|(c_{2}^{+}-c_{2}^{-})\eta_{*}(0)|}{4}t^{-1}.
\end{align*}
Hence, there is a positive constant $\nu_{1}$ such that \eqref{1-23} holds. This completes the proof of Theorem \ref{main2} for $\alpha=2$. 
\end{proof}

\vskip10pt
\par\noindent
\textbf{Acknowledgments} 

The author would like to express his sincere gratitude to Professor Hideo Kubo for his feedback and valuable advices. 

This study is partially supported by Grant-in-Aid for JSPS Research Fellow \\No.18J1234008 and MEXT through Program for Leading Graduate Schools (Hokkaido University ``Ambitious Leader's Program").


\end{document}